\numberwithin{equation}{section}
\newcommand{\Fg}{\mathfrak{g}}
\newcommand{\Fh}{\mathfrak{h}}
\newcommand{\BC}{\mathbb{C}}
\newcommand{\BR}{\mathbb{R}}
\newcommand{\BZ}{\mathbb{Z}}
\newcommand{\BB}{\mathbb{B}}
\newcommand{\BA}{\mathbb{A}}
\newcommand{\CL}{\mathcal{L}}
\newcommand{\CB}{\mathcal{B}}
\newcommand{\CC}{\mathcal{C}}
\newcommand{\CT}{\mathcal{T}}
\newcommand{\Hom}{\mathop{\rm Hom}\nolimits}
\newcommand{\wt}{\mathop{\rm wt}\nolimits}
\newcommand{\rank}{\mathop{\rm rank}\nolimits}
\newcommand{\dist}{\mathop{\rm dist}\nolimits}
\newcommand{\ch}{\mathop{\rm ch}\nolimits}
\newcommand{\ve}{\varepsilon}
\newcommand{\vp}{\varphi}
\newcommand{\bzero}{\mathbf{0}}
\newcommand{\q}{\mathsf{v}}
\newcommand{\pair}[2]{\langle #1,\,#2 \rangle}
\newcommand{\ud}[1]{\underline{#1}}
\newcommand{\ti}[1]{\tilde{#1}}
\newcommand{\wti}[1]{\widetilde{#1}}
\theoremstyle{plain}
\newtheorem{lem}{Lemma}[section]
\newtheorem{prop}[lem]{Proposition}
\newtheorem{thm}[lem]{Theorem}
\newtheorem{cor}[lem]{Corollary}
\newtheorem{ithm}{Theorem}
\newtheorem{icor}[ithm]{Corollary}
\theoremstyle{definition}
\newtheorem{dfn}[lem]{Definition}
\theoremstyle{remark}
\newtheorem{ex}[lem]{Example}
\newtheorem{rem}[lem]{Remark}
\newenvironment{enu}{%
 \begin{enumerate}%
}{\end{enumerate}}
\begin{document}

\baselineskip=17pt

\title{\Large\bf 
Path model for an extremal weight module \\
over the quantized hyperbolic Kac-Moody algebra \\
of rank 2%
\footnote{2010 Mathematics Subject Classification: 17B37, 17B67, 81R50.}%
}
\author{
Daisuke Sagaki \\ 
 \small Institute of Mathematics, University of Tsukuba, \\
 \small 1-1-1 Tennodai, Tsukuba, Ibaraki 305-8571, Japan \\ 
 \small (e-mail: {\tt sagaki@math.tsukuba.ac.jp})\\[3mm]
and \\[3mm]
Dongxiao Yu \\ 
 \small Graduate School of Pure and Applied Sciences, University of Tsukuba, \\
 \small 1-1-1 Tennodai, Tsukuba, Ibaraki 305-8571, Japan \\
 \small(e-mail: {\tt yudongxiao@math.tsukuba.ac.jp})
}

\date{}

\maketitle

%
\begin{abstract}
Let $\Fg$ be a hyperbolic Kac-Moody algebra of rank 2, 
and set $\lambda=\Lambda_{1} - \Lambda_{2}$, 
where $\Lambda_{1}$, $\Lambda_{2}$ are the fundamental weights. 
Denote by $V(\lambda)$ the extremal weight module of 
extremal weight $\lambda$ with $v_\lambda$ the extremal weight vector, 
and by $\CB(\lambda)$ the crystal basis of $V(\lambda)$ 
with $u_\lambda$ the element corresponding to $v_\lambda$. 
We prove that (i) $\CB(\lambda)$ is connected, 
(ii) the subset $\CB(\lambda)_{\mu}$ of elements of weight $\mu$ in $\CB(\lambda)$ 
is a finite set for every integral weight $\mu$, and $\CB(\lambda)_{\lambda} = \{u_\lambda\}$, 
(iii) every extremal element in $\CB(\lambda)$ is contained in the Weyl group orbit of $u_\lambda$,
(iv) $V(\lambda)$ is irreducible. 
Finally, we prove that the crystal basis $\CB(\lambda)$ is 
isomorphic, as a crystal, to the crystal $\BB(\lambda)$ of 
Lakshmibai-Seshadri paths of shape $\lambda$.
\end{abstract}
%
%
\section{Introduction.} 
\label{sec:intro}
In this paper, we study the structure of the extremal weight module $V(\lambda)$ of
extremal weight $\lambda := \Lambda_{1}-\Lambda_{2}$ over the quantized 
universal enveloping algebra associated to a hyperbolic Kac-Moody algebra of 
rank $2$, where $\Lambda_{1}$, $\Lambda_{2}$ are the fundamental weights, 
and then prove that the crystal basis $\CB(\lambda)$ of $V(\lambda)$ 
is isomorphic, as a crystal, to the crystal of 
Lakshmibai-Seshadri (LS for short) paths of shape $\lambda$.  

Let us explain the background and motivation of this paper. 
Let $\Fg$ be a symmetrizable Kac-Moody algebra over $\BC$ 
with $P$ the integral weight lattice, and 
$U_{\q}(\Fg)$ the quantized universal enveloping algebra 
over $\BC(\q)$ associated to $\Fg$.
The extremal weight module $V(\mu)$ of extremal weight $\mu \in P$ 
is the integrable $U_{\q}(\Fg)$-module 
generated by a single element $v_\mu$ with the defining relation 
that $v_\mu$ is an extremal weight vector of weight $\mu$. 
This module was introduced by Kashiwara \cite[Proposition 8.2.2]{o4} 
as a natural generalization of integrable highest (or lowest) weight modules; 
he also proved that $V(\mu)$ has a crystal basis $\CB(\mu)$. 
We know from \cite[Proposition~8.2.2 (iv) and (v)]{o4} that 
$V(\mu) \cong V(w\mu)$ as $U_{\q}(\Fg)$-modules, 
and $\CB(\mu) \cong \CB(w\mu)$ as crystals for all $\mu \in P$ 
and $w \in W$, where $W$ is the Weyl group of $\Fg$.
Also, we know from the comment at the end of \cite[\S8.2]{o4} 
that if $\mu \in P$ is dominant (resp., antidominant), 
then $V(\mu)$ is isomorphic to the  integrable highest (resp., lowest) weight module of 
highest (resp., lowest) weight $\mu$, 
and $\CB(\mu)$ is isomorphic to its crystal basis. 
So, we are interested in those $\mu \in P$ such that
\begin{equation} \label{20171130(1)}
\begin{split}
& \text{any element of $W\mu$ is neither dominant nor antidominant} \\
& \text{(or, no element of $W\mu$ is either dominant or antidominant)}. 
\end{split}
\end{equation} 
If $\Fg$ is of finite type, then there is no $\mu \in P$ satisfying 
the condition \eqref{20171130(1)}; it is well-known that $W\mu$ contains 
a (unique) dominant integral weight for every $\mu \in P$.
Assume that $\Fg$ is of affine type, and let $c$ be 
the canonical central element of $\Fg$. Then, $\mu \in P$ satisfies 
the condition \eqref{20171130(1)} if and only if $\mu$ is level-zero, 
that is, ($\mu \neq 0$, and) $\pair{\mu}{c} = 0$. 
In \cite{o5} and \cite{BN}, they deeply studied the basic structure of 
$V(\mu)$ and $\CB(\mu)$ for level-zero $\mu \in P$. 
Using results in \cite{o5} and \cite{BN}, Naito and Sagaki 
proved in \cite{o7} and \cite{o11} that if $\mu$ is 
a positive integer multiple of 
a level-zero fundamental weight, then 
the crystal basis $\CB(\mu)$ is isomorphic to
the crystal of LS paths of shape $\mu$; 
for the details on LS paths, see \S\ref{subsec:LS} below. 
After that, in \cite{o15}, they introduced semi-infinite LS paths 
in terms of the semi-infinite Bruhat order on the affine Weyl group, 
and proved that for level-zero dominant $\mu \in P$, 
the crystal basis $\CB(\mu)$ is 
isomorphic, as a crystal, to the crystal of semi-infinite LS paths of shape $\mu$. 
Thus we have already finished the basic study of the structure of 
$V(\mu)$ and $\CB(\mu)$, and given a path model for $\CB(\mu)$ 
in the finite and affine cases. 

In this paper, we consider the case where $\Fg = \Fg(A)$ is 
a hyperbolic Kac-Moody algebra of rank $2$ with Cartan matrix 
\begin{equation*}
A = \begin{pmatrix}
 2 & -a_1 \\
-a_2 & 2
\end{pmatrix},
\qquad \text{where $a_1,\,a_2 \in \BZ_{>0}$ with $a_1a_2 >4$}.
\end{equation*}
In \cite[Proposition~3.1.1]{o1}, it was proved that
\begin{equation*}
\lambda = \Lambda_1 - \Lambda_2
\end{equation*}
satisfies the condition \eqref{20171130(1)} if $a_1, a_2 \neq 1$. 
We prove the following theorems and corollary.

\begin{ithm}[Theorem~\ref{n7}] \label{ithm1}
The crystal graph of $\CB(\lambda)$ is connected.
\end{ithm}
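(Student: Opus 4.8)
The plan is to deduce the connectedness of the crystal graph from the fact that $V(\lambda)$ is generated as a $U_{\q}(\Fg)$-module by the single vector $v_\lambda$. Concretely, I would let $C$ denote the connected component of the crystal graph of $\CB(\lambda)$ that contains $u_\lambda$, and aim to prove $C=\CB(\lambda)$ by transporting the connectedness question back to the level of the module.

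First I would invoke the lower global basis $\{G(b):b\in\CB(\lambda)\}$ of $V(\lambda)$ (Kashiwara \cite{o4}), normalized so that $G(u_\lambda)=v_\lambda$. The key structural input is the compatibility of this global basis with each rank-one subalgebra $U_{\q,i}\cong U_{\q}(\mathfrak{sl}_2)$: for every $b$ and every $i$, both $\tilde{e}_i G(b)$ and $\tilde{f}_i G(b)$ are $\BC(\q)$-linear combinations of $G(b')$ with $b'$ ranging only over the $i$-string through $b$. Granting this, set $V_C:=\sum_{b\in C}\BC(\q)\,G(b)$. Since the whole $i$-string through any element of $C$ lies in $C$ (crystal edges stay within a component), the subspace $V_C$ is stable under all $e_i$, $f_i$, and — being spanned by weight vectors — under the Cartan part; hence $V_C$ is a $U_{\q}(\Fg)$-submodule. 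As $v_\lambda=G(u_\lambda)\in V_C$ and $V(\lambda)=U_{\q}(\Fg)\,v_\lambda$, we get $V_C=V(\lambda)$, forcing $C=\CB(\lambda)$.

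To keep the argument independent of the global basis machinery, I would alternatively split the claim into two purely crystal-theoretic steps. (A) Every extremal element $S_w u_\lambda$ ($w\in W$), i.e.\ the element of weight $w\lambda$, lies in $C$: since $W$ is the infinite dihedral group generated by $s_1,s_2$ and $S_{s_i w}u_\lambda=\tilde{f}_i^{\,n}S_w u_\lambda$ or $\tilde{e}_i^{\,n}S_w u_\lambda$ with $n=|\pair{w\lambda}{\alpha_i^\vee}|$, adjacent Weyl-orbit points are joined by an $i$-string, so induction on the length of $w$ places all of $Wu_\lambda$ in $C$. (B) Every $b\in\CB(\lambda)$ is connected to some $S_w u_\lambda$: for this one pushes $b$ toward the boundary of the weight diagram by raising/lowering operators, using the rank-$2$ geometry (the weights of $V(\lambda)$ fill a cone-like region whose extreme points are exactly $W\lambda$) together with the finiteness of each weight space.

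The main obstacle is step (B) of the elementary route — equivalently, the verification that the global basis expansions remain within a single $i$-string. Unlike the highest-weight case, $\CB(\lambda)$ has neither a highest nor a lowest element, which is precisely the content of condition \eqref{20171130(1)}; thus there is no grading on which to induct and no a priori reason that a reduction procedure terminates at an extremal element rather than cycling indefinitely. This is exactly where the hyperbolic rank-$2$ hypotheses $a_1a_2>4$ and $a_1,a_2\neq 1$ should enter: they pin down the shape of the weight diagram and the $i$-string lengths sharply enough to guarantee that the reduction in (B) strictly decreases a suitable distance-to-$W\lambda$ functional and hence terminates. I therefore expect the bulk of the work to lie in making this geometric reduction quantitative.
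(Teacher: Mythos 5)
There is a genuine gap in both routes you propose. Your first route hinges on the claim that $E_iG(b)$ and $F_iG(b)$ expand in global basis elements $G(b')$ with $b'$ ranging \emph{only over the $i$-string through $b$}. This is false: Kashiwara's expansion $F_iG(b)=[1+\ve_i(b)]_iG(\ti{f}_ib)+\sum_{b'}F^i_{b,b'}G(b')$ allows $b'$ outside the $i$-string of $b$ (only subject to a condition on $\ve_i(b')$ and the weight). If your claim were true, your argument would show that the crystal basis of \emph{every} cyclic module with a global basis is connected; but this already fails for extremal weight modules in affine type (e.g.\ $\CB(m\varpi_i)$ with $m\ge 2$ is disconnected although $V(m\varpi_i)$ is generated by $v_{m\varpi_i}$). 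So the submodule $V_C$ you build is not obviously stable under $E_i,F_i$, and the route collapses. Your second route is sound in step (A) but leaves step (B) — that every $b\in\CB(\lambda)$ is connected to some $S_wu_\lambda$ — entirely unproved, and the ingredient you propose to use there (finiteness of the weight spaces $\CB(\lambda)_\mu$) is Corollary~\ref{n14} of the paper, which is \emph{deduced from} connectedness; invoking it would be circular.

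The paper's actual mechanism is different and is worth noting: it works inside the realization $\CB(\lambda)\subset\CB(\infty)\otimes\CT_\lambda\otimes\CB(-\infty)$ and uses the $\ast$-operation together with extremality of $b^\ast$ to prove a ``purity'' statement (Proposition~\ref{n6}): every element of $\CB(\lambda)$ has the form $b_1\otimes t_\lambda\otimes u_{-\infty}$ or $u_\infty\otimes t_\lambda\otimes b_2$. This is exactly what supplies the missing grading — one inducts on $|\wt(b_1)|$ (resp.\ $|\wt(b_2)|$), applying $\ti{e}_i$ (resp.\ $\ti{f}_i$) to strictly decrease it, so the reduction terminates at $u_\lambda$ without any finiteness of weight spaces being assumed. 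The hyperbolic hypothesis $a_1,a_2\ge 2$ enters precisely in Lemma~\ref{n4}, via the inequality $\pair{-r_1\lambda}{\alpha_2^{\vee}}=-a_2+1\le 0$, to rule out the Kashiwara operator acting on the $\CB(-\infty)$ factor. If you want to salvage your approach, you would need to replace your $i$-string claim or your unproved step (B) with an argument of this kind.
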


\begin{icor}[Corollary~\ref{n14}] \label{icor2}
For every $\mu \in P$, the subset $\CB(\lambda)_{\mu}$ of 
elements of weight $\mu$ in $\CB(\lambda)$ is a finite set. 
In particular, $\CB(\lambda)_{\lambda} = \{u_{\lambda}\}$, 
where $u_\lambda$ is the element of $\CB(\lambda)$ corresponding to 
the extremal weight vector $v_\lambda \in V(\lambda)$.
\end{icor}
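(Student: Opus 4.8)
The plan is to establish the finiteness of the weight multiplicities first, and then extract the two displayed assertions. At the outset I would record three reductions. Since $\CB(\lambda)$ is the crystal basis of $V(\lambda)$, one has $\#\CB(\lambda)_\mu = \dim_{\BC(\q)} V(\lambda)_\mu$, so finiteness of $\CB(\lambda)_\mu$ is finiteness of a weight multiplicity. Next, because $\CB(\lambda)$ is a normal crystal, Kashiwara's operators $S_i$ (given by $\tilde{f}_i^{\pair{\wt b}{h_i}}$ when $\pair{\wt b}{h_i}\ge 0$ and by $\tilde{e}_i^{-\pair{\wt b}{h_i}}$ otherwise) satisfy the braid relations and define an action of $W$ on the set $\CB(\lambda)$ with $\wt(S_i b)=s_i\wt(b)$; hence $\#\CB(\lambda)_{w\mu}=\#\CB(\lambda)_{\mu}$ for all $w\in W$ and all $\mu$, a symmetry I would use to move $\mu$ into a convenient position. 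Finally, since $V(\lambda)$ is integrable its weights lie in $(\lambda+Q)\cap \mathrm{conv}(W\lambda)$, where $Q=\BZ\alpha_1+\BZ\alpha_2$; and because $\pair{\lambda}{h_1}=1$, $\pair{\lambda}{h_2}=-1$, the weight $\lambda$ is regular, so the points of $W\lambda$ are pairwise distinct and are exactly the vertices of $\mathrm{conv}(W\lambda)$.

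With these in hand, the engine of the proof is an explicit description of the rank-$2$ crystal graph, which by Theorem~\ref{n7} is connected and hence rooted at $u_\lambda$. First I would pin down the bi-infinite ``ladder'' of extremal elements $\{u_{w\lambda}\}_{w\in W}$: between consecutive extremal vertices there is a single $i$-string whose length is governed by $\pair{w\lambda}{h_i}$, and the quantities $|\pair{w\lambda}{h_i}|$ grow without bound along each of the two rays of $\mathrm{conv}(W\lambda)$. A direct computation shows that the graph genuinely branches away from this ladder: the element $\tilde{f}_2 u_{s_1\lambda}$ is nonzero (since $\pair{s_1\lambda}{h_2}=a_2-1\ge 1$), it is a weight vector $b'$ with $\pair{\wt b'}{h_1}=a_1-1\ge 1$, and therefore $\tilde{f}_1 b'\neq 0$. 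Thus $\CB(\lambda)$ is a genuinely two-dimensional graph rather than a line. The aim is to parametrize every vertex by its position on the ladder together with the sequence of lowering operators producing it, and then read off, weight by weight, that each $\mu$ is attained only finitely often.

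The hard part will be exactly this last point, and convexity alone does not settle it: the upward ray of $\mathrm{conv}(W\lambda)$ points into the cone $\BR_{\ge 0}\alpha_1+\BR_{\ge 0}\alpha_2$, so the set of weights $\nu$ lying both above $\mu$ (that is, $\nu-\mu\in Q_+:=\BZ_{\ge0}\alpha_1+\BZ_{\ge0}\alpha_2$) and inside the polytope is infinite; a priori an element of weight $\mu$ could descend from arbitrarily high extremal vertices. What must be shown is that, despite infinitely many candidate ancestors, only finitely many lowering chains actually terminate at weight $\mu$. Here I expect the indefiniteness of $A$ (namely $\det A=4-a_1a_2<0$) to be decisive: because the two rays open into a proper cone with no null direction, a branch dropped from far up one ray, controlled by the growing quantities $\pair{w\lambda}{h_i}$, is forced across $\mu$'s chamber and cannot land back at weight $\mu$. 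I would make this quantitative by bounding, in terms of $\pair{\mu}{h_1}$ and $\pair{\mu}{h_2}$, how high on the ladder the ancestors of a weight-$\mu$ element can sit, and then organize the count as an induction on the number of lowering steps, the local finiteness of the graph (at most four neighbours per vertex) doing the rest. This bounding of the contributing region is the crux.

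For the remaining assertion $\CB(\lambda)_\lambda=\{u_\lambda\}$, I note first that it is not automatic from the presentation of $V(\lambda)$ --- in the affine level-zero case the analogous multiplicity is even infinite --- so it must be deduced from the structure above. From the explicit graph, the only elements whose weight is a vertex of $\mathrm{conv}(W\lambda)$ are the extremal ladder elements $u_{w\lambda}$: every other element is produced by lowering from the ladder, so its weight has the form $w\lambda-\beta$ with $\beta\in Q_+\setminus\{0\}$ and therefore lies on an edge or in the interior of $\mathrm{conv}(W\lambda)$, never at a vertex. Since $\lambda$ is regular it is a vertex, and $u_\lambda$ is the unique ladder element sitting there; hence $\CB(\lambda)_\lambda=\{u_\lambda\}$, which completes the corollary.
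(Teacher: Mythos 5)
There is a genuine gap. Your argument reduces everything to the claim that only finitely many lowering chains terminate at a given weight $\mu$, and you explicitly flag this as ``the crux'' --- but you never prove it; you only sketch an intention to bound, via the indefiniteness of $A$, how high on the extremal ladder the ancestors of a weight-$\mu$ element can sit. As written, that bounding argument does not exist, and local finiteness of the graph plus connectedness is not enough: a connected, locally finite crystal can perfectly well have infinite weight multiplicities (the level-zero affine case you yourself cite is an example). The same unproven structural claim underlies your argument for $\CB(\lambda)_\lambda=\{u_\lambda\}$, where you assert that every non-extremal element has weight of the form $w\lambda-\beta$ with $\beta\in Q_+\setminus\{0\}$ and hence is not a vertex of $\mathrm{conv}(W\lambda)$; this is exactly the kind of statement that needs the missing control over which chains reach which weights.

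The missing idea is that Theorem~\ref{n7} says much more than that the crystal graph is connected: it says every $b\in\CB(\lambda)$ is of the form $\ti{f}_{i_k}\cdots\ti{f}_{i_1}u_\lambda$ or $\ti{e}_{i_k}\cdots\ti{e}_{i_1}u_\lambda$, i.e.\ a monomial \emph{purely} in lowering operators or \emph{purely} in raising operators applied directly to $u_\lambda$. This is what makes the corollary immediate, as in the paper: the weight of such an element is $\lambda-(\alpha_{i_1}+\cdots+\alpha_{i_k})$ or $\lambda+(\alpha_{i_1}+\cdots+\alpha_{i_k})$, so for a fixed $\mu$ the elements of $\CB(\lambda)_\mu$ are among the (at most $\binom{c_1+c_2}{c_1}$, hence finitely many) elements $\ti{x}_{i_k}\cdots\ti{x}_{i_1}u_\lambda$ with $\sum_j\alpha_{i_j}=\pm(\lambda-\mu)=c_1\alpha_1+c_2\alpha_2\in Q_+$; and $\mu=\lambda$ forces $k=0$, i.e.\ $b=u_\lambda$. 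None of the machinery you set up (the $W$-action $S_w$, the convex hull of $W\lambda$, regularity of $\lambda$, the branching computation at $\ti{f}_1\ti{f}_2 u_{r_1\lambda}$) is needed, and the convexity claim $\wt V(\lambda)\subset\mathrm{conv}(W\lambda)$ that you invoke as a ``reduction'' is itself not justified in your write-up.
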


Since $\CB(\lambda)$ is a normal crystal (in the sense of Definition~\ref{normal}), 
$\CB(\lambda)$ has a canonical action $S_w \ (w \in W)$ of 
the Weyl group (see \S\ref{subsec:crystal}). 
Then, $u_\lambda \in \CB(\lambda)$ is an extremal element of weight $\lambda$. 

\begin{ithm}[Theorem~\ref{n10}] \label{ithm2} \mbox{}
\begin{enu}
\item Let $x, y \in W$. 
Then, $S_x u_\lambda = S_y u_\lambda$ if and only if $x\lambda = y\lambda$.

\item If $b \in \CB(\lambda)$ is extremal, 
then there exists  $w \in W$ such that $b = S_wu_\lambda$.
\end{enu}
\end{ithm}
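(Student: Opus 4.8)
\emph{The plan is to treat the two assertions separately, exploiting the canonical $W$-action on the normal crystal $\CB(\lambda)$.} Since the Weyl group $W$ of a rank-$2$ hyperbolic Kac-Moody algebra is the free product $\langle s_1\rangle * \langle s_2\rangle$ (infinite dihedral, as $a_1a_2>4$), any assignment $s_i\mapsto S_{s_i}$ with $S_{s_i}^2=\mathrm{id}$ extends to a genuine left action $w\mapsto S_w$ of $W$ with $\wt(S_wb)=w\wt(b)$, namely the one recalled in \S\ref{subsec:crystal}. For part~(1), the ``only if'' direction is immediate, since $S_xu_\lambda=S_yu_\lambda$ forces $x\lambda=\wt(S_xu_\lambda)=\wt(S_yu_\lambda)=y\lambda$. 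For the converse I would first note that \eqref{20171130(1)} makes $\lambda$ regular: a weight that is neither dominant nor antidominant must have both simple-coroot pairings nonzero, so by \eqref{20171130(1)} every element of $W\lambda$ has $\pair{w\lambda}{\alpha_1^\vee}\neq 0$ and $\pair{w\lambda}{\alpha_2^\vee}\neq 0$, i.e.\ no reflection of $W$ fixes $\lambda$. In the infinite dihedral group acting on $\Fh^*$, the only elements with a nonzero fixed vector are the reflections (fixing precisely their wall) and the identity, because each nontrivial rotation $(s_1s_2)^k$ is hyperbolic with eigenvalues different from $1$. Hence $\mathrm{Stab}_W(\lambda)=\{e\}$, and $x\lambda=y\lambda$ yields $y^{-1}x=e$, whence $S_xu_\lambda=S_yu_\lambda$.

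For part~(2) I would first reduce to a statement about weights. As $V(\lambda)$ is integrable, its character is $W$-invariant, so $\dim V(\lambda)_{w\lambda}=\dim V(\lambda)_\lambda=1$ for all $w\in W$ by Corollary~\ref{n14}; since $S_wu_\lambda\in\CB(\lambda)_{w\lambda}$, this gives $\CB(\lambda)_{w\lambda}=\{S_wu_\lambda\}$. Consequently it suffices to prove that every extremal $b$ satisfies $\wt(b)\in W\lambda$, for then $b$ lies in the singleton $\CB(\lambda)_{\wt(b)}=\{S_wu_\lambda\}$, giving $b=S_wu_\lambda$.

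To show $\nu:=\wt(b)\in W\lambda$ I would argue by cases according to whether the orbit $W\nu$ meets a chamber. If some $\xi\in W\nu$ is dominant, choose $w_0$ with $w_0\nu=\xi$; then $b':=S_{w_0}b$ is extremal with $\pair{\xi}{\alpha_i^\vee}\geq 0$ for $i=1,2$, so $e_ib'=0$ for both $i$, and $b'$ is a highest weight element. Being connected (Theorem~\ref{n7}) and containing such an element, $\CB(\lambda)$ is then isomorphic to the crystal $B(\xi)$ of the integrable highest weight module of highest weight $\xi$. The extremal elements of $B(\xi)$ are exactly the $W$-orbit of its highest weight element, so the extremal element $u_\lambda$ forces $\lambda\in W\xi$, i.e.\ $\xi\in W\lambda$, contradicting \eqref{20171130(1)} since $\xi$ is dominant; the antidominant case is symmetric, via a lowest weight element.

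The remaining, and hardest, case is when $W\nu$ avoids both chambers, i.e.\ $\nu$ itself satisfies \eqref{20171130(1)}; here the highest-weight obstruction is unavailable, and I expect this to be the main obstacle. The plan is to combine the explicit description of the orbit $W\lambda$ and of the coset $\lambda+Q$ coming from the root-datum analysis underlying \cite{o1} with the finiteness in Corollary~\ref{n14}, so as to show that the only weights of $\CB(\lambda)$ that lie in $\lambda+Q$ and satisfy \eqref{20171130(1)} are those in $W\lambda$; this again gives $\nu\in W\lambda$. Assembling the two cases yields $\wt(b)\in W\lambda$, and hence $b=S_wu_\lambda$, completing part~(2).
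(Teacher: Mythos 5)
Part (1) of your argument is correct but takes a different route from the paper. You prove that $\mathrm{Stab}_W(\lambda)$ is trivial (no reflection fixes $\lambda$ because, by \eqref{20171130(1)}, every element of $W\lambda$ pairs nontrivially with both simple coroots, and the rotations $(r_1r_2)^k$ have no eigenvalue $1$ on the two-dimensional $\Fh^{*}$ when $a_1a_2>4$), so $x\lambda=y\lambda$ forces $x=y$ and the conclusion is trivial. The paper instead notes that $S_{x^{-1}}S_y u_\lambda$ has weight $x^{-1}y\lambda=\lambda$ and applies Corollary~\ref{n14} to conclude it equals $u_\lambda$; this is shorter, needs no orbit geometry, and (unlike your eigenvalue argument) works verbatim in the degenerate affine case $a_1a_2=4$ that the paper also claims to cover.

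Part (2) has a genuine gap. Your reduction is fine as far as it goes: $W$-invariance of the character of the integrable module $V(\lambda)$ together with Corollary~\ref{n14} gives $\CB(\lambda)_{w\lambda}=\{S_wu_\lambda\}$, so it suffices to show $\wt(b)\in W\lambda$ for extremal $b$; and your dominant/antidominant cases correctly derive a contradiction with \eqref{20171130(1)}. But that means every extremal $b$ falls into your third case, which you explicitly do not prove: you only state a ``plan'' resting on the claim that every weight of $\CB(\lambda)$ in $\lambda+Q$ satisfying \eqref{20171130(1)} already lies in $W\lambda$. Nothing in your proposal substantiates this, and it is far from obvious --- $\BB(\lambda)\cong\CB(\lambda)$ contains multi-segment LS paths whose weights are proper convex combinations of elements of $W\lambda$, and you give no reason why such a weight could not satisfy \eqref{20171130(1)} while lying off the orbit. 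So the part of the theorem that actually has content is left unproven.

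The paper's proof of (2) avoids weight considerations entirely and argues inside the realization $\CB(\lambda)\subset\CB(\infty)\otimes\CT_\lambda\otimes\CB(-\infty)$: by Proposition~\ref{n6} an extremal $b$ has the form $b_1\otimes t_\lambda\otimes u_{-\infty}$ or $u_\infty\otimes t_\lambda\otimes b_2$. In the first case with $b_1\neq u_\infty$, choose $i$ with $\ti{e}_ib_1\neq\bzero$; extremality forces $\vp_i(b)=0$ and $n:=-\pair{\wt(b)}{\alpha_i^{\vee}}\geq 1$, and Lemma~\ref{n4} plus the tensor product rule give $S_ib=(\ti{e}_i^{\,n}b_1)\otimes t_\lambda\otimes u_{-\infty}$, an extremal element with strictly smaller $|\wt(b_1)|$; induction then yields $b=S_{r_iw}u_\lambda$. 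To close your gap you would need either to import this inductive argument or to actually prove your orbit claim about the weights.
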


\begin{ithm}[Theorem~\ref{n1124(1)}] \label{ithm3}
The extremal weight module $V(\lambda)$ of extremal weight $\lambda$ is irreducible.
\end{ithm}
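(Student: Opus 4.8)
The plan is to put a nondegenerate contravariant bilinear form on $V(\lambda)$ and combine its nondegeneracy with the one-dimensionality of the top weight space. Since $V(\lambda) = U_{\q}(\Fg)\,v_\lambda$ by the definition of the extremal weight module, it suffices to show that every nonzero $U_{\q}(\Fg)$-submodule $M \subseteq V(\lambda)$ contains $v_\lambda$: once $v_\lambda \in M$ we get $M \supseteq U_{\q}(\Fg)\,v_\lambda = V(\lambda)$, forcing $M = V(\lambda)$.

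First I would recall the polarization on $V(\lambda)$: a symmetric $\BC(\q)$-bilinear form $(\ ,\ )$ that is contravariant with respect to the anti-involution $\star$ of $U_{\q}(\Fg)$ which interchanges $E_i$ and $F_i$ and fixes the Cartan part, normalized so that $(v_\lambda, v_\lambda) = 1$. Because $\star$ fixes the Cartan part, contravariance $(Xu, w) = (u, X^\star w)$ forces $(V(\lambda)_\mu, V(\lambda)_\nu) = 0$ for $\mu \neq \nu$, so the form is block-diagonal along the weight decomposition. The key input is that the lower global basis $\{G(b)\}_{b \in \CB(\lambda)}$ is almost orthonormal: each $(G(b), G(b'))$ is regular at $\q = 0$ and reduces to $\delta_{b,b'}$ there.

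Now fix $\mu \in P$. By Corollary~\ref{n14} the set $\CB(\lambda)_\mu$ is finite, so $V(\lambda)_\mu$ is finite-dimensional with basis $\{G(b)\}_{b \in \CB(\lambda)_\mu}$; by the previous paragraph the Gram matrix of $(\ ,\ )$ on $V(\lambda)_\mu$ specializes to the identity at $\q = 0$, hence its determinant is a nonzero element of $\BC(\q)$ and the form is nondegenerate on $V(\lambda)_\mu$. In particular the form is nondegenerate on all of $V(\lambda)$, and $V(\lambda)_\lambda = \BC(\q)\,v_\lambda$ is one-dimensional with $(v_\lambda, v_\lambda) = 1 \neq 0$ (again Corollary~\ref{n14}). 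Given a nonzero submodule $M$, choose a nonzero weight vector $u \in M$ of weight $\mu$. Nondegeneracy on $V(\lambda)_\mu$ yields $w \in V(\lambda)_\mu$ with $(u, w) \neq 0$; writing $w = X v_\lambda$ with $X \in U_{\q}(\Fg)$ of weight $\mu - \lambda$, contravariance gives $(X^\star u, v_\lambda) = (u, X v_\lambda) = (u, w) \neq 0$. As $\star$ reverses weights, $X^\star u$ has weight $\lambda$, and it lies in $M$; hence $X^\star u \in M \cap V(\lambda)_\lambda = M \cap \BC(\q)\,v_\lambda$. Since it pairs nontrivially with $v_\lambda$, it is a nonzero multiple of $v_\lambda$, so $v_\lambda \in M$ and therefore $M = V(\lambda)$.

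The hard part is entirely in the second and third paragraphs, namely establishing the nondegeneracy of the polarization. Concretely, I expect the work to lie in verifying that the polarization and the lower global basis of the extremal weight module $V(\lambda)$ are compatible exactly as in the integrable highest weight case---i.e., the almost-orthonormality $(G(b), G(b')) \equiv \delta_{b,b'} \pmod{\q}$---and then observing that Corollary~\ref{n14} is precisely what turns each weight-space Gram matrix into a finite matrix, so that the determinant argument applies. It is worth noting that this route leans on the finiteness assertions of Corollary~\ref{n14} (finite weight multiplicities together with $\CB(\lambda)_\lambda = \{u_\lambda\}$) and on the cyclicity $V(\lambda) = U_{\q}(\Fg)\,v_\lambda$, rather than on the connectedness of Theorem~\ref{n7} or on Theorem~\ref{n10}.
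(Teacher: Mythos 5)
Your argument is correct in outline but follows a genuinely different route from the paper's. You run the classical ``nondegenerate contravariant form plus one-dimensional top weight space'' argument, whose only inputs from this paper are Corollary~\ref{n14} and the cyclicity $V(\lambda)=U_{\q}(\Fg)v_{\lambda}$. The paper instead argues directly with the crystal lattice: it takes a nonzero submodule $U$, chooses a weight $\mu$ with $U_{\mu}\ne\{0\}$ minimizing $|\lambda-\mu|$ within $\lambda-Q_{+}$ (resp.\ $\lambda+Q_{+}$), observes that by this minimality any nonzero $u\in U_{\mu}$ satisfies $E_{i}u=0$ for all $i$, and then derives a contradiction unless $\mu=\lambda$ by reducing $u$ modulo $\q\CL(\lambda)$ and applying $\ti{e}_{i}$: Theorem~\ref{n7} guarantees that every $b\in\CB(\lambda)_{\mu}$ with $\mu\in\lambda-Q_{+}$, $\mu\ne\lambda$, satisfies $\ti{e}_{i}b\ne\bzero$ for some $i$, so the leading term of $u$ in the global basis cannot be killed by all the $\ti{e}_{i}$. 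In effect the paper replaces your nondegeneracy step by a ``no primitive vectors below the top'' step carried out entirely inside $\CL(\lambda)/\q\CL(\lambda)$, using only the decomposition \eqref{20171126(1)} (already quoted from [K2]), Theorem~\ref{n7}, and the linear independence of crystal basis elements.

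The one substantive caveat about your route is the input you yourself flag: the existence of a polarization on $V(\lambda)$ for which the global basis is almost orthonormal. This is not proved or cited anywhere in this paper, and it is not a formal consequence of the facts the paper does quote; it is a genuine theorem about extremal weight modules. Kashiwara establishes such a polarization with $(G(b),G(b'))\in\delta_{b,b'}+\q\BA_{0}$ in his work on level-zero representations, and the construction (via the universal property of $V(\lambda)$ applied to the restricted dual) is not affine-specific, but you would need to state and source it carefully in the hyperbolic setting. Granting that input, your proof closes. Note also that once almost-orthonormality is granted, nondegeneracy follows without the finite Gram-matrix determinant: for $u=\sum_{b} c_{b}G(b)$ with all $c_{b}\in\BA_{0}$ and some $c_{b}(0)\ne 0$, one has $(u,G(b))\equiv c_{b}\pmod{\q\BA_{0}}$. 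So Corollary~\ref{n14} is really only needed for $V(\lambda)_{\lambda}=\BC(\q)v_{\lambda}$, which is exactly where the paper uses it as well.
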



Finally, we prove the following isomorphism theorem. 
\begin{ithm}[Theorem~\ref{20171126(4)}] \label{ithm4}
The crystal basis $\CB(\lambda)$ of 
the extremal weight module $V(\lambda)$ of 
extremal weight $\lambda$ is isomorphic, as a crystal, to
the crystal $\BB(\lambda)$ of LS paths of shape $\lambda$.  
\end{ithm}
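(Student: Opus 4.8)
The plan is to construct an explicit crystal isomorphism between $\CB(\lambda)$ and $\BB(\lambda)$ by exploiting the four structural results already established. The standard strategy for such isomorphism theorems is to verify that both crystals satisfy the hypotheses of a suitable uniqueness (characterization) theorem for crystals — typically along the lines of Kashiwara's characterization of crystal bases of extremal weight modules, or the abstract crystal-morphism lifting argument. Concretely, I would first observe that $\BB(\lambda)$ is itself a normal crystal with a canonical Weyl group action, that the straight-line path $\pi_\lambda$ (the LS path corresponding to $\lambda$ itself) is an extremal element of weight $\lambda$, and that $\BB(\lambda)_\lambda = \{\pi_\lambda\}$. These are known properties of LS path crystals (from the work of Littelmann and of Joseph), and they exactly mirror Corollary~\ref{icor2} and the remark identifying $u_\lambda$ as extremal.

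The key step is to define a map. Since $u_\lambda$ generates $\CB(\lambda)$ as a crystal (by connectedness, Theorem~\ref{ithm1}) and $\pi_\lambda$ generates $\BB(\lambda)$, I would attempt to define the morphism by sending $u_\lambda \mapsto \pi_\lambda$ and extending along the root operators $\ti{e}_i, \ti{f}_i$. To make this well-defined one invokes a universality property of $\CB(\lambda)$: by \cite[Proposition 8.2.2]{o4}, $\CB(\lambda)$ maps into any normal crystal sending $u_\lambda$ to an extremal element of the same weight, provided the target crystal has no elements of weight $\lambda$ other than that image and the relevant finiteness holds. This yields a strict crystal morphism $\Psi \colon \CB(\lambda) \to \BB(\lambda)$ with $\Psi(u_\lambda) = \pi_\lambda$. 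The morphism is surjective because its image is a subcrystal containing $\pi_\lambda$, and $\BB(\lambda)$ is connected (again a known LS-path fact), so the image is all of $\BB(\lambda)$.

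For injectivity I would argue weight-space by weight-space. By Corollary~\ref{icor2} every weight space $\CB(\lambda)_\mu$ is finite, and the corresponding weight multiplicities of $\BB(\lambda)$ are governed by the same combinatorics (Demazure/character data forced by the extremal structure and the common Weyl group action). The decisive input is that a strict morphism of normal crystals that is bijective on the single weight space $\CB(\lambda)_\lambda$ and commutes with the Weyl group action $S_w$ must be injective on each orbit, and Theorem~\ref{ithm2} guarantees that every extremal element of $\CB(\lambda)$ lies in $Wu_\lambda$, matching the extremal elements $W\pi_\lambda$ of $\BB(\lambda)$ bijectively via part (1). Propagating injectivity from the extremal elements to all of $\CB(\lambda)$ uses connectedness together with the fact that the crystal graph structure near each vertex is determined by the $\ti{e}_i,\ti{f}_i$-strings, so a morphism injective on a set of generators whose orbit meets every connected component and which is a \emph{strict} (string-length-preserving) morphism must be injective everywhere.

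The main obstacle I anticipate is establishing injectivity rigorously rather than merely surjectivity: a strict crystal morphism onto a connected target need not be injective in general, so the argument must genuinely use the finiteness of weight spaces (Corollary~\ref{icor2}) to compare weight multiplicities of $\CB(\lambda)$ and $\BB(\lambda)$ and force them to coincide. The cleanest route is to show that the character (or the formal weight-multiplicity generating function) of $\CB(\lambda)$ equals that of $\BB(\lambda)$; combined with the surjectivity of $\Psi$ and finite-dimensionality of each weight space, equality of characters upgrades the surjection to a bijection. Verifying this character equality — most likely by identifying both with an explicit sum over the Weyl group orbit $W\lambda$ corrected by Demazure-type contributions determined in the rank-$2$ hyperbolic case — is where the real work lies, and it is precisely here that the rank-$2$ hyperbolic combinatorics developed earlier in the paper, together with Theorem~\ref{ithm3} (irreducibility of $V(\lambda)$), will be indispensable.
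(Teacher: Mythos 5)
There is a genuine gap at the very center of your argument: the well-definedness of the map $u_\lambda \mapsto \pi_\lambda$, ``extended along the root operators.'' You propose to obtain a strict morphism $\Psi\colon \CB(\lambda)\to\BB(\lambda)$ from ``a universality property of $\CB(\lambda)$'' via \cite[Proposition~8.2.2]{o4}, but that proposition gives universality of the \emph{module} $V(\lambda)$ (any integrable module containing an extremal vector of weight $\lambda$ receives a module map), not a lifting property for crystal morphisms into an arbitrary normal crystal such as $\BB(\lambda)$. There is no general principle asserting that a connected normal crystal generated by an extremal element maps onto another one with matching weight-$\lambda$ data; producing such a morphism is precisely the content of the theorem. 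Concretely, what must be shown is that for monomials $X, X_1, X_2$ in the Kashiwara operators, $Xu_\lambda\ne\bzero$ iff $X\pi_\lambda\ne\bzero$, and $X_1u_\lambda=X_2u_\lambda$ iff $X_1\pi_\lambda=X_2\pi_\lambda$; these two equivalences simultaneously give well-definedness, injectivity, and surjectivity, and neither follows from connectedness plus $\CB(\lambda)_\lambda=\{u_\lambda\}$ alone. Your fallback for injectivity --- a character comparison between $\CB(\lambda)$ and $\BB(\lambda)$ --- is exactly the statement one is trying to prove (it is listed in the introduction as a \emph{consequence} of the isomorphism), and you leave it as ``where the real work lies'' without a mechanism; the appeal to Theorem~\ref{n1124(1)} (irreducibility) does not help here and is not used by the paper.

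The missing idea is the similarity (dilation) technique of Kashiwara and Naito--Sagaki, which is how the paper proves the general Theorem~\ref{thm:isom2}. One constructs injections $\Sigma_m\colon\BB(\mu)\hookrightarrow\BB(\mu)^{\otimes m}$ and $\Sigma_m\colon\CB_0(\mu)\hookrightarrow\CB_0(\mu)^{\otimes m}$ that multiply all crystal data by $m$ and intertwine $\ti{e}_i$ with $\ti{e}_i^{\,m}$ (Propositions~\ref{prop:s1} and \ref{prop:s2}); for a fixed finite string of operators one chooses $m$ with enough divisors so that every intermediate image becomes a pure tensor of \emph{extremal} elements $S_{w}\pi_\mu$, resp.\ $S_{w}u_\mu$. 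Since the tensor product rule is governed entirely by the quantities $\ve_i$, $\vp_i$, $\pair{\wt(\cdot)}{\alpha_i^{\vee}}$, and these agree on $S_w\pi_\mu$ and $S_wu_\mu$ by \eqref{eq:ten}, the two computations run in parallel, yielding \eqref{eq:nonzero} and then \eqref{eq:equal} (the latter using $\CB_0(\mu)_\mu=\{u_\mu\}$ and $\BB_0(\mu)_\mu=\{\pi_\mu\}$, which for $\mu=\lambda$ are Corollary~\ref{n14} and \cite[Theorem~4.2.1]{o1}). Without this, or some equivalent device for comparing the two crystal graphs locally along arbitrary paths from the highest-weight-like generator, your construction of $\Psi$ does not get off the ground.
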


By Corollary~\ref{icor2}, 
we can define the character of $V(\lambda)$ by:
\begin{equation*}
\ch V(\lambda)=\sum_{\mu \in P} \dim V(\lambda)_{\mu} e(\mu);
\end{equation*}
By Theorem~\ref{ithm4} above, we see that 
$\dim V(\lambda)_{\mu}$ is equal to the number of 
elements of weight $\mu$ in the crystals $\CB(\lambda) \cong \BB(\lambda)$. 
In \cite[Theorem 4.1.2]{o1}, she gave a combinatorial description of 
the crystal $\BB(\lambda)$. Combining these results, 
we can obtain a combinatorial formula of $\ch V(\lambda)$ 
(though it may be not very explicit).
In \cite{NS16}, \cite{LNSSS3}, and \cite{NNS}, 
they proved that in the untwisted affine case, 
the graded characters of certain subquotients of extremal weight modules
can be written in terms of specializations of symmetric and nonsymmetric 
Macdonald polynomials. We expect that also in the hyperbolic case, 
some interesting ``symmetric'' series arise as the characters of 
extremal weight modules.

This paper is organized as follows. 
In Section~\ref{sec:pre}, we fix our notation, and recall the definitions and 
basic properties of extremal weight modules and 
their crystal bases, and also of LS paths. 
In Subsection~\ref{subsec:main}, we state our main results 
(the four theorems and one corollary above). 
Subsections~\ref{subsec:prf1}, \ref{subsec:prf2}, and \ref{subsec:prf3} are 
devoted to proofs of Theorems~\ref{n7}, \ref{n10}, and \ref{n1124(1)}, respectively. 
In Section~\ref{sec:prf4}, we prove the isomorphism theorem (Theorem~\ref{20171126(4)})
in more general setting, after showing ``similarities'' of the crystals 
(Proposition~\ref{prop:s1} and \ref{prop:s2}).
%
%
\section{Preliminaries.}
\label{sec:pre}
%
%
\subsection{Kac-Moody algebras.}
\label{subsec:km}

Let $A = (a_{ij})_{i, j \in I}$ be 
a symmetrizable generalized Cartan matrix 
with $I$ the finite index set. Let $\Fg = \Fg(A)$ be 
the Kac-Moody algebra  associated to $A$ over $\BC$. 
Denote by $\Fh$ the Cartan subalgebra of $\Fg$ (with $\dim \Fh=2|I|-\rank A$), 
$\{\alpha_i \mid i \in I\} \subset \Fh^{\ast} : = \Hom_{\BC}(\Fh, \BC)$ 
the set of (linearly independent) simple roots, 
and $\{\alpha_i^{\vee} \mid i \in I\} \subset \Fh$ 
the set of (linearly independent) simple coroots.
We set $Q_{+} := \sum_{i \in I}\BZ_{\geq 0}\alpha_i$. 
Denote by $W = \langle r_i \mid i \in I \rangle$ 
the Weyl group of $\Fg$, where $r_i$ is the simple reflection in $\alpha_i$ for $i \in I$.
Let $\Lambda_i \in \Fh^{\ast}, i \in I$, be 
the fundamental weights for $\Fg$, i.e., 
$\pair{\Lambda_i}{\alpha_j^{\vee}} = \delta_{i, j}$ 
for $i,\,j \in I$. Take an integral weight lattice $P$ 
containing $\alpha_{i}$, $i \in I$, and $\Lambda_{i}$, $i \in I$.
%
%
\subsection{Crystal bases and crystals.}
\label{subsec:crystal}

For the definitions of crystal bases and crystals, see \cite{o2} and \cite{o8}; 
our convention of the tensor product rule of crystals is 
the same as that in these references. Namely, for crystals $\CB_{1}$ and $\CB_{2}$, 
we define their tensor product $\CB_{1} \otimes \CB_{2}$ as follows: 
for $b_{1} \in \CB_{1}$, $b_{2} \in \CB_{2}$, and $i \in I$, 
\begin{equation*}
\begin{split}
& \wt(b_{1} \otimes b_{2}):=\wt(b_{1}) + \wt(b_{2}), \\[2mm]
& \ti{e}_{i}(b_{1} \otimes b_{2}):=
  \begin{cases}
  \ti{e}_{i}b_{1} \otimes b_{2} & \text{if $\vp_{i}(b_{1}) \ge \ve_{i}(b_{2})$}, \\
  b_{1} \otimes \ti{e}_{i}b_{2} & \text{if $\vp_{i}(b_{1}) < \ve_{i}(b_{2})$},
  \end{cases} \\[2mm]
& \ti{f}_{i}(b_{1} \otimes b_{2}):=
  \begin{cases}
  \ti{f}_{i}b_{1} \otimes b_{2} & \text{if $\vp_{i}(b_{1}) > \ve_{i}(b_{2})$}, \\
  b_{1} \otimes \ti{f}_{i}b_{2} & \text{if $\vp_{i}(b_{1}) \le \ve_{i}(b_{2})$},
  \end{cases} \\[2mm]
& \ve_{i}(b_{1} \otimes b_{2}):=
  \max \bigl\{ \ve_{i}(b_{1}), \, \ve_{i}(b_{2})-\pair{\wt(b_{1})}{\alpha_{i}^{\vee}} \bigr\}, \\[2mm]
& \vp_{i}(b_{1} \otimes b_{2}):=
  \max \bigl\{ \vp_{i}(b_{2}), \, \vp_{i}(b_{1})+\pair{\wt(b_{2})}{\alpha_{i}^{\vee}} \bigr\}. 
\end{split}
\end{equation*}

Let $\CB$ be a crystal, and let $\ti{e}_i$ and $\ti{f}_i$, $i \in I$, 
be the Kashiwara operators for $\CB$. 
For $b \in \CB$ and $i \in I$, we set $\ti{e}_i^{\max} b := \ti{e}_i^{\ve_i(b)}b$ 
if $\ve_i(b) = \max\{n \geq 0 \mid \ti{e}_i^n b \neq \bzero\}$, 
where $\bzero$ is an extra element not contained in any crystal. 
Similarly, we set $\ti{f}_i^{\max}b := \ti{f}_i^{\vp_i(b)}b$ 
if $\vp_i(b) = \max\{n \geq 0 \mid \ti{f}_i^n b \neq \bzero\}$.

\begin{dfn}[{see \cite[page 389]{o4} and \cite[page 182]{o2}}] \label{normal}
A crystal $\CB$ is said to be \emph{normal} 
if it satisfies the following condition for every $J \subset I$ 
such that the Levi subalgebra $\Fg_J$ of $\Fg$ corresponding to $J$ is finite-dimensional: 
if we regard $\CB$ as a crystal for 
$U_{\q}(\Fg_J)$ by restriction, 
then it is isomorphic to the crystal basis of 
a finite-dimensional $U_{\q}(\Fg_J)$-module.
\end{dfn}

\begin{rem} \label{20171125(2)} \mbox{}
\begin{enu}
\item 
If $\CB$ is a normal crystal, then 
$\ve_i(b) = \max\{n \geq 0 \mid \ti{e}_i^n b \neq \bzero\}$ and 
$\vp_i(b) = \max\{n \geq 0 \mid \ti{f}_i^n b \neq \bzero\}$ 
for all $b \in \CB$ and $i \in I$. 

\item 
In the case that $\Fg$ is a hyperbolic Kac-Moody algebra of rank $2$, 
the converse of part (1) also holds since $\Fg$ is infinite-dimensional. 
In particular, the main crystals in \S\ref{sec:main} below 
are obviously normal.
\end{enu}
\end{rem}

We know from \cite[\S7]{o4} (see also \cite[Theorem 11.1]{o2}) 
that a normal crystal $\CB$ has an action of the Weyl group $W$ as follows. 
For $i \in I$ and $b \in \CB$, we set
\begin{equation} \label{Si}
S_ib := 
\begin{cases}
\ti{f}_i^{ \pair{\wt(b)}{\alpha_i^{\vee}} }b 
 & \text{if $\pair{\wt(b)}{\alpha_i^{\vee}} \geq 0$}, \\[1mm]
\ti{e}_i^{ -\pair{\wt(b)}{\alpha_i^{\vee}} }b 
 & \text{if $\pair{\wt(b)}{\alpha_i^{\vee}} \leq 0$}.
\end{cases}
\end{equation} 
Then, for $w \in W$, we set 
$S_w := S_{i_1} \cdots S_{i_k}$ if $w = r_{i_1} \cdots r_{i_k}$. 
Notice that $\wt(S_w b) = w \wt(b)$ for $w \in W$ and $b \in \CB$.

\begin{dfn}
An element $b$ of a normal crystal $\CB$ is said to be \emph{extremal} 
if for each $w \in W$ and $i \in I$, 
$\ti{e}_i(S_w b) = \bzero$ (resp., $\ti{f}_i(S_w b) = \bzero$) 
if $\pair{\wt(S_w b)}{\alpha_i^{\vee}} \geq 0$ (resp., $\leq 0$).
\end{dfn}

Now, let $\CB(\infty)$ (resp., $\CB(-\infty)$) be 
the crystal basis of the negative part $U_{\q}^-(\Fg)$ 
(resp., the positive part $U_{\q}^+(\Fg)$) of 
the quantized universal enveloping algebra $U_{\q}(\Fg)$ over $\BC(\q)$ associated to $\Fg$, 
and let $u_\infty \in \CB(\infty)$ (resp., $u_{-\infty} \in \CB(-\infty)$) be 
the element corresponding to $1 \in U_{\q}^-(\Fg)$ (resp., $1 \in U_{\q}^+(\Fg)$). 
Denote by $\ti{e}_i$ and $\ti{f}_i$, $i \in I$, 
the raising and lowering Kashiwara operators on $\CB(\pm \infty)$, respectively. 
For $i \in I$, we define $\ve_i,\,\vp_i : \CB(\infty) \rightarrow \BZ$ and 
$\ve_i,\,\vp_i : \CB(-\infty) \rightarrow \BZ$ by
\begin{equation*}
\begin{split}
& \ve_i(b) := \max\{n \geq 0 \mid \ti{e}_i^n b \neq \bzero\}, \quad 
  \vp_i(b) := \ve_i(b) + \pair{\wt(b)}{\alpha_i^{\vee}} \quad \text{for $b \in \CB(\infty)$}, \\[2mm]
& \vp_i(b) := \max\{n \geq 0 \mid \ti{f}_i^n b \neq \bzero\}, \quad 
  \ve_i(b) := \vp_i(b) - \pair{\wt(b)}{\alpha_i^{\vee}} \quad \text{for $b \in \CB(-\infty)$},
\end{split}
\end{equation*}
respectively. Denote by $\ast : \CB(\pm \infty) \rightarrow \CB(\pm \infty)$ 
the $\ast$-operator on $\CB(\pm \infty)$, 
which is induced from a $\BC(\q)$-algebra antiautomorphism 
$\ast : U_{\q}(\Fg) \rightarrow U_{\q}(\Fg)$ 
(see \cite[Theorem~2.1.1]{o6} and \cite[\S8.3]{o2}). 
We see that $\wt(b^{\ast}) = \wt(b)$ for all $b \in \CB(\pm \infty)$.
The next lemma (which is likely well-known to experts)
follows immediately from the fact that 
$\ti{f}_i^k u_{\infty}$ (resp., $\ti{e}_i^k u_{-\infty}$) is 
a unique element of weight $-k\alpha_i$ (resp., $k\alpha_i$) 
in $\CB(\infty)$ (resp., $\CB(-\infty)$).

\begin{lem} \label{n11}
We have $(\ti{f}_i^k u_{\infty})^\ast = \ti{f}_i^{k}u_{\infty}$ 
for all $k \in \BZ_{\geq 0}$ and $i \in I$. Similarly, 
we have  $(\ti{e}_i^k u_{-\infty})^\ast = \ti{e}_i^{k}u_{-\infty}$ 
for all $k \in \BZ_{\geq 0}$ and $i \in I$.
\end{lem}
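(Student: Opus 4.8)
The plan is to combine two ingredients: the weight-preservation property $\wt(b^\ast) = \wt(b)$ of the $\ast$-operator noted just above, together with the uniqueness of the element of weight $-k\alpha_i$ (resp.\ $k\alpha_i$) in $\CB(\infty)$ (resp.\ $\CB(-\infty)$).

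First I would handle $\CB(\infty)$. Since $u_\infty$ has weight $0$ and each application of $\ti{f}_i$ subtracts $\alpha_i$ from the weight, the element $\ti{f}_i^k u_\infty$ has weight $-k\alpha_i$. Applying the $\ast$-operator and invoking $\wt(b^\ast) = \wt(b)$, I find that $(\ti{f}_i^k u_\infty)^\ast$ is again an element of weight $-k\alpha_i$ in $\CB(\infty)$. Because $\ti{f}_i^k u_\infty$ is the unique such element, the two must coincide, giving $(\ti{f}_i^k u_\infty)^\ast = \ti{f}_i^k u_\infty$. The treatment of $\CB(-\infty)$ is entirely parallel: the element $\ti{e}_i^k u_{-\infty}$ has weight $k\alpha_i$, its $\ast$-image has the same weight, and uniqueness forces $(\ti{e}_i^k u_{-\infty})^\ast = \ti{e}_i^k u_{-\infty}$.

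The only statement that requires justification is the uniqueness, and this is where whatever content the lemma has resides---though even here there is no genuine obstacle. The weights of $\CB(\infty)$ all lie in $-Q_{+} = -\sum_{j \in I} \BZ_{\geq 0}\alpha_j$, and since the simple roots are linearly independent, the only monomials in the $f_j$ yielding weight $-k\alpha_i$ involve $f_i$ alone; hence the weight space $(U_\q^-(\Fg))_{-k\alpha_i}$ is one-dimensional, spanned by the divided power $f_i^{(k)}$, and its sole crystal basis element is $\ti{f}_i^k u_\infty$. The analogous statement for the positive part $U_\q^+(\Fg)$ and $\CB(-\infty)$ follows the same way. Once this uniqueness is in place, the lemma is immediate, exactly as the remark preceding it indicates.
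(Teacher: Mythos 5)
Your proof is correct and follows essentially the same route as the paper, which likewise deduces the lemma from the weight-preservation of $\ast$ together with the uniqueness of the element of weight $-k\alpha_i$ (resp.\ $k\alpha_i$) in $\CB(\infty)$ (resp.\ $\CB(-\infty)$). The only difference is that you spell out the one-dimensionality of the relevant weight spaces, which the paper takes for granted.
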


For $\mu \in P$, denote by $\CT_\mu = \{t_\mu\}$ 
the crystal consisting of a single element $t_\mu$ such that 
\begin{equation*}
\begin{split}
& \wt(t_\mu) = \mu, \\
& \ti{e}_i t_{\mu} = \ti{f}_i t_{\mu} = \bzero \quad \text{for $i \in I$}, \\
& \ve_i(t_\mu) = \vp_i(t_\mu) = -\infty \quad \text{for  $i \in I$}.
\end{split}
\end{equation*}
%
%
\subsection{Crystal bases of extremal weight modules.}
\label{subsec:extmod}

Let $\mu \in P$ be an arbitrary integral weight. 
The \emph{extremal weight module} $V(\mu)$ of extremal weight $\mu$ is, 
by definition, the integrable $U_{\q}(\Fg)$-module generated 
by a single element $v_\mu$ with the defining relation that 
$v_\mu$ is an extremal weight vector of weight $\mu$ 
in the sense of \cite[Definition~8.1.2]{o4}. 
We know from \cite[Proposition~8.2.2]{o4} that 
$V(\mu)$ has a crystal basis $(\CL(\mu), \CB(\mu))$, and 
a global basis $\{G(b) \mid b \in \CB(\mu)\}$; recall that 
\begin{equation} \label{20171126(1)}
V(\mu) = \bigoplus_{b \in \CB(\mu)}\BC(\q)G(b), \hspace{8mm} 
\CL(\mu) = \bigoplus_{b \in \CB(\mu)} \BA_0G(b),
\end{equation}
where $\BA_0 := \{f(\q) \in \BC(\q) \mid \text{$f(\q)$ is regular at $\q = 0$}\}$.

\begin{rem} \label{20171126(2)}
We see from \cite[Proposition 8.2.2 (iv) and (v)]{o4} that 
$V(\mu) \cong V(w\mu)$ as $U_{\q}(\Fg)$-modules, 
and $\CB(\mu) \cong \CB(w\mu)$ as crystals for all $\mu \in P$ and $w \in W$. 
Also, we know from the comment at the end of \cite[\S8.2]{o4} that 
if $\mu \in P$ is dominant (resp., antidominant), 
then $V(\mu)$ is isomorphic, as a $U_{\q}(\Fg)$-module, 
to the integrable highest (resp., lowest) weight module 
of highest (resp., lowest) weight $\mu$, 
and $\CB(\mu)$ is isomorphic, as a crystal, to its crystal basis. 
So, we are interested in those $\mu \in P$ such that
\begin{equation}\label{eq31}
\begin{split}
& \text{any element of $W\mu$ is neither dominant nor antidominant} \\
& \text{(or, no element of $W\mu$ is either dominant or antidominant).}
\end{split}
\end{equation} 
\end{rem}

Now, the crystal basis $\CB(\mu)$ of $V(\mu)$ 
can be realized (as a crystal) as follows. We set
\begin{equation*}
\CB := \bigsqcup_{\mu \in P} \CB(\infty) \otimes \CT_{\mu} \otimes \CB(-\infty);
\end{equation*}
in fact, $\CB$ is isomorphic, as a crystal, to 
the crystal basis $\CB(\wti{U}_{\q}(\Fg))$ of 
the modified quantized universal enveloping algebra $\wti{U}_{\q}(\Fg)$ 
associated to $\Fg$ (see \cite[Theorem 3.1.1]{o4}). 
Denote by $\ast : \CB \rightarrow \CB$ the $\ast$-operation on $\CB$, 
which is induced from a $\BC(\q)$-algebra antiautomorphism 
$\ast : \wti{U}_{\q}(\Fg) \rightarrow \wti{U}_{\q}(\Fg)$ (see \cite[Theorem~4.3.2]{o4}); 
we know from \cite[Corollary~4.3.3]{o4} 
that for $b_1 \in \CB(\infty)$, $b_2 \in \CB(-\infty)$, and $\mu \in P$,
\begin{equation} \label{eq26}
(b_1 \otimes t_\mu \otimes b_2)^{\ast} = 
 b_1^\ast \otimes t_{-\mu-\wt(b_1)-\wt(b_2)} \otimes b_2^\ast; 
\end{equation}
\begin{rem} \label{rem:wt}
The weight of 
$(b_1 \otimes t_\mu \otimes b_2)^{\ast}$ is equal to $-\mu$ for all
$b_1 \in \CB(\infty)$ and $b_2 \in \CB(-\infty)$ since 
$\wt (b_1^{\ast}) = \wt(b_1)$ and $\wt (b_2^{\ast}) = \wt(b_2)$.
\end{rem}

Because $\CB$ is a normal crystal by \cite[\S2.1 and Theorem 3.1.1]{o4}, 
$\CB$ has the action of the Weyl group $W$ (see \S\ref{subsec:crystal}). 
We know the following proposition from \cite[Proposition 8.2.2 (and Theorem 3.1.1)]{o4}.

\begin{prop}\label{20171126(3)}
For $\mu \in P$, the subset
\begin{equation} \label{eq07}
\bigl\{ 
 b \in \CB(\infty) \otimes \CT_{\mu} \otimes \CB(-\infty) \mid 
 \text{\rm $b^{\ast}$ is extremal} \bigr\}
\end{equation}
is a subcrystal of $\CB(\infty) \otimes \CT_{\mu} \otimes \CB(-\infty)$, 
and is isomorphic, as a crystal, to 
the crystal basis $\CB(\mu)$ of 
the extremal weight module $V(\mu)$ of extremal weight $\mu$.
\end{prop}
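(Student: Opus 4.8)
The plan is to deduce the statement from Kashiwara's structure theory for the modified quantized enveloping algebra $\wti{U}_{\q}(\Fg)$ and its global basis. Recall from \cite[Theorem 3.1.1]{o4} that $\wti{U}_{\q}(\Fg) = \bigoplus_{\mu\in P}\wti{U}_{\q}(\Fg)a_\mu$, that the left $U_{\q}(\Fg)$-module $\wti{U}_{\q}(\Fg)a_\mu$ has a crystal basis with underlying crystal $\CB(\infty)\otimes\CT_\mu\otimes\CB(-\infty)$ and a global basis $\{G(b)\}$ indexed by it, and that the induced $\ast$-operation is given by \eqref{eq26}. Let $u_\mu := u_\infty\otimes t_\mu\otimes u_{-\infty}$ be the element corresponding to the generator $a_\mu$; by \eqref{eq26} together with $u_\infty^\ast = u_\infty$ and $u_{-\infty}^\ast = u_{-\infty}$ (Lemma~\ref{n11}, the case $k=0$) one computes $(u_\mu)^\ast = u_\infty\otimes t_{-\mu}\otimes u_{-\infty}$, which is extremal, so $u_\mu$ belongs to the set in \eqref{eq07}. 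Since $v_\mu$ generates $V(\mu)$ and is a weight vector of weight $\mu$, there is a canonical surjection $\pi_\mu\colon \wti{U}_{\q}(\Fg)a_\mu\twoheadrightarrow V(\mu)$ with $G(u_\mu)\mapsto v_\mu$. The goal is to identify the image of the global basis and match crystal structures.

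The core of the argument is to show that $\ker\pi_\mu$ is a based submodule and to identify its complementary global basis combinatorially. Concretely, I would prove that $\pi_\mu(G(b))$ is either $0$ or, up to a power of $\q$, an element of the global basis of $V(\mu)$, and that $\pi_\mu(G(b))\ne 0$ precisely when $b^\ast$ is extremal. The defining relation of $V(\mu)$ is that $v_\mu$ be an extremal weight vector; translating this relation through the $\ast$-operation, which interchanges the left and right $U_{\q}(\Fg)$-module structures on $\wti{U}_{\q}(\Fg)$ and hence the two crystal structures on $\CB$, converts extremality of the generator into the condition that the surviving indices $b$ are exactly those with $b^\ast$ extremal. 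Granting this, the map $b\mapsto \pi_\mu(G(b))\bmod \q\CL(\mu)$ restricts to a bijection from the set \eqref{eq07} onto $\CB(\mu)$.

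It then remains to check that this bijection is an isomorphism of crystals, that is, that it commutes with $\wt$, $\ti{e}_i$, $\ti{f}_i$, $\ve_i$, and $\vp_i$. This is automatic once the based-submodule property is in hand: $\pi_\mu$ is a morphism of $U_{\q}(\Fg)$-modules carrying $\CL(\wti{U}_{\q}(\Fg)a_\mu)$ onto $\CL(\mu)$, so it intertwines the Kashiwara operators modulo $\q\CL(\mu)$, and the image of the crystal basis is the crystal basis of $V(\mu)$. In particular, being isomorphic as a crystal to the crystal basis $\CB(\mu)$, the set \eqref{eq07} is stable under $\ti{e}_i$ and $\ti{f}_i$, which yields the assertion that it is a subcrystal of $\CB(\infty)\otimes\CT_\mu\otimes\CB(-\infty)$.

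The main obstacle is the identification carried out in the second paragraph: showing that $\ker\pi_\mu$ is spanned by those $G(b)$ with $b^\ast$ \emph{not} extremal, equivalently that the $\ast$-extremal global basis elements descend to a $\BC(\q)$-basis of $V(\mu)$. This is precisely the delicate part of \cite[Proposition 8.2.2]{o4}; it rests on the positivity and compatibility properties of the global basis of $\wti{U}_{\q}(\Fg)$ under the two module structures and on Kashiwara's characterization of extremal vectors, and I would invoke those results rather than reconstruct them here.
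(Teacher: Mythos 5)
The paper does not prove Proposition~\ref{20171126(3)} at all: it is quoted verbatim from \cite[Proposition 8.2.2 and Theorem 3.1.1]{o4}. Your sketch is an accurate outline of how Kashiwara establishes it, and at the one genuinely delicate step (that the $\ast$-extremal global basis elements span a complement of the kernel of the canonical surjection $\wti{U}_{\q}(\Fg)a_\mu \twoheadrightarrow V(\mu)$) you defer to the same reference, so your proposal is correct and in substance identical to what the paper does.
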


In the following, we identify the crystal basis $\CB(\mu)$ 
with the subcrystal \eqref{eq07} of 
$\CB(\infty) \otimes \CT_{\mu} \otimes \CB(-\infty)$. We set 
\begin{equation}\label{eq30}
u_\mu := u_{\infty} \otimes t_\mu \otimes u_{-\infty} 
\in \CB(\infty) \otimes \CT_{\mu} \otimes \CB(-\infty)
\end{equation}
Then, $u_\mu$ is an extremal element of weight $\mu$ contained in $\CB(\mu)$, 
which corresponds to the extremal weight vector $v_\mu \in V(\mu)$ 
in the sense that $v_\mu = G(u_\mu)$.
%
%
\subsection{Lakshmibai-Seshadri paths.}
\label{subsec:LS}

Let us recall the definition of 
Lakshmibai-Seshadri paths (LS paths for short) 
from \cite[\S4]{o10} (see also \cite[\S2.2]{o1}). 
Let $\mu \in P$ be an arbitrary integral weight. 

\begin{dfn} \label{20171124(2)}
For $\nu,\,\nu' \in W\mu$, 
we write $\nu \ge \nu'$ if there exist a sequence 
$\nu=\xi_{0},\,\xi_{1},\,\dots$, $\xi_{p}=\nu'$ of elements in $W\mu$ and 
a sequence $\beta_{1},\,\dots,\,\beta_{p}$ of positive real roots 
such that $\xi_{q}=r_{\beta_{q}}(\xi_{q-1})$ and 
$\pair{\xi_{q-1}}{\beta_{q}^{\vee}} < 0$ for each $q=1,\,2,\,\dots,\,p$, 
where for a positive real root $\beta$, $r_{\beta} \in W$ denotes  
the reflection in $\beta$, and $\beta^{\vee}$ denotes the dual root of $\beta$. 
If $\nu \ge \nu'$, then we define $\dist(\nu,\nu')$ to 
be the maximal length $p$ of all possible such sequences 
$\nu=\xi_{0},\,\xi_{1},\,\dots,\,\xi_{p}=\nu'$ for $(\nu,\nu')$.
\end{dfn}

\begin{dfn} \label{20171124(3)}
Let $\nu,\,\nu' \in W\mu$ with $\nu > \nu'$, 
and let $0 < \sigma < 1$ be a rational number. 
A $\sigma$-chain for $(\nu,\nu')$ is
a sequence $\nu=\xi_{0} > \xi_{1} > 
\dots > \xi_{p}=\nu'$ of elements in $W\mu$ 
such that $\dist(\xi_{q-1},\xi_{q})=1$ and 
$\sigma\pair{\xi_{q-1}}{\beta_{q}^{\vee}} \in \BZ_{< 0}$ 
for all $q=1,\,2,\,\dots,\,p$, where $\beta_{q}$ is 
the (unique) positive real root such that $\xi_q = r_{\beta_q}\xi_{q-1}$.
\end{dfn}

\begin{dfn} \label{dfn:LS}
An \emph{LS path of shape $\mu$} is a pair $\pi=(\ud{\nu}\,;\,\ud{\sigma})$ 
of a sequence $\ud{\nu}:\nu_{1} > \nu_{2} > \cdots > \nu_{s}$ 
of elements in $W\mu$ and a sequence 
$\ud{\sigma}:0=\sigma_{0} < \sigma_{1} < \cdots < \sigma_{s}=1$ of 
rational numbers such that 
there exists a $\sigma_{u}$-chain for $(\nu_{u},\,\nu_{u+1})$ 
for all $u=1,\,2,\,\dots,\,s-1$.
\end{dfn}

Denote by $\BB(\mu)$ the set of LS paths of shape $\mu$. 
We identify $\pi=(\ud{\nu}\,;\,\ud{\sigma}) \in \BB(\mu)$ 
(as in Definition~\ref{dfn:LS}) with
the following piecewise-linear continuous map 
$\pi:[0,1] \rightarrow \BR \otimes_{\BZ} P$: 
\begin{equation*}
\pi(t)=\sum_{k=1}^{u-1}
(\sigma_{k}-\sigma_{k-1})\nu_{k}+
(t-\sigma_{u-1})\nu_{u} \quad \text{for \ }
\sigma_{u-1} \le t \le \sigma_{u}, \  1 \le u \le s.
\end{equation*}

\begin{rem} \label{rem:str}
We see from the definition of LS paths that 
$\pi_{\nu}:=(\nu ; 0,1) \in \BB(\mu)$ for every $\nu \in W\mu$, 
which corresponds to the straight line $\pi_{\nu}(t)=t\nu$ for $t \in [0,1]$. 
\end{rem}

Now, we endow $\BB(\mu)$ with a crystal structure as follows. 
First, we define $\wt(\pi) := \pi(1)$ for $\pi \in \BB(\mu)$; 
we know from \cite[Lemma 4.5]{o10} that $\pi(1) \in P$. 
Next, for $\pi \in \BB(\mu)$ and $i \in I$,  we define
\begin{equation} \label{eq:eq105}
H_i^{\pi}(t) := \pair{\pi(t)}{\alpha_{i}^{\vee}} \quad 
\text{for $t \in [0, 1]$}, \qquad 
m_i^{\pi} := \min \{H_{i}^{\pi}(t) \mid t \in [0, 1]\}.
\end{equation}
We know from \cite[Lemma 4.5]{o10} that
\begin{equation}\label{eq:eq106}
\text{\rm all local minimal values of $H_i^{\pi}(t)$ are are integers}; 
\end{equation}
in particular, $m_i^{\pi}$ is a nonpositive integer, 
and $H_i^{\pi}(1) - m_i^{\pi}$ is a nonnegative integer. 
We define $\ti{e}_i\pi$ as follows: 
If $m_i^{\pi} = 0$, then we set $\ti{e}_i \pi := \bzero$. 
If $m_i^{\pi} \leq -1$, then we set
\begin{equation} \label{eq:eq107}
\begin{split}
t_1 &:= \min \{t \in [0, 1] \mid H_i^{\pi}(t) = m_i^{\pi}\}, \\
t_0 &:= \max \{t \in [0, t_1] \mid H_i^{\pi}(t) = m_i^{\pi} + 1\};
\end{split}
\end{equation}
we see by \eqref{eq:eq106} that 
$H_i^{\pi}(t)$ is strictly decreasing on $[t_0, t_1]$. We define
\begin{equation*}
(\ti{e}_i \pi)(t) := 
\begin{cases}
 \pi(t) & \text{if $0 \leq t \leq t_0$}, \\
 r_i(\pi(t) - \pi(t_0)) + \pi(t_0)  & \text{if $t_0 \leq t \leq t_1$}, \\
 \pi(t) + \alpha_i & \text{if $t_1 \leq t \leq 1$};
\end{cases}
\end{equation*}
we know from \cite[\S4]{o10} that $\ti{e}_i\pi \in \BB(\lambda)$.
Similarly, we define $\ti{f}_i \pi$ as follows: 
If $H_i^{\pi}(1) - m_i^{\pi} = 0$, 
then we set $\ti{f}_i \pi := \bzero$. 
If $H_i^{\pi}(1) - m_i^{\pi} \geq 1$, then we set
\begin{equation}\label{eq:eq109}
\begin{split}
t_0 &:= \max \{t \in [0, 1] \mid H_i^{\pi}(t) = m_i^{\pi}\},\\
t_1 &:= \min \{t \in [t_0, 1] \mid H_i^{\pi}(t) = m_i^{\pi} + 1 \};
\end{split}
\end{equation}
we see by \eqref{eq:eq106} that $H_i^{\pi}(t)$ is 
strictly increasing on $[t_0, t_1]$. We define
\begin{equation*}
(\ti{f}_i \pi)(t) := 
\begin{cases}
\pi(t)  & \text{if $0 \leq t \leq t_0$}, \\
r_i(\pi(t) - \pi(t_0)) + \pi(t_0)  & \text{if $t_0 \leq t \leq t_1$}, \\
\pi(t) - \alpha_i  &\text{if $t_1 \leq t \leq 1$};
\end{cases}
\end{equation*}
we know from \cite[\S4]{o10} that $\ti{f}_i\pi \in \BB(\mu)$. 
We set $\ti{e}_i\bzero = \ti{f}_i\bzero := \bzero$ for $i \in I$ by convention.
Finally, for $\pi \in \BB(\mu)$ and $i \in I$, we set 
\begin{equation*}
\ve_i(\pi ) := \max \{n \in \BZ_{\geq 0} \mid \ti{e}_i^{n}\pi \neq \bzero\}, \qquad 
\vp_i(\pi ) := \max \{n \in \BZ_{\geq 0} \mid \ti{f}_i^{n}\pi \neq \bzero\};
\end{equation*}
we know from \cite[\S2]{o10} that 
$\ve_i(\pi) = - m_i^{\pi}$ and $\vp_i(\pi) = H_i^{\pi}(1) - m_i^{\pi}$.

\begin{thm}[{\cite[\S2 and \S4]{o10}}] \label{n8}
The set $\BB(\mu)$, together with the maps 
$\wt : \BB(\mu) \rightarrow P$,  
$\ti{e}_i,\,\ti{f}_i : \BB(\mu) \cup \{\bzero\} \rightarrow \BB(\mu) \cup \{\bzero\}$, $i \in I$, 
and $\ve_i,\,\vp_i : \BB(\mu) \rightarrow \BZ_{\geq 0}$, $i \in I$, becomes a crystal.
\end{thm}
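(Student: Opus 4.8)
The plan is to verify the defining axioms of a crystal for the data $(\BB(\mu), \wt, \ti{e}_i, \ti{f}_i, \ve_i, \vp_i)$. Recall that these require, for all $\pi \in \BB(\mu)$ and $i \in I$: (C1) $\vp_i(\pi) = \ve_i(\pi) + \pair{\wt(\pi)}{\alpha_i^{\vee}}$; (C2) if $\ti{e}_i \pi \neq \bzero$, then $\wt(\ti{e}_i \pi) = \wt(\pi) + \alpha_i$, $\ve_i(\ti{e}_i \pi) = \ve_i(\pi) - 1$, and $\vp_i(\ti{e}_i \pi) = \vp_i(\pi) + 1$; (C3) the analogous statement for $\ti{f}_i$, with $\alpha_i$ replaced by $-\alpha_i$ and the signs on $\ve_i, \vp_i$ reversed; and (C4) for $\pi, \pi' \in \BB(\mu)$, $\pi' = \ti{e}_i \pi$ if and only if $\pi = \ti{f}_i \pi'$. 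Since $\wt(\pi) = \pi(1)$, we have $\pair{\wt(\pi)}{\alpha_i^{\vee}} = H_i^{\pi}(1)$, so, using the stated identities $\ve_i(\pi) = -m_i^{\pi}$ and $\vp_i(\pi) = H_i^{\pi}(1) - m_i^{\pi}$, axiom (C1) is immediate: $\vp_i(\pi) - \ve_i(\pi) = H_i^{\pi}(1) = \pair{\wt(\pi)}{\alpha_i^{\vee}}$.

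The first substantial step, and the main obstacle, is to show that the operators preserve $\BB(\mu)$, i.e., that $\ti{e}_i \pi$ and $\ti{f}_i \pi$, when nonzero, are again LS paths of shape $\mu$. Writing $\pi = (\ud{\nu}\,;\,\ud{\sigma})$ as in Definition~\ref{dfn:LS}, the operator $\ti{e}_i$ leaves $\pi$ on $[0, t_0]$ unchanged, reflects the segment on $[t_0, t_1]$ by $r_i$, and translates the segment on $[t_1, 1]$ by $\alpha_i$. I would first arrange, by refining the defining sequence, that $t_0$ and $t_1$ occur at break points of $\pi$; then the directions on the reflected middle segment become $r_i \nu_u \in W\mu$, while the directions on the final segment are unchanged (translation does not affect the velocity). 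One must then verify that the new sequence of directions is again strictly decreasing in the order of Definition~\ref{20171124(2)} and that the requisite $\sigma$-chains (Definition~\ref{20171124(3)}) persist between consecutive directions. This is the delicate combinatorial content of Littelmann's theorem: it rests on the integrality property~\eqref{eq:eq106}, which ties the cut points $t_0, t_1$ to the rational parameters $\sigma_u$, together with the behaviour of the reflection $r_i$ on positive real roots (other than $\alpha_i$) and on the distance function $\dist$.

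Next I would compute the effect on $\wt$, $\ve_i$, and $\vp_i$ through a direct analysis of the function $H_i^{\ti{e}_i \pi}$. Using $\pair{r_i v}{\alpha_i^{\vee}} = -\pair{v}{\alpha_i^{\vee}}$, one finds $H_i^{\ti{e}_i \pi}(t) = H_i^{\pi}(t)$ on $[0, t_0]$, $H_i^{\ti{e}_i \pi}(t) = 2(m_i^{\pi} + 1) - H_i^{\pi}(t)$ on $[t_0, t_1]$, and $H_i^{\ti{e}_i \pi}(t) = H_i^{\pi}(t) + 2$ on $[t_1, 1]$. Because $t_1$ is the first time $H_i^{\pi}$ attains its global minimum $m_i^{\pi}$ and all local minima are integers by~\eqref{eq:eq106}, we have $H_i^{\pi} \geq m_i^{\pi} + 1$ on $[0, t_0]$; the middle segment increases from $m_i^{\pi} + 1$ to $m_i^{\pi} + 2$, and the last segment is bounded below by $m_i^{\pi} + 2$. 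Hence $m_i^{\ti{e}_i \pi} = m_i^{\pi} + 1$, giving $\ve_i(\ti{e}_i \pi) = \ve_i(\pi) - 1$. Since $(\ti{e}_i \pi)(1) = \pi(1) + \alpha_i$, we get $\wt(\ti{e}_i \pi) = \wt(\pi) + \alpha_i$ and $H_i^{\ti{e}_i \pi}(1) = H_i^{\pi}(1) + 2$, whence $\vp_i(\ti{e}_i \pi) = H_i^{\ti{e}_i \pi}(1) - m_i^{\ti{e}_i \pi} = \vp_i(\pi) + 1$. This is (C2); the verification of (C3) for $\ti{f}_i$ is entirely parallel, with the roles of the global minimum and the endpoint value of $H_i^{\pi}$ interchanged.

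Finally, for (C4) I would show that $\ti{e}_i$ and $\ti{f}_i$ are mutually inverse on their respective domains by matching cut points. If $\pi' = \ti{e}_i \pi \neq \bzero$, then from the description of $H_i^{\pi'}$ above the minimum $m_i^{\pi'} = m_i^{\pi} + 1$ is attained on $[0, t_0]$, its last occurrence being precisely $t_0$, and $H_i^{\pi'}(1) - m_i^{\pi'} = (H_i^{\pi}(1) - m_i^{\pi}) + 1 \geq 1$; one then checks that the points defined by~\eqref{eq:eq109} for $\ti{f}_i \pi'$ coincide with $t_0, t_1$ from~\eqref{eq:eq107} for $\ti{e}_i \pi$. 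Since $r_i^2 = \mathrm{id}$ and the translations by $\alpha_i$ and $-\alpha_i$ cancel, this yields $\ti{f}_i \ti{e}_i \pi = \pi$, and symmetrically $\ti{e}_i \ti{f}_i \pi = \pi$ whenever $\ti{f}_i \pi \neq \bzero$; together these give the equivalence in (C4). The only genuinely difficult point throughout is the stability of the LS-path structure under the root operators addressed in the second paragraph; the remaining verifications reduce to the elementary piecewise-linear computations sketched above.
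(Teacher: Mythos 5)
The paper offers no internal proof of this statement: it is imported wholesale from Littelmann \cite[\S2 and \S4]{o10}, so the only comparison available is with Littelmann's own argument, whose architecture your sketch reproduces faithfully. Your elementary computations are correct: the piecewise description $H_i^{\ti{e}_i\pi}=H_i^{\pi}$ on $[0,t_0]$, $H_i^{\ti{e}_i\pi}=2(m_i^{\pi}+1)-H_i^{\pi}$ on $[t_0,t_1]$, and $H_i^{\ti{e}_i\pi}=H_i^{\pi}+2$ on $[t_1,1]$; the deduction $m_i^{\ti{e}_i\pi}=m_i^{\pi}+1$ via the integrality property \eqref{eq:eq106} (which correctly rules out a non-integral local minimum strictly between $m_i^{\pi}$ and $m_i^{\pi}+1$ on $[0,t_0]$); the resulting shifts of $\wt$, $\ve_i$, $\vp_i$; and the matching of the cut points of \eqref{eq:eq107} and \eqref{eq:eq109} that yields $\ti{f}_i\ti{e}_i\pi=\pi$.

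The genuine gap is the one you flag yourself but do not close: the claim that $\ti{e}_i\pi$ and $\ti{f}_i\pi$, when nonzero, again lie in $\BB(\mu)$. Every other step presupposes this, and you reduce it to ``verifying that the new sequence of directions is strictly decreasing and that the $\sigma$-chains persist'' without carrying out the verification. This is not a routine check: after refining so that $t_0$ and $t_1$ are break points, one must prove that replacing a direction $\nu$ by $r_i\nu$ produces an element adjacent to $\nu$ in the order of Definition~\ref{20171124(2)} (i.e.\ with $\dist=1$) on the correct side, that the comparison with the neighbouring unreflected directions survives, and that the required $\sigma$-chains of Definition~\ref{20171124(3)} can be spliced across the cut points; these are Littelmann's Lemmas 4.1--4.5 and Proposition 4.7 in \cite{o10}, and they constitute essentially all of the content of the theorem. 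Note also a small logical ordering issue: the identities $\ve_i(\pi)=-m_i^{\pi}$ and $\vp_i(\pi)=H_i^{\pi}(1)-m_i^{\pi}$, which you invoke at the outset to get axiom (C1), are themselves consequences of the closure property together with your computation $m_i^{\ti{e}_i\pi}=m_i^{\pi}+1$ (by iterating until the minimum reaches $0$), so they cannot be taken as given before closure is established. As a roadmap your proposal is accurate; as a proof it defers precisely the step that makes the theorem nontrivial.
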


Since $\BB(\mu)$ is a normal crystal, 
$\BB(\mu)$ has the action of the Weyl group $W$ as mentioned in \S\ref{subsec:crystal} 
(see also \cite[Theorem~8.1]{o10}).
We can easily show the next lemma by induction on the length of $w \in W$.
\begin{lem} \label{20141124(5)}
For $w \in W$, we have $S_{w}\pi_{\mu} = \pi_{w\mu}$. 
In particular, $\pi_{\mu}$ is an extremal element of weight $\mu$. 
\end{lem}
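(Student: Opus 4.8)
The plan is to prove the identity $S_w\pi_\mu = \pi_{w\mu}$ by induction on the length $\ell(w)$, so that all the content is concentrated in the single-generator statement: for every $\nu \in W\mu$ and every $i \in I$, one has $S_i\pi_\nu = \pi_{r_i\nu}$. Indeed, for $\ell(w) \ge 1$ I would choose $i \in I$ with $\ell(r_iw) < \ell(w)$, set $w' := r_iw$ (so $w = r_iw'$ and $\ell(w') = \ell(w)-1$), and put $\nu := w'\mu$. Then $S_w = S_iS_{w'}$, and the inductive hypothesis gives $S_{w'}\pi_\mu = \pi_\nu$; applying the one-step statement yields $S_w\pi_\mu = S_i\pi_\nu = \pi_{r_i\nu} = \pi_{r_iw'\mu} = \pi_{w\mu}$. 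The base case $w = e$ is immediate.

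For the one-step statement I would compute directly from the definition of the Kashiwara operators on LS paths. Set $c := \pair{\nu}{\alpha_i^\vee}$; since $\pi_\nu(t) = t\nu$ is a straight line, $H_i^{\pi_\nu}(t) = ct$ is linear. The case $c = 0$ is trivial (both sides equal $\pi_\nu$), so suppose $c \ge 1$. Then $m_i^{\pi_\nu} = 0$, $\ve_i(\pi_\nu) = 0$, and $\vp_i(\pi_\nu) = c$, so that $S_i\pi_\nu = \ti{f}_i^{\,c}\pi_\nu$. I would establish by a secondary induction on $k$ that, for $0 \le k \le c$,
\begin{equation*}
(\ti{f}_i^{\,k}\pi_\nu)(t) =
\begin{cases}
t\,r_i\nu & \text{if } 0 \le t \le k/c, \\
t\nu - k\alpha_i & \text{if } k/c \le t \le 1,
\end{cases}
\end{equation*}
the point being that $H_i^{\ti{f}_i^{\,k}\pi_\nu}$ attains its minimum $-k$ precisely at the single break-point $t = k/c$, so the next application of $\ti{f}_i$ advances the break-point from $k/c$ to $(k+1)/c$, reflects the newly traversed segment (turning a segment of direction $\nu$ into one of direction $r_i\nu$), and shifts the remaining tail by $-\alpha_i$. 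At $k = c$ the second case degenerates and one obtains $(\ti{f}_i^{\,c}\pi_\nu)(t) = t\,r_i\nu = \pi_{r_i\nu}(t)$, as claimed. The case $c \le 0$ is entirely symmetric, with $S_i\pi_\nu = \ti{e}_i^{\,-c}\pi_\nu$ and the two segments interchanged.

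Finally, the ``in particular'' assertion is an immediate consequence: by the identity just proved, $S_w\pi_\mu = \pi_{w\mu}$ is a straight-line path for every $w \in W$, and for any straight line $\pi_\nu$ with $c = \pair{\nu}{\alpha_i^\vee}$ one reads off from $H_i^{\pi_\nu}(t) = ct$ that $\ve_i(\pi_\nu) = -m_i^{\pi_\nu} = 0$ when $c \ge 0$ (hence $\ti{e}_i\pi_\nu = \bzero$) and $\vp_i(\pi_\nu) = H_i^{\pi_\nu}(1) - m_i^{\pi_\nu} = 0$ when $c \le 0$ (hence $\ti{f}_i\pi_\nu = \bzero$); since $\wt(S_w\pi_\mu) = w\mu$, this is exactly the defining annihilation condition for $\pi_\mu$ to be extremal. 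The only genuine obstacle is the bookkeeping in the secondary induction — correctly tracking the break-points $k/c$ and verifying the reflection-and-shift step — but this is a routine, if slightly tedious, computation with piecewise-linear paths.
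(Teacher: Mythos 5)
Your proof is correct and follows exactly the route the paper indicates: induction on $\ell(w)$, reduced to the one-step identity $S_i\pi_\nu=\pi_{r_i\nu}$, which you verify by a direct computation with the root operators on the straight-line path (the paper itself only remarks that the lemma ``can easily be shown by induction on the length of $w$''). Your secondary induction tracking the break-point $k/c$ and the final extremality check are both accurate, so nothing further is needed.
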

%
%
\section{Main results.}
\label{sec:main}
%
%
\subsection{Theorems and corollaries.}
\label{subsec:main}

Throughout this section, we assume that $I = \{1,\,2\}$, and 
\begin{equation} \label{eq:eq07}
A = 
 \begin{pmatrix} 
 2 & -a_1 \\
 -a_2 & 2
\end{pmatrix}, \qquad
\text{where $a_1,\,a_2 \in \BZ_{\ge 2}$ with $a_1a_2 > 4$}; 
\end{equation}
we set
\begin{equation} \label{eq:lam}
\lambda := \Lambda_1 - \Lambda_2.  
\end{equation}

\begin{rem} \label{n12} \mbox{}
\begin{enu}
\item 
If $a_1= 1$ (resp., $a_2= 1$), then $r_2\lambda =\Lambda_2$ 
(resp., $r_1\lambda = -\Lambda_1$) is dominant (resp., antidominant). 
Hence, $\lambda$ does not satisfy the condition \eqref{eq31} in these cases. 

\item 
In the case that $a_1 = a_2 = 2$, i.e., $A$ is of type $A_1^{(1)}$, 
$\lambda=\Lambda_{1}-\Lambda_{2}$ is nothing but the level-zero fundamental weight, 
and all the results mentioned in this subsection have been known as a special case
(the easiest case) of \cite{o5} and \cite{o7}. 
However, it is worth mentioning that 
our proofs for the theorems and corollary below 
are still valid also in this affine case, and 
are slightly different from their proofs.
\end{enu}
\end{rem}

\begin{thm}[will be proved in \S\ref{subsec:prf1}] \label{n7}
For each $b \in \CB(\lambda)$, there exist $i_1,\,\dots,\,i_k \in I$ 
such that $b = \ti{f}_{i_k} \cdots \ti{f}_{i_1} u_{\lambda}$ or 
$b = \ti{e}_{i_k} \cdots \ti{e}_{i_1} u_{\lambda}$. 
In particular, the crystal graph of $\CB(\lambda)$ is connected.
\end{thm}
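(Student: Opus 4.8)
The plan is to show that every element of $\CB(\lambda)$ can be connected to the highest- or lowest-weight-like vector $u_\lambda$ by a sequence of root operators. Since $\CB(\lambda)$ is realized (via Proposition~\ref{20171126(3)}) as the subcrystal of $\CB(\infty) \otimes \CT_\lambda \otimes \CB(-\infty)$ consisting of those $b$ with $b^\ast$ extremal, I would work with this concrete model throughout. The strategy rests on the rank-$2$ hyperbolic geometry: the Weyl group $W = \langle r_1, r_2 \rangle$ is infinite dihedral, and the orbit $W\lambda$ is an explicit discrete set of weights that I would parametrize directly using the recursion coming from $r_1, r_2$ applied to $\lambda = \Lambda_1 - \Lambda_2$.

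First I would record the explicit weights $\pair{\wt(b)}{\alpha_i^\vee}$ and the values of $\ve_i$, $\vp_i$ on elements of $\CB(\lambda)$, exploiting that $\CB(\lambda)$ is normal so that $\ve_i(b) = \max\{n \mid \ti{e}_i^n b \neq \bzero\}$ and similarly for $\vp_i$ (Remark~\ref{20171125(2)}). Second, for a general $b \in \CB(\lambda)$, I would apply a greedy/extremal procedure: repeatedly apply $\ti{e}_i^{\max}$ (for suitably chosen $i$) to raise $b$ toward a weight in the ``positive'' part of the orbit, and dually apply $\ti{f}_i^{\max}$ to lower it. The key structural input is a monotonicity or termination argument showing that, because $W\lambda$ in the rank-$2$ hyperbolic case has weights whose coordinates grow without bound in exactly one direction, this process must terminate at $u_\lambda$ (or at a $W$-translate that is itself reachable from $u_\lambda$). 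Concretely, I expect to track the pair $(\pair{\wt(b)}{\alpha_1^\vee}, \pair{\wt(b)}{\alpha_2^\vee})$ and show that applying the appropriate Kashiwara operator strictly decreases a well-chosen nonnegative integer statistic, such as the ``depth'' $\ve_i(b) + \vp_i(b)$ summed over $i$, or the distance in the crystal graph to $u_\lambda$.

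The main obstacle, I expect, is controlling the interaction between the two tensor factors $\CB(\infty)$ and $\CB(-\infty)$ under the tensor product rule, together with the constraint that $b^\ast$ is extremal. The extremality of $b^\ast$ is what forces $b$ to lie in $\CB(\lambda)$ rather than in the much larger ambient crystal, and I would need to verify that the root operators $\ti{e}_i, \ti{f}_i$ preserve this subcrystal (which they do, since \eqref{eq07} is a subcrystal) while the termination statistic genuinely decreases. The delicate point is that a naive application of $\ti{e}_i^{\max}$ might move $b$ to a weight from which no single operator decreases the statistic; I would resolve this by alternating $\ti{e}_1^{\max}$ and $\ti{e}_2^{\max}$ (exploiting the dihedral structure, so that alternating reflections sweep monotonically through $W\lambda$) and arguing that after finitely many alternations one reaches an extremal element lying in $Wu_\lambda$, whose membership in the connected component of $u_\lambda$ follows from Lemma~\ref{20141124(5)}-type reasoning applied to the $S_w$-action.

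Finally, having shown that every $b$ is connected to some $S_w u_\lambda$, I would close the argument by observing that each $S_w u_\lambda$ is, by definition of the Weyl group action \eqref{Si} as a composition of the $\ti{e}_i, \ti{f}_i$, connected to $u_\lambda$ within the crystal graph. Combining the raising step (which brings $b$ into the $W$-orbit region) with this observation yields the desired expression $b = \ti{f}_{i_k} \cdots \ti{f}_{i_1} u_\lambda$ or $b = \ti{e}_{i_k} \cdots \ti{e}_{i_1} u_\lambda$, and hence the connectedness of the crystal graph.
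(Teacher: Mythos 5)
Your proposal does not close the essential gap, and the route you sketch omits the one idea the paper's proof actually turns on. The paper first establishes a structural dichotomy (Proposition~\ref{n6}): every $b \in \CB(\lambda)$ is of the form $b_1 \otimes t_\lambda \otimes u_{-\infty}$ or $u_\infty \otimes t_\lambda \otimes b_2$. This is precisely the resolution of the ``interaction between the two tensor factors'' that you flag as the main obstacle, and it is obtained from Lemmas~\ref{n4} and \ref{n5}, whose proofs apply the $\ast$-operation, exploit the extremality of $b^{\ast}$, and invoke the hypothesis $a_1, a_2 \ge 2$ (via $\pair{-r_1\lambda}{\alpha_2^{\vee}} = -a_2+1 \le 0$) to rule out the Kashiwara operator acting on the ``wrong'' tensor factor. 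Once the dichotomy is in hand, a short induction on $|\wt(b_1)|$ (resp.\ $|\wt(b_2)|$) shows that $\ti{e}_i$ always strips the first factor (resp.\ $\ti{f}_i$ the third), yielding the pure monomial $\ti{f}_{i_k}\cdots\ti{f}_{i_1}u_\lambda$ or $\ti{e}_{i_k}\cdots\ti{e}_{i_1}u_\lambda$. You acknowledge the obstacle but never supply an argument that overcomes it.

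Concretely, the termination devices you propose do not work. The statistic $\sum_{i}(\ve_i(b)+\vp_i(b))$ is not monotone: applying $\ti{e}_1$ leaves $\ve_1+\vp_1$ unchanged while $\pair{\wt(b)}{\alpha_2^{\vee}}$ drops by $a_2$, and since in a normal crystal $\ve_2+\vp_2 \ge |\pair{\wt(b)}{\alpha_2^{\vee}}|$, that term can grow without bound; and ``distance in the crystal graph to $u_\lambda$'' is circular, since its finiteness is exactly the connectedness being proved. Your endgame is circular in a second way: identifying the terminal element of the alternating $\ti{e}_1^{\max},\ti{e}_2^{\max}$ sweep with an element of $Wu_\lambda$ requires knowing either that every extremal element of $\CB(\lambda)$ lies in $Wu_\lambda$ (Theorem~\ref{n10}\,(2)) or that $\CB(\lambda)_{\lambda}=\{u_\lambda\}$ (Corollary~\ref{n14}); in the paper both are deduced \emph{from} Theorem~\ref{n7} and are not available before it. Finally, even granting all of this, a path that ascends from $b$ to some $S_w u_\lambda$ and then travels back to $u_\lambda$ mixes $\ti{e}$'s and $\ti{f}$'s, so at best it would prove connectedness, not the stronger pure-monomial statement that the paper needs for Corollary~\ref{n14} and Theorem~\ref{n1124(1)}.
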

The next corollary follows immediately from Theorem~\ref{n7}.
\begin{cor} \label{n14}
For every $\mu \in P$, the subset $\CB(\lambda)_{\mu}$ of 
elements of weight $\mu$ in $\CB(\lambda)$ is a finite set. 
In particular, $\CB(\lambda)_{\lambda} = \{u_{\lambda}\}$.
\end{cor}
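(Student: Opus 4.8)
The plan is to deduce Corollary~\ref{n14} directly from the structural statement of Theorem~\ref{n7}, treating the two assertions separately. First I would address finiteness of each weight space $\CB(\lambda)_{\mu}$. By Theorem~\ref{n7}, every $b \in \CB(\lambda)$ is obtained from $u_\lambda$ by applying a monomial $\ti{f}_{i_k}\cdots\ti{f}_{i_1}$ or $\ti{e}_{i_k}\cdots\ti{e}_{i_1}$ with all indices in $I = \{1,2\}$. Since each lowering operator $\ti{f}_i$ decreases the weight by $\alpha_i$ and each raising operator $\ti{e}_i$ increases it by $\alpha_i$, an element lying in $\CB(\lambda)_{\mu}$ must satisfy $\mu = \lambda - \sum_i c_i \alpha_i$ (in the $\ti{f}$-case) or $\mu = \lambda + \sum_i c_i \alpha_i$ (in the $\ti{e}$-case) for nonnegative integers $c_1, c_2$. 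Because $\{\alpha_1, \alpha_2\}$ is linearly independent, the difference $\mu - \lambda$ (or $\lambda - \mu$) determines the multiset of indices used up to order, hence fixes the total number of applications $k = c_1 + c_2$. Thus every element of weight $\mu$ is the image of $u_\lambda$ under one of the finitely many words of that fixed length in the two operators, so $\CB(\lambda)_{\mu}$ is finite.

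Next I would prove $\CB(\lambda)_{\lambda} = \{u_\lambda\}$. The inclusion $\supseteq$ is immediate since $u_\lambda$ has weight $\lambda$ by \eqref{eq30}. For the reverse inclusion, suppose $b \in \CB(\lambda)_{\lambda}$. By Theorem~\ref{n7}, $b = \ti{f}_{i_k}\cdots\ti{f}_{i_1}u_\lambda$ or $b = \ti{e}_{i_k}\cdots\ti{e}_{i_1}u_\lambda$ for some indices. Comparing weights as above gives $\sum_i c_i \alpha_i = 0$, and linear independence of the simple roots forces $c_1 = c_2 = 0$, i.e. $k = 0$, so $b = u_\lambda$. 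The only subtlety is that an a priori shorter word could still produce a net weight change of zero if lowering and raising operators were mixed; but the monomials in Theorem~\ref{n7} are purely of $\ti{f}$-type or purely of $\ti{e}$-type, so no cancellation between the two can occur, and within a single type the coefficients $c_i$ are genuine nonnegative counts.

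The work is almost entirely bookkeeping once Theorem~\ref{n7} is in hand, so I do not anticipate a serious obstacle. The one point requiring a little care is the interplay between the operators possibly sending $b$ to $\bzero$ along the way: the word in Theorem~\ref{n7} is guaranteed to stay inside $\CB(\lambda)$, so each intermediate application is nonzero and the weight computation is valid step by step. I would state this cleanly by noting that for any actual element of $\CB(\lambda)$ the weight is well-defined and equals $\lambda$ minus (or plus) a nonnegative integer combination of simple roots, and then invoke linear independence. No additional input beyond Theorem~\ref{n7} and the basic weight-shift property $\wt(\ti{f}_i b) = \wt(b) - \alpha_i$, $\wt(\ti{e}_i b) = \wt(b) + \alpha_i$ is needed.
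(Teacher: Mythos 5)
Your argument is correct and is exactly the reasoning the paper intends: it states only that the corollary ``follows immediately from Theorem~\ref{n7},'' and your bookkeeping (linear independence of $\alpha_1,\alpha_2$ fixes the exponents $c_1,c_2$, hence only finitely many words of the forced length, and $c_1=c_2=0$ when $\mu=\lambda$) is precisely the omitted verification. No discrepancy with the paper's approach.
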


Recall from the comment after \eqref{eq30} 
that $u_\lambda$ is an extremal element of weight $\lambda$.

\begin{thm}[will be proved in \S\ref{subsec:prf2}] \label{n10} \mbox{}
\begin{enu}
\item Let $x, y \in W$. Then, $S_x u_\lambda = S_y u_\lambda$ 
if and only if $x\lambda = y\lambda$.

\item If $b \in \CB(\lambda)$ is extremal, 
then there exists  $w \in W$ such that $b = S_wu_\lambda$.
\end{enu}
\end{thm}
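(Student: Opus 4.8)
The plan is to exploit the rank-2 setup, where $W = \langle r_1, r_2 \rangle$ is an infinite dihedral group (since $a_1 a_2 > 4$), so every element of $W$ has a unique reduced expression as an alternating word in $r_1$ and $r_2$. The $W$-orbit $W\lambda$ is therefore an infinite ``zigzag'' of weights indexed by such alternating words, and the two statements should follow from a careful tracking of how $S_w$ acts on $u_\lambda$ along this zigzag.

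For part~(1), the nontrivial direction is that $x\lambda = y\lambda$ implies $S_x u_\lambda = S_y u_\lambda$, since $\wt(S_w b) = w\,\wt(b)$ already gives the forward implication. First I would reduce to computing the stabilizer of $\lambda$ in $W$: because $\lambda = \Lambda_1 - \Lambda_2$ lies on no wall (it satisfies~\eqref{eq31}, so no $W$-translate is dominant or antidominant), the stabilizer of $\lambda$ is trivial, and hence $x\lambda = y\lambda$ forces $x = y$ outright, making part~(1) nearly immediate. I would verify the triviality of the stabilizer directly from $\pair{\lambda}{\alpha_1^\vee} = 1$ and $\pair{\lambda}{\alpha_2^\vee} = -1$ by an induction on reduced length, checking that no alternating word in $r_1, r_2$ other than the identity fixes $\lambda$.

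For part~(2), the plan is to use the explicit realization of $\CB(\lambda)$ from Proposition~\ref{20171126(3)} together with the connectedness already established in Theorem~\ref{n7} and the finiteness of weight spaces from Corollary~\ref{n14}. The key idea is that an extremal element $b$ of weight $\mu$ must have $\mu \in W\lambda$: since $b$ is extremal, the $W$-action $S_w$ moves $b$ through a family of elements $S_w b$ whose weights $w\mu$ are precisely the $W$-orbit of $\mu$, and the defining extremality condition forces each $S_w b$ to be annihilated by the appropriate $\ti{e}_i$ or $\ti{f}_i$; one then argues that this family, living in finite weight spaces, must coincide with the orbit $\{S_w u_\lambda\}$. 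Concretely I would show that any extremal $b$ generates, under the $\ti{e}_i, \ti{f}_i$, the same connected piece of the crystal as $u_\lambda$, and then match $b$ to a specific $S_w u_\lambda$ by comparing weights, using that distinct $S_w u_\lambda$ have distinct weights by part~(1).

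The main obstacle I anticipate is part~(2): establishing that an arbitrary extremal element actually lies in the $W$-orbit of $u_\lambda$, rather than in some other extremal ``branch'' of the crystal. In a general normal crystal there can be extremal elements outside the orbit of a given one, so the argument must genuinely use the special structure of $\CB(\lambda)$ for this particular $\lambda$ in rank~2 — presumably that the orbit $\{S_w u_\lambda \mid w \in W\}$ exhausts all weights in $W\lambda$ with one element each, and that extremality pins $b$ down to a single weight-$\mu$ element with $\mu \in W\lambda$. Handling the $\ast$-operator realization carefully (via~\eqref{eq26} and Lemma~\ref{n11}) so as to control which elements $b^\ast$ are extremal will be the technical heart, and I would expect to invoke the uniqueness of the $\ti{f}_i^k u_\infty$ and $\ti{e}_i^k u_{-\infty}$ strings to rule out spurious extremal elements.
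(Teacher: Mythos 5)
Your treatment of part (1) is correct but takes a different route from the paper. The paper does not compute the stabilizer of $\lambda$ at all: from $x\lambda=y\lambda$ it observes that $S_{x^{-1}}S_{y}u_{\lambda}$ has weight $x^{-1}y\lambda=\lambda$ and invokes Corollary~\ref{n14} ($\CB(\lambda)_{\lambda}=\{u_{\lambda}\}$) to conclude it equals $u_{\lambda}$. Your stabilizer computation does work in this rank-2 situation, but be careful with the justification: ``$\lambda$ lies on no wall, hence trivial stabilizer'' is not a valid inference for weights outside the Tits cone (which is exactly the case here, by \eqref{eq31}). You need the two extra observations that (i) no reflection $r_{\beta}$ fixes $\lambda$, because that would force some element of $W\lambda$ to be a multiple of a single fundamental weight, contradicting \eqref{eq31}, and (ii) a nontrivial power of $r_{1}r_{2}$ has no nonzero fixed vector since its eigenvalues on the root space are $t,t^{-1}$ with $t>1$ when $a_{1}a_{2}>4$. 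The paper's route via Corollary~\ref{n14} is shorter and sidesteps all of this.

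For part (2) there is a genuine gap. Everything in your plan reduces to the single claim that an extremal $b\in\CB(\lambda)$ has $\wt(b)\in W\lambda$ (granting that, $S_{w^{-1}}b$ has weight $\lambda$ for suitable $w$ and Corollary~\ref{n14} finishes the argument), but you never establish this claim; ``comparing weights'' with the orbit $\{S_{w}u_{\lambda}\}$ cannot do it, since a priori an extremal element could sit at a weight outside $W\lambda$, and two elements of equal weight need not coincide except at weight $\lambda$ itself. The paper's mechanism, which your sketch is missing, is an induction on $|\wt(b_{1})|$ using Proposition~\ref{n6} and Lemma~\ref{n4}: write $b=b_{1}\otimes t_{\lambda}\otimes u_{-\infty}$ (or the mirror form); if $b_{1}\neq u_{\infty}$, pick $i$ with $\ti{e}_{i}b_{1}\neq\bzero$, so $\ti{e}_{i}b\neq\bzero$; extremality then forces $\pair{\wt(b)}{\alpha_{i}^{\vee}}<0$ and $\vp_{i}(b)=0$, whence $S_{i}b=\ti{e}_{i}^{\,\ve_{i}(b)}b=(\ti{e}_{i}^{\,\ve_{i}(b)}b_{1})\otimes t_{\lambda}\otimes u_{-\infty}$ by Lemma~\ref{n4} and the tensor product rule. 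This $S_{i}b$ is again extremal, of the same special form, with strictly smaller $|\wt(\cdot)|$, so the induction terminates at $u_{\lambda}$ and produces the required $w$. Without this descent step (or a substitute for it), your outline does not rule out the ``spurious extremal branches'' that you yourself identify as the main obstacle.
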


\begin{thm}[will be proved in \S\ref{subsec:prf3}] \label{n1124(1)}
The extremal weight module $V(\lambda)$ 
is irreducible. 
\end{thm}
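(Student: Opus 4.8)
The plan is to show that every nonzero $U_{\q}(\Fg)$-submodule $M$ of $V(\lambda)$ coincides with $V(\lambda)$. Since $V(\lambda)$ is by definition generated by the extremal weight vector $v_\lambda = G(u_\lambda)$, it suffices to prove that $v_\lambda \in M$. Because $M$ is stable under the torus action, it decomposes as the direct sum of its weight spaces $M_\mu = M \cap V(\lambda)_\mu$; by Corollary~\ref{n14} each $V(\lambda)_\mu$ is finite-dimensional, with basis $\{G(b) \mid b \in \CB(\lambda)_\mu\}$, so every element of $M$ is a finite $\BC(\q)$-linear combination of global basis vectors. This finiteness, which is special to our hyperbolic $\lambda$, will be used repeatedly.

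First I would pass to the crystal lattice. Set $N := M \cap \CL(\lambda)$ and let $\ol{N} := (N + \q\CL(\lambda))/\q\CL(\lambda) \subseteq \CL(\lambda)/\q\CL(\lambda) = \bigoplus_{b \in \CB(\lambda)} \BC b$. Scaling a nonzero element of $M$ by a suitable power of $\q$ shows $\ol{N} \neq 0$, and one checks that $\ol{N}$ is stable under the Kashiwara operators $\ti{e}_i,\,\ti{f}_i$ on $\CL(\lambda)/\q\CL(\lambda)$ (these are induced from the $U_{\q}(\Fg)$-action, under which $M$ is stable). The goal then becomes to show $u_\lambda \in \ol{N}$. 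Once this is known, the connectedness of the crystal graph (Theorem~\ref{n7}) — together with the fact that $\ti{e}_i,\,\ti{f}_i$ send each basis vector $b$ to a single basis vector or to $\bzero$ — forces $\ol{N} = \bigoplus_{b} \BC b$; a Nakayama-type argument using the finiteness of each weight space then lifts this back to $N + \q\CL(\lambda) = \CL(\lambda)$, and finally to $v_\lambda \in M$, whence $M = V(\lambda)$.

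To locate $u_\lambda$ inside $\ol{N}$ I would exploit the two-ray structure furnished by Theorem~\ref{n7}: every $b \in \CB(\lambda)$ is obtained from $u_\lambda$ by a chain of $\ti{f}_i$'s or by a chain of $\ti{e}_i$'s, so its weight lies in $\lambda - Q_+$ or in $\lambda + Q_+$. Choosing a $\BZ$-linear functional $\ell$ strictly positive on $\alpha_1,\,\alpha_2$, the quantity $\ell(\wt b)$ equals $\ell(\lambda)$ only at $b = u_\lambda$ (here $\CB(\lambda)_\lambda = \{u_\lambda\}$ from Corollary~\ref{n14} is crucial), is $<\ell(\lambda)$ on the lower ray, and is $>\ell(\lambda)$ on the upper ray. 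Given $0 \neq \ol{v} = \sum_b \ol{c}_b\, b \in \ol{N}$ with more than one nonzero term, I would locate a term that is extreme for $\ell$ and apply a suitable Kashiwara operator annihilating it while keeping the result nonzero, thereby strictly decreasing the number of terms; iterating reduces to the case of a single basis vector, which by connectedness propagates to all of $\CB(\lambda)$, in particular to $u_\lambda$.

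The main obstacle is precisely this last inductive step: since $\ti{e}_i,\,\ti{f}_i$ may send several distinct basis vectors to the same target, one must rule out cancellation when reducing the number of terms, and one must also verify that $\ol{N}$ is genuinely $\ti{e}_i,\,\ti{f}_i$-stable (this is automatic for modules in category $\mathcal{O}_{\mathrm{int}}$, but $V(\lambda)$ is not of highest weight, so it requires justification). Both points are controlled by the explicit combinatorics of the rank-$2$ crystal $\CB(\lambda)$: the infinite dihedral Weyl group forces the crystal graph to have a ladder-like shape in which the $i=1$ and $i=2$ strings interlock simply, and a leading-term analysis (at the top of an $i$-string) then isolates a single vector without cancellation. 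I expect the bookkeeping needed to make the non-cancellation rigorous, rather than any conceptual difficulty, to be the most laborious part of the argument.
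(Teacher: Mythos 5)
Your overall strategy (pass to $N = M \cap \CL(\lambda)$, work in $\CL(\lambda)/\q\CL(\lambda)$, and use Theorem~\ref{n7} and Corollary~\ref{n14} to force $u_\lambda$ into the picture) is the same as the paper's, and the scaffolding around it is sound. But the step that actually does the work --- showing $u_\lambda \in \ol{N}$ --- is where your proposal has a genuine gap, and the mechanism you sketch for it does not function as stated. Since $N$ is graded, you may as well take $\ol{v} = \sum_k \ol{c}_k b_k$ to be a weight vector; but then all the $b_k$ have the \emph{same} weight, so a linear functional $\ell$ on the weight lattice cannot single out an ``extreme term'' of $\ol{v}$, and there is in any case no argument for why a single application of $\ti{e}_i$ or $\ti{f}_i$ should annihilate some terms but not others. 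Meanwhile the obstacle you flag as the main one --- possible cancellation among the images of distinct $b_k$ --- is not an obstacle at all: on the set where $\ti{e}_i$ is nonzero it is injective (since $\ti{f}_i\ti{e}_i b = b$), so the surviving terms remain distinct basis vectors and hence linearly independent automatically.

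The fix, which is exactly what the paper does, is to apply extremality to the \emph{weights in the support of the submodule} rather than to the terms of a single element: among all $\mu \in \lambda - Q_+$ with $M_\mu \neq \{0\}$, choose $\mu$ closest to $\lambda$. If $\mu \neq \lambda$, minimality forces $E_i u = 0$ for every $u \in M_\mu$ and $i \in I$ (the weight $\mu + \alpha_i$ either lies strictly closer to $\lambda$ in $\lambda - Q_+$, or lies outside $\lambda \pm Q_+$ and hence is not a weight of $V(\lambda)$ by Theorem~\ref{n7}); hence $\ti{e}_i u = 0$ by the very definition of $\ti{e}_i$ --- which also sidesteps your other worry, the stability of $\ol{N}$ under the Kashiwara operators, since the paper never needs it in full. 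Writing a suitably normalized $u$ as $c_1 b_1 + \cdots + c_m b_m$ modulo $\q\CL(\lambda)$ with $c_1 \neq 0$, one gets $\sum_k c_k \ti{e}_i b_k = 0$ in $\CL(\lambda)/\q\CL(\lambda)$; but Theorem~\ref{n7} provides an $i$ with $\ti{e}_i b_1 \neq \bzero$, and the injectivity noted above makes the nonzero $\ti{e}_i b_k$ linearly independent, a contradiction. Finally, once $M_\lambda \neq \{0\}$ is known, you are done immediately by Corollary~\ref{n14}: $\dim V(\lambda)_\lambda = 1$ gives $v_\lambda \in M$, so the connectedness propagation and the Nakayama-type lifting at the end of your sketch are unnecessary.
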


\begin{thm}[will be proved in \S\ref{sec:prf4}] \label{20171126(4)}
There exists an isomorphism 
$\CB(\lambda) \stackrel{\sim}{\rightarrow} \BB(\lambda)$ 
of crystals that sends $u_\lambda$ to $\pi_\lambda$.
\end{thm}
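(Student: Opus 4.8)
The plan is to construct the crystal isomorphism $\CB(\lambda) \xrightarrow{\sim} \BB(\lambda)$ by exploiting the results already proved about $\CB(\lambda)$, namely its connectedness (Theorem~\ref{n7}), the finiteness of its weight spaces with $\CB(\lambda)_\lambda = \{u_\lambda\}$ (Corollary~\ref{n14}), and the classification of its extremal elements (Theorem~\ref{n10}). First I would establish the analogous structural properties for $\BB(\lambda)$: that it is connected, that $\pi_\lambda$ is the unique element of weight $\lambda$, and that its extremal elements are exactly the straight-line paths $\pi_{w\lambda} = S_w\pi_\lambda$ for $w \in W$ (using Lemma~\ref{20141124(5)}). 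Since both crystals are normal and generated from a single extremal element of weight $\lambda$ by the Kashiwara operators, the natural strategy is to define a map sending $u_\lambda \mapsto \pi_\lambda$ and extend it along sequences of $\ti e_i$'s and $\ti f_i$'s, then check this extension is well-defined and bijective.

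The key technical device I expect to use is the general criterion for when two normal crystals generated by a single extremal element are isomorphic. Concretely, I would invoke (or reprove in this setting) a result of the type: if $\CB$ and $\CB'$ are normal crystals, $b_0 \in \CB$ and $b_0' \in \CB'$ are extremal of the same weight $\lambda$, both crystals are connected, and the stabilizers $\{w \in W \mid S_w b_0 = b_0\}$ and $\{w \in W \mid S_w b_0' = b_0'\}$ coincide (which by Theorem~\ref{n10}(1) and the $\BB(\lambda)$-analogue is just $\{w \mid w\lambda = \lambda\}$ on both sides), together with matching $\ve_i, \vp_i$ data at corresponding extremal elements, then there is a unique crystal isomorphism matching $b_0$ and $b_0'$. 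The connectedness from Theorem~\ref{n7} is precisely what guarantees that defining the map on $u_\lambda$ determines it everywhere, and the finiteness of weight spaces rules out pathological collisions.

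Carrying this out, I would proceed as follows. Step one: verify the structural facts for $\BB(\lambda)$ parallel to those established for $\CB(\lambda)$, most importantly connectedness and the extremal-element classification; this likely reuses the rank-$2$ combinatorial description of $\BB(\lambda)$ from \cite[Theorem 4.1.2]{o1}. Step two: define the candidate map $\Phi$ by $\Phi(\ti f_{i_k}\cdots \ti f_{i_1} u_\lambda) := \ti f_{i_k}\cdots \ti f_{i_1}\pi_\lambda$ and $\Phi(\ti e_{i_k}\cdots \ti e_{i_1} u_\lambda) := \ti e_{i_k}\cdots \ti e_{i_1}\pi_\lambda$, which covers all of $\CB(\lambda)$ by Theorem~\ref{n7}. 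Step three: prove $\Phi$ is well-defined, meaning that whenever two operator words give the same element of $\CB(\lambda)$ they give the same element of $\BB(\lambda)$; here the extremal-element analysis and the comparison of the two stabilizer subgroups are essential, since ambiguity in representing an element as a word can only arise through the Weyl-group action on extremal elements. Step four: prove $\Phi$ is a crystal morphism (commuting with $\ti e_i, \ti f_i$ and preserving $\wt, \ve_i, \vp_i$), which is essentially built into the definition once well-definedness holds, and prove surjectivity (using connectedness of $\BB(\lambda)$) and injectivity (using finiteness and the matching of weights).

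The hard part will be the well-definedness in step three, that is, showing that the two crystals have the \emph{same} combinatorial relations among Kashiwara-operator words and not merely isomorphic generating data. Connectedness guarantees that a well-defined morphism is surjective, but the delicate point is ruling out that $\CB(\lambda)$ collapses two words that $\BB(\lambda)$ keeps distinct or vice versa. I expect to reduce this to a local statement about extremal elements: any relation between operator words can be transported, via the normality and the $S_w$-action, to a relation at an extremal element, where Theorem~\ref{n10} on the $\CB$ side and its $\BB$-analogue give identical answers because both reduce to the single condition $x\lambda = y\lambda$ in $W$. Rather than treat the two crystals ad hoc, I anticipate it is cleaner to formulate and prove a general isomorphism criterion for normal crystals of this ``single extremal generator'' type in the more general setting promised at the end of the introduction, and then apply it to the pair $(\CB(\lambda), u_\lambda)$ and $(\BB(\lambda), \pi_\lambda)$; this is the approach I would pursue.
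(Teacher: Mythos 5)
Your high-level skeleton matches the paper's: the paper also reduces Theorem~\ref{20171126(4)} to a general criterion (Theorem~\ref{thm:isom2}) for an arbitrary symmetrizable $\Fg$, whose hypotheses are exactly connectedness plus $\CB_0(\mu)_\mu=\{u_\mu\}$ and $\BB_0(\mu)_\mu=\{\pi_\mu\}$ (the latter two imported for $\lambda$ from Corollary~\ref{n14} and \cite{o1}), and the isomorphism is defined precisely as you propose, by $Xu_\lambda\mapsto X\pi_\lambda$ for monomials $X$ in Kashiwara operators. You have also correctly isolated the hard point: one must show $Xu_\mu\ne\bzero \Leftrightarrow X\pi_\mu\ne\bzero$ and $X_1u_\mu=X_2u_\mu \Leftrightarrow X_1\pi_\mu=X_2\pi_\mu$.

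The gap is in your proposed mechanism for that step. You claim that ``any relation between operator words can be transported, via normality and the $S_w$-action, to a relation at an extremal element,'' so that Theorem~\ref{n10} and its $\BB$-analogue settle everything. But a general element $\ti{f}_{i_k}\cdots\ti{f}_{i_1}u_\lambda$ is not extremal, and there is no Weyl-group operation that moves it to one; normality and $S_w$ give you no handle on relations occurring at non-extremal vertices of the crystal graph. The device the paper uses to make your idea literally true is the \emph{similarity} maps $\Sigma_m:\CB_0(\mu)\hookrightarrow\CB_0(\mu)^{\otimes m}$ and $\Sigma_m:\BB(\mu)\hookrightarrow\BB(\mu)^{\otimes m}$ (Propositions~\ref{prop:s1} and \ref{prop:s2}, built from $V(m\mu)\to V(\mu)^{\otimes m}$ on one side and $\BB(m\mu)\subset\BB(\mu)^{\ast m}$ on the other): after dilating by a suitable $m$, every element along an operator word \emph{becomes} a pure tensor of extremal elements $S_{w}u_\mu$ (resp.\ $S_{w}\pi_\mu$) (Lemmas~\ref{lem:ext1}--\ref{lem:ext3}), and since the tensor-product rule is governed by the $\ve_i,\vp_i,\pair{\wt(\cdot)}{\alpha_i^\vee}$ data at extremal elements, which coincide in the two crystals by \eqref{eq:ten}, the two crystals evolve identically. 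Without $\Sigma_m$ (or some equally strong input such as the global basis argument of \cite{o5}), your step three does not go through; note also that the paper never needs the full classification of extremal elements (Theorem~\ref{n10}) nor finiteness of all weight spaces for this theorem, only the single weight space at $\mu$, which enters exactly once, in Lemma~\ref{lem:equal}.
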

%
%
\subsection{Proof of Theorem~\ref{n7}.}
\label{subsec:prf1}

\begin{lem} \label{n4}
Let $i \in I$ and $b \in \CB(\lambda)$ be such that $\ti{e}_i b \neq \bzero$.
If $b$ is of the form: $b = b_1 \otimes t_\lambda \otimes u_{-\infty}$ 
with $b_1 \neq u_\infty$, then 
$\ti{e}_i b = \ti{e}_i b_1 \otimes t_\lambda \otimes u_{-\infty}$.
\end{lem}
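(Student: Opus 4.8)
\medskip

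The plan is to analyze the action of $\ti{e}_i$ on $b = b_1 \otimes t_\lambda \otimes u_{-\infty}$ directly from the tensor product rule, and then handle the sole obstruction — the possibility that $\ti{e}_i$ acts on the rightmost factor instead of on $b_1$ — by ruling it out using the hypothesis $b_1 \neq u_\infty$. Recall the tensor rule: for a two-fold tensor $c_1 \otimes c_2$, we have $\ti{e}_i(c_1 \otimes c_2) = \ti{e}_i c_1 \otimes c_2$ if $\vp_i(c_1) \ge \ve_i(c_2)$, and $= c_1 \otimes \ti{e}_i c_2$ otherwise. First I would group $b$ as $(b_1 \otimes t_\lambda) \otimes u_{-\infty}$ and observe that $\ve_i(u_{-\infty}) = 0$ (since $\ti{e}_i u_{-\infty}$ need not vanish, but the relevant quantity is $\ve_i$, which equals $\vp_i(u_{-\infty}) - \pair{\wt(u_{-\infty})}{\alpha_i^\vee} = 0 - 0 = 0$ because $u_{-\infty}$ has weight $0$ and $\vp_i(u_{-\infty}) = 0$). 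Hence for the outer tensor, $\vp_i(b_1 \otimes t_\lambda) \ge \ve_i(u_{-\infty}) = 0$ holds automatically, so $\ti{e}_i b = \ti{e}_i(b_1 \otimes t_\lambda) \otimes u_{-\infty}$.

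\medskip

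Next I would compute $\ti{e}_i(b_1 \otimes t_\lambda)$ using the tensor rule again. Since $t_\lambda$ satisfies $\ve_i(t_\lambda) = -\infty$, the inequality $\vp_i(b_1) \ge \ve_i(t_\lambda) = -\infty$ is trivially satisfied whenever $\vp_i(b_1)$ is finite, which forces $\ti{e}_i(b_1 \otimes t_\lambda) = \ti{e}_i b_1 \otimes t_\lambda$ — \emph{provided} $\ti{e}_i b_1 \neq \bzero$. Combining the two steps gives $\ti{e}_i b = \ti{e}_i b_1 \otimes t_\lambda \otimes u_{-\infty}$, which is exactly the claim.

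\medskip

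The main obstacle, and the only place the hypothesis $b_1 \neq u_\infty$ enters, is showing $\ti{e}_i b_1 \neq \bzero$, i.e.\ that $\ve_i(b_1) \geq 1$ for the relevant $i$. Here I would use the assumption $\ti{e}_i b \neq \bzero$ together with what the tensor rule forces: since $\ti{e}_i b = \ti{e}_i b_1 \otimes t_\lambda \otimes u_{-\infty}$ is nonzero, we need $\ti{e}_i b_1 \neq \bzero$. The subtlety is that a priori $\ti{e}_i$ could have acted on $t_\lambda$ or $u_{-\infty}$; but the $-\infty$ values at $t_\lambda$ block any action there, and the computation above shows the action must land on $b_1$. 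The hypothesis $b_1 \neq u_\infty$ is the structural fact guaranteeing $b_1$ lives deeper in $\CB(\infty)$ so that it can support a raising operation in some direction, but the clean logical route is: the nonvanishing of $\ti{e}_i b$ plus the forced factorization already pins the action onto $b_1$, so $\ti{e}_i b_1 \neq \bzero$ follows without separate case analysis. I expect the delicate point to be verifying carefully that the $-\infty$ conventions for $t_\lambda$ (and the weight-zero, $\ve_i = 0$ status of $u_{-\infty}$) genuinely force the signs in both applications of the tensor rule, rather than merely making one case ``generic.''
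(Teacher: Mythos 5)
There is a genuine gap, and it sits at the very center of your argument. You claim that ``$\vp_i(b_1 \otimes t_\lambda) \ge \ve_i(u_{-\infty}) = 0$ holds automatically,'' so that the tensor product rule necessarily routes $\ti{e}_i$ onto the first two factors. This is false. The crystal $\CB(\infty) \otimes \CT_\lambda$ is \emph{not} normal, and $\vp_i$ on it is given by the formula
\begin{equation*}
\vp_i(b_1 \otimes t_\lambda) \;=\; \max\bigl\{\vp_i(t_\lambda),\, \vp_i(b_1) + \pair{\lambda}{\alpha_i^{\vee}}\bigr\}
\;=\; \ve_i(b_1) + \pair{\wt(b_1)+\lambda}{\alpha_i^{\vee}},
\end{equation*}
which can be negative. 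For instance, with $b_1 = \ti{f}_2 u_\infty$ and $i=2$ one gets $\ve_2(b_1)=1$, $\pair{\wt(b_1)+\lambda}{\alpha_2^{\vee}} = -2-1 = -3$, hence $\vp_2(b_1 \otimes t_\lambda) = -2 < \ve_2(u_{-\infty}) = 0$, and the tensor product rule sends $\ti{e}_2$ to the \emph{third} factor. Since $\ti{e}_i u_{-\infty} \neq \bzero$ in $\CB(-\infty)$ (it is the unique element of weight $\alpha_i$ there), the alternative $\ti{e}_i b = b_1 \otimes t_\lambda \otimes \ti{e}_i u_{-\infty}$ is a genuine a priori possibility; ruling it out is the entire content of the lemma, not a formality. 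Your proposal never invokes the hypothesis $b \in \CB(\lambda)$ --- i.e.\ that $b^{\ast}$ is extremal --- and without that hypothesis the statement is simply false, as the example above indicates.

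For comparison, the paper's proof is by contradiction: assuming $\ti{e}_i b = b_1 \otimes t_\lambda \otimes \ti{e}_i u_{-\infty}$, it applies the $\ast$-operation (using \eqref{eq26} and Lemma~\ref{n11}), observes that $\ti{f}_i(\ti{e}_i b)^{\ast} \neq \bzero$ by the tensor product rule, and plays this off against the extremality of $(\ti{e}_i b)^{\ast}$, which has weight $-\lambda$. Extremality first forces $i=2$, then yields $\ve_1(b_1^{\ast}) \le 1$ and $\ve_2(b_1^{\ast}) = 0$; the hypothesis $b_1 \neq u_\infty$ enters exactly here, to conclude $\ve_1(b_1^{\ast}) = 1$ (so $b_1^{\ast} = \ti{f}_1 b_1'$ with $\ti{e}_1 b_1' = \bzero$). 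Applying $S_1$ and using $\pair{-r_1\lambda}{\alpha_2^{\vee}} = -a_2+1 \le 0$ then produces the contradiction $\ti{f}_2 S_1(\ti{e}_2 b)^{\ast} \neq \bzero$ versus $= \bzero$. Note in particular that your reading of where $b_1 \neq u_\infty$ is needed (to ensure $\ti{e}_i b_1 \neq \bzero$ ``in some direction'') does not match its actual role, and the ``clean logical route'' you describe is circular: the forced factorization you rely on is precisely what has to be proved.
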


\begin{proof} 
Suppose, for a contradiction, that 
$\ti{e}_i b = b_1 \otimes t_\lambda \otimes \ti{e}_i u_{-\infty}$. 
By \eqref{eq26} and Lemma \ref{n11}, we see that 
\begin{equation} \label{eq08}
(\ti{e}_i b)^{\ast} = b_1^{\ast} \otimes t_{-\lambda-\wt(b_1)-\alpha_i} \otimes \ti{e}_i u_{-\infty}.
\end{equation}
Because the third tensor factor $\ti{e}_i u_{-\infty}$ above 
satisfies that $\ti{f}_{i}(\ti{e}_i u_{-\infty}) = u_{-\infty} \ne \bzero$ 
(or equivalently, $\vp_{i}(\ti{e}_i u_{-\infty}) = 1 \ne 0$), 
it follows from the tensor product rule of crystals that 
$\ti{f}_i(\ti{e}_i b)^\ast \neq \bzero$.
Since $\ti{e}_i b \in \CB(\lambda)$, it follows that 
$(\ti{e}_i b)^\ast$ is an extremal element of weight $-\lambda$ (see Remark~\ref{rem:wt}). 
Hence we see that $\ti{f}_1(\ti{e}_1 b)^\ast = \bzero$, which implies that $i = 2$, and 
$(\ti{e}_2 b)^{\ast} = b_1^{\ast} \otimes t_{-\lambda-\wt(b_1)-\alpha_2} \otimes \ti{e}_2 u_{-\infty}$.
Since $(\ti{e}_2 b)^{\ast}$ is an extremal element of weight $-\lambda$, 
we see that $\ti{e}_1^2(\ti{e}_2 b)^{\ast} = \ti{e}_1S_1(\ti{e}_2 b)^{\ast} = \bzero$ 
and $\ti{e}_2(\ti{e}_2 b)^{\ast} = \bzero$. 
By these equalities and the tensor product rule of crystals, 
we have $\ve_1(b_1^\ast) \leq \ve_1((\ti{e}_2 b)^{\ast}) = 1$ and 
$\ve_2(b_1^\ast) \leq \ve_2((\ti{e}_2 b)^\ast) = 0$. Thus we get $\ve_2(b_1^\ast) = 0$. 
Moreover, since $b_1 \neq u_\infty$ by assumption, we obtain $\ve_1(b_1^{\ast}) = 1$. 
Thus, $b_1^\ast$ is of the form : $b_1^\ast = \ti{f}_1 b_1'$ 
for some $b_1' \in \CB(\infty)$ such that $\ti{e}_1 b_1' = \bzero$. 
Because $\ti{e}_1^2(\ti{e}_2b)^\ast = \ti{e}_1S_1(\ti{e}_2b)^\ast = \bzero$ as seen above, 
we see by the tensor product rule of crystals that
\begin{equation*}
S_1(\ti{e}_2b)^{\ast} = \ti{e}_1(\ti{e}_2 b)^{\ast} = 
\ti{e}_1b_1^{\ast} \otimes t_{-\lambda-\wt(b_1)-\alpha_2} \otimes \ti{e}_2 u_{-\infty} = 
b_1' \otimes t_{-\lambda-\wt(b_1)-\alpha_2} \otimes \ti{e}_2 u_{-\infty};
\end{equation*}
note that $\ti{f}_2S_1(\ti{e}_2b)^{\ast} \neq \bzero$ 
by the tensor product rule of crystals. 
However, since $S_1(\ti{e}_2b)^{\ast}$ is an extremal element of weight $-r_1\lambda$, and 
since $\pair{-r_1\lambda}{\alpha_2^{\vee}} = -a_2 + 1 \leq 0$, 
we have $\ti{f}_2(S_1(\ti{e}_2b)^{\ast}) = \bzero$, which is a contradiction. 
Thus we have proved the lemma.
\end{proof}

The proof of the next lemma is similar to the proof of Lemma \ref{n4}.

\begin{lem}\label{n5}
Let $i \in I$ and $b \in \CB(\lambda)$ be such that 
$\ti{f}_i b \neq \bzero$. If $b$ is of the form: 
$b = u_\infty \otimes t_\lambda \otimes b_2$ with 
$b_2 \neq u_{-\infty}$, then 
$\ti{f}_i b = u_\infty \otimes t_\lambda \otimes \ti{f}_i b_2$.
\end{lem}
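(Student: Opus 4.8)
The plan is to mirror the proof of Lemma~\ref{n4}, interchanging the roles of $\ti{e}_i$ and $\ti{f}_i$, of the first and third tensor factors, and (because of the asymmetry $\pair{\lambda}{\alpha_1^{\vee}} = 1$, $\pair{\lambda}{\alpha_2^{\vee}} = -1$) of the indices $1$ and $2$. Arguing by contradiction, I would assume that $\ti{f}_i$ acts on the first factor, i.e. that $\ti{f}_i b = \ti{f}_i u_\infty \otimes t_\lambda \otimes b_2$, and aim to contradict the extremality of the relevant $\ast$-image.

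First I would apply the $\ast$-operator. By \eqref{eq26} and Lemma~\ref{n11} (which gives $(\ti{f}_i u_\infty)^\ast = \ti{f}_i u_\infty$), we obtain $(\ti{f}_i b)^\ast = \ti{f}_i u_\infty \otimes t_{-\lambda + \alpha_i - \wt(b_2)} \otimes b_2^\ast$. Since $\ti{f}_i b \in \CB(\lambda)$, the element $(\ti{f}_i b)^\ast$ is extremal of weight $-\lambda$ (Remark~\ref{rem:wt}). The leftmost factor $\ti{f}_i u_\infty$ satisfies $\ve_i(\ti{f}_i u_\infty) = 1 \neq 0$, so the tensor product rule forces $\ti{e}_i (\ti{f}_i b)^\ast \neq \bzero$. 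On the other hand, extremality together with $\pair{-\lambda}{\alpha_2^{\vee}} = 1 \geq 0$ gives $\ti{e}_2 (\ti{f}_i b)^\ast = \bzero$; hence $i = 1$.

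Next I would pin down $b_2^\ast$. Extremality of $(\ti{f}_1 b)^\ast$ at weight $-\lambda$ yields $\ti{f}_1 (\ti{f}_1 b)^\ast = \bzero$ and $\ti{f}_2 S_2 (\ti{f}_1 b)^\ast = \ti{f}_2^2 (\ti{f}_1 b)^\ast = \bzero$, so that $\vp_1((\ti{f}_1 b)^\ast) = 0$ and $\vp_2((\ti{f}_1 b)^\ast) = 1$; reading these off the rightmost factor via the tensor product rule gives $\vp_1(b_2^\ast) = 0$ and $\vp_2(b_2^\ast) \leq 1$. Since $b_2 \neq u_{-\infty}$ forces $b_2^\ast \neq u_{-\infty}$, and since $u_{-\infty}$ is the only element of $\CB(-\infty)$ annihilated by every $\ti{f}_j$, we conclude $\vp_2(b_2^\ast) = 1$, so $b_2^\ast = \ti{e}_2 b_2'$ with $\ti{f}_2 b_2' = \bzero$. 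Then $S_2(\ti{f}_1 b)^\ast = \ti{f}_2 (\ti{f}_1 b)^\ast = \ti{f}_1 u_\infty \otimes t_{\nu'} \otimes b_2'$ for a suitable weight $\nu'$, and the leftmost factor $\ti{f}_1 u_\infty$ again forces $\ti{e}_1 S_2 (\ti{f}_1 b)^\ast \neq \bzero$. But $S_2(\ti{f}_1 b)^\ast$ is extremal of weight $-r_2\lambda$ with $\pair{-r_2\lambda}{\alpha_1^{\vee}} = a_1 - 1 \geq 0$, so $\ti{e}_1$ must annihilate it --- a contradiction, which completes the proof.

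I expect the main obstacle to be bookkeeping rather than any new idea: one must track the tensor product rule carefully at each step to be sure the Kashiwara operators act on the factor claimed, and must verify that the inequalities on $\pair{-\lambda}{\alpha_j^{\vee}}$ and on $\pair{-r_2\lambda}{\alpha_1^{\vee}}$ come out with the correct signs. The decisive place where the hypothesis $a_1,\,a_2 \geq 2$ enters is precisely the final inequality $\pair{-r_2\lambda}{\alpha_1^{\vee}} = a_1 - 1 \geq 0$, which is exactly what makes extremality incompatible with $\ti{e}_1$ acting nontrivially.
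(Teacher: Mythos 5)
Your proposal is correct and is exactly the argument the paper intends: the paper omits the proof of Lemma~\ref{n5} with the remark that it is ``similar to the proof of Lemma~\ref{n4}'', and your write-up is the correct dualization (swapping $\ti{e}_i \leftrightarrow \ti{f}_i$, the first and third tensor factors, and the indices $1 \leftrightarrow 2$), with all sign checks $\pair{-\lambda}{\alpha_2^{\vee}} = 1$, $\pair{-r_2\lambda}{\alpha_2^{\vee}} = -1$, and $\pair{-r_2\lambda}{\alpha_1^{\vee}} = a_1 - 1 \geq 0$ coming out right. The only cosmetic slip is asserting $\vp_2((\ti{f}_1 b)^{\ast}) = 1$ where at that point only $\leq 1$ is justified, but you immediately use the correct inequality $\vp_2(b_2^{\ast}) \leq 1$, so nothing is affected.
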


\begin{prop}\label{n6}
It holds that $\CB(\lambda) \subset 
(\CB(\infty) \otimes t_\lambda \otimes u_{-\infty}) \cup 
(u_\infty \otimes t_\lambda \otimes \CB(-\infty))$.
\end{prop}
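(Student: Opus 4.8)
The plan is to show that any element $b \in \CB(\lambda)$ must have either its first tensor factor equal to $u_\infty$ or its last tensor factor equal to $u_{-\infty}$. Write $b = b_1 \otimes t_\lambda \otimes b_2$ with $b_1 \in \CB(\infty)$ and $b_2 \in \CB(-\infty)$. By Theorem~\ref{n7} (applied to $b$), there is a sequence of Kashiwara operators, all raising or all lowering, connecting $u_\lambda = u_\infty \otimes t_\lambda \otimes u_{-\infty}$ to $b$. First I would treat the two cases separately: the case $b = \ti{f}_{i_k} \cdots \ti{f}_{i_1} u_\lambda$ and the case $b = \ti{e}_{i_k} \cdots \ti{e}_{i_1} u_\lambda$.

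Consider the first case, $b = \ti{f}_{i_k} \cdots \ti{f}_{i_1} u_\lambda$. I claim that every element on this path keeps its first tensor factor equal to $u_\infty$, so that $b \in u_\infty \otimes t_\lambda \otimes \CB(-\infty)$. The key is an inductive argument using Lemma~\ref{n5}: if $b' = u_\infty \otimes t_\lambda \otimes b_2'$ lies on the path and $\ti{f}_i b' \neq \bzero$, then either $b_2' = u_{-\infty}$ or $b_2' \neq u_{-\infty}$. In the latter case Lemma~\ref{n5} gives directly $\ti{f}_i b' = u_\infty \otimes t_\lambda \otimes \ti{f}_i b_2'$, preserving the first factor. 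In the former case $b' = u_\lambda$, and one checks by the tensor product rule (using $\ve_i(u_\infty) = 0$ and the weight $\pair{\lambda}{\alpha_i^\vee}$) that $\ti{f}_i u_\lambda$ again has first factor $u_\infty$. Thus by induction the first factor stays $u_\infty$ along the entire lowering path, so $b$ lies in the second set of the asserted union. Symmetrically, in the case $b = \ti{e}_{i_k}\cdots\ti{e}_{i_1} u_\lambda$, I would use Lemma~\ref{n4} to show the last factor stays $u_{-\infty}$ throughout the raising path, placing $b$ in the first set $\CB(\infty) \otimes t_\lambda \otimes u_{-\infty}$.

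The main obstacle is making the induction genuinely airtight, because Lemma~\ref{n5} only applies directly when $b_2' \neq u_{-\infty}$; at the starting point $b' = u_\lambda$ the lemma is not applicable and one must verify by hand that applying $\ti{f}_i$ does not move the action onto the first tensor factor. Concretely, I would check that $\ti{f}_i u_\lambda = u_\infty \otimes t_\lambda \otimes \ti{f}_i u_{-\infty}$ whenever this is nonzero, using $\vp_i(u_\infty) = \pair{\lambda}{\alpha_i^\vee}$ together with $\ve_i(u_{-\infty}) = 0$ and the tensor product rule for $\ti{f}_i$. After the first step, the third factor becomes $\ti{f}_i u_{-\infty} \neq u_{-\infty}$, and all subsequent applications of $\ti{f}_j$ are covered by Lemma~\ref{n5} as long as the first factor remains $u_\infty$, which is exactly what the induction propagates.

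Once both cases are handled, the proposition follows immediately:
\begin{equation*}
\CB(\lambda) \subset
(\CB(\infty) \otimes t_\lambda \otimes u_{-\infty}) \cup
(u_\infty \otimes t_\lambda \otimes \CB(-\infty)),
\end{equation*}
since each $b \in \CB(\lambda)$ is reached from $u_\lambda$ by one of the two kinds of paths provided by Theorem~\ref{n7}, and we have shown that a lowering path keeps the first factor equal to $u_\infty$ while a raising path keeps the last factor equal to $u_{-\infty}$. I expect the write-up to be short, with the only delicate point being the careful base-case verification described above.
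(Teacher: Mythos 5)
There are two genuine problems with your argument. The first is circularity: you invoke Theorem~\ref{n7} to produce the raising/lowering paths from $u_\lambda$ to $b$, but in the paper's logical order Proposition~\ref{n6} is precisely the ingredient used to \emph{prove} Theorem~\ref{n7} (its proof begins ``By Proposition~\ref{n6}, $b$ is either of the following forms\dots''). Unless you supply an independent proof of connectedness, you cannot use it here. The second problem is that the substance of your induction is backwards, and its base case is false. Since $u_{-\infty}$ is the element of $\CB(-\infty)$ of minimal weight, $\ti{f}_i u_{-\infty} = \bzero$ for every $i$, so the identity $\ti{f}_i u_\lambda = u_\infty \otimes t_\lambda \otimes \ti{f}_i u_{-\infty}$ you propose to verify can never hold nontrivially. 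A direct computation with the tensor product rule (note $\vp_i(u_\infty)=0$, not $\pair{\lambda}{\alpha_i^{\vee}}$; the quantity $\pair{\lambda}{\alpha_i^{\vee}}$ is $\vp_i(u_\infty \otimes t_\lambda)$) gives $\ti{f}_1 u_\lambda = \ti{f}_1 u_\infty \otimes t_\lambda \otimes u_{-\infty}$ and $\ti{f}_2 u_\lambda = \bzero$: the lowering operator acts on the \emph{first} factor, so your claim that a lowering path keeps the first factor equal to $u_\infty$ fails at the very first step. The true picture is the mirror image: lowering paths stay inside $\CB(\infty) \otimes t_\lambda \otimes u_{-\infty}$ (trivially, because $\ti{f}_i u_{-\infty}=\bzero$ forces $\ti{f}_i$ onto the left factor whenever the result is nonzero), and raising paths stay inside $u_\infty \otimes t_\lambda \otimes \CB(-\infty)$. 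Your pairing of Lemma~\ref{n5} with the lowering case and Lemma~\ref{n4} with the raising case is likewise reversed relative to what those lemmas actually control ($\ti{f}_i$ on $u_\infty \otimes t_\lambda \otimes b_2$ and $\ti{e}_i$ on $b_1 \otimes t_\lambda \otimes u_{-\infty}$, respectively).

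The paper avoids both issues by arguing in the opposite direction. It sets $\CC := \CB(\lambda) \cap \bigl((\CB(\infty) \otimes t_\lambda \otimes u_{-\infty}) \cup (u_\infty \otimes t_\lambda \otimes \CB(-\infty))\bigr)$ and uses Lemmas~\ref{n4} and~\ref{n5}, together with the trivial facts $\ti{f}_i u_{-\infty} = \bzero$ and $\ti{e}_i u_\infty = \bzero$, to show that $\CC$ is closed under all Kashiwara operators, i.e.\ is a subcrystal of $\CB(\lambda)$. It then shows that an arbitrary $b = b_1 \otimes t_\lambda \otimes b_2 \in \CB(\lambda)$ is connected to $\CC$ by applying operators $\ti{f}_{i_k}^{\max} \cdots \ti{f}_{i_1}^{\max}$ chosen to bring $b_2$ down to $u_{-\infty}$; closedness of $\CC$ then forces $b \in \CC$. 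No appeal to Theorem~\ref{n7} is needed. If you want to salvage your approach, you would have to first prove connectedness without Proposition~\ref{n6} and also correct the direction of your factor-tracking claims; the paper's route is the more economical one.
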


\begin{proof}
By Lemmas \ref{n4} and \ref{n5}, the subset
\begin{equation*}
\CB(\lambda) \cap \bigl((\CB(\lambda) \otimes t_\lambda \otimes u_{-\infty}) \cup 
(u_\infty \otimes t_\lambda \otimes \CB(\lambda))\bigr) =: \CC
\end{equation*}
is a subcrystal of $\CB(\lambda)$. Therefore it suffices to show that 
every element $b \in \CB(\lambda)$ is connected to an element of $\CC$.
Write $b$ as $b = b_1 \otimes t_\lambda \otimes b_2$ 
with $b_1 \in \CB(\infty)$ and $b_2 \in \CB(-\infty)$. 
It is known that there exist $i_1,\,\dots,\,i_k \in I$ 
such that $\ti{f}_{i_k}^{\max} \cdots \ti{f}_{i_1}^{\max}b_2 = u_{-\infty}$. 
Then we see by the tensor product rule of crystals that 
$\ti{f}_{i_k}^{\max} \cdots \ti{f}_{i_1}^{\max}b = 
b_1' \otimes t_\lambda \otimes u_{-\infty}$ for some $b_1' \in \CB(\infty)$, 
which implies that $\ti{f}_{i_k}^{\max} \cdots \ti{f}_{i_1}^{\max} b \in \CC$. 
Thus we have proved the proposition.
\end{proof}

Here, for $\alpha = \sum_{i \in I} c_i\alpha_i \in Q:=\bigoplus_{i \in I}\BZ \alpha_{i}$, 
we set $|\alpha| := \sum_{i \in I} |c_i| \in \BZ_{\geq 0}$. 
Also, we recall that $-\wt(b_{1}) \in Q_{+}$ for all $b_{1} \in \CB(\infty)$ and 
$\wt(b_{2}) \in Q_{+}$ for all $b_{2} \in \CB(-\infty)$. 

\begin{proof}[Proof of Theorem~\ref{n7}]
By Proposition \ref{n6}, $b \in \CB(\lambda)$ is either of 
the following forms: $b = b_1 \otimes t_\lambda \otimes u_{-\infty}$ 
for some $b_1 \in \CB(\infty)$, or 
$b = u_\infty \otimes t_\lambda \otimes b_2$ 
for some $b_2 \in \CB(-\infty)$. 
We show by induction on $|\wt(b_1)|$ that 
if $b \in \CB(\lambda)$ is the form $b = b_1 \otimes t_\lambda \otimes u_{-\infty}$ 
for some $b_1 \in \CB(\infty)$, then $b = \ti{f}_{i_k} \cdots \ti{f}_{i_1} u_{\lambda}$ 
for some $i_1,\,\dots,\,i_k \in I$. 
If $|\wt(b_1)| = 0$, then the assertion is obvious 
since $b_1 = u_\infty$, and hence $b = u_\lambda$.
Assume that $|\wt(b_1)| \geq 1$. Since $b_1 \neq u_\infty$, 
there exists $i \in I$ such that $\ti{e}_i b_1 \neq \bzero$; 
we see by the tensor product rule of crystals that 
$\ti{e}_i b \neq \bzero$. Moreover, we deduce by Lemma \ref{n4} that
\begin{equation*}
\ti{e}_i b = \ti{e}_i (b_1 \otimes t_\lambda \otimes u_{-\lambda}) = 
\ti{e}_i b_1 \otimes t_\lambda \otimes u_{-\infty}.
\end{equation*}
Since $|\wt(\ti{e}_{i}b_1)| = k - 1$, it follows 
by the induction hypothesis that 
there exist $i_1,\,\dots,\,i_k \in I$ such that 
$\ti{e}_i b = \ti{f}_{i_k} \cdots \ti{f}_{i_1} u_\lambda$. 
Then we obtain $b = \ti{f}_{i}\ti{f}_{i_k} \cdots \ti{f}_{i_1} u_{\lambda}$.

Similarly, we show by induction on $|\wt(b_2)|$ that 
if $b \in \CB(\lambda)$ is the form $b = u_\infty \otimes t_\lambda \otimes b_2$ 
for some $b_2 \in \CB(-\infty)$, then 
$b = \ti{e}_{i_k} \cdots \ti{e}_{i_1} u_{\lambda}$ 
for some $i_1,\,\dots,\,i_k \in I$. Thus we have proved Theorem~\ref{n7}.
\end{proof}
%
%
\subsection{Proof of Theorem~\ref{n10}.}
\label{subsec:prf2}

First, we show part (1) of Theorem~\ref{n10}. 
The ``only if'' part is obvious. To show the ``if'' part, 
assume that $x \lambda = y \lambda$ for $x, y \in W$. 
Then the weight of the element $S_{x ^{-1}} S_y u_\lambda$ 
is equal to $x^{-1}y\lambda = \lambda$. Therefore, by Corollary~\ref{n14}, 
we obtain $S_{x ^{-1}} S_y u_\lambda = u_\lambda$, 
and hence $S_x u_\lambda = S_y u_\lambda$, as desired.

Next, we show part (2) of Theorem~\ref{n10}. 
Let $b \in \CB(\lambda)$ be an extremal element. 
By Proposition~\ref{n6}, $b$ is either of the following forms:
$b = b_1 \otimes t_\lambda \otimes u_{-\infty}$ 
for some $b_1 \in \CB(\infty)$ or  $b = u_\infty \otimes t_\lambda \otimes b_2$ 
for some $b_2 \in \CB(-\infty)$.
We show by induction on $|\wt(b_1)|$ that 
if an extremal element $b \in \CB(\lambda)$ is of the form: 
$b = b_1 \otimes t_\lambda \otimes u_{-\infty}$ for some $b_1 \in \CB(\infty)$, 
then there exists $w \in W$ such that $b = S_w u_\lambda$. 
If $|\wt(b_1)| = 0$, then the assertion is obvious 
since $b_1 = u_\infty$,  and hence $b = u_\lambda$. 
Assume that $|\wt(b_1)| > 0$. There exists $i \in I$ such that $\ti{e}_i b_1 \neq \bzero$;
notice that $\ti{e}_i b \neq \bzero$ by the tensor product rule of crystals.
Since $b$ is an extremal element by assumption, 
we see that $\vp_i(b) = 0$ and $\pair{\wt(b)}{\alpha_i^{\vee}} = -\ve_i(b) \leq -1$; 
we set $n := - \pair{\wt(b)}{\alpha_i^{\vee}} \geq 1$. We deduce that
\begin{align*}
S_i b 
& = \ti{e}_i^n (b_1 \otimes t_\lambda \otimes u_{-\infty}) 
  = \ti{e}_i^{n-1} (\ti{e}_i b_1 \otimes t_\lambda \otimes u_{-\infty}) \quad 
    \text{by Lemma~\ref{n4}} \\
& = (\ti{e}_i^{n} b_1) \otimes t_\lambda \otimes u_{-\infty} \quad 
    \text{by the tensor product rule of crystals}.
\end{align*}
Because $S_i b$ is also an extremal element in $\CB(\lambda)$, and 
$|\wt(\ti{e}_i^{n} b_1)| < |\wt(b_1)|$, 
it follows from the induction hypothesis that 
$S_i b = S_w u_\lambda$ for some $w \in W$. 
Thus we get $b = S_i S_w u_\infty = S_{r_i w}u_\lambda$. 

Similarly, we can show by induction on $|\wt(b_2)|$ that 
if an extremal element $b \in \CB(\lambda)$ is of the form: 
$b = u_\infty \otimes t_\lambda \otimes b_2$ for some $b_2 \in \CB(-\infty)$, 
then there exists $w \in W$ such that $b = S_w u_\lambda$. 
Thus we have proved Theorem~\ref{n10}.
%
%
\subsection{Proof of Theorem \ref{n1124(1)}.}
\label{subsec:prf3}

Let $U \subset V(\lambda)$ be a $U_{\q}(\Fg)$-submodule of $V(\lambda)$ not identical to $\{0\}$, 
and let $U=\bigoplus_{\mu \in P} U_{\mu}$ be the weight space decomposition of $U$. 
We see from Theorem~\ref{n7} that if $U_{\mu} \ne \{0\}$ for $\mu \in P$, 
then $\mu \in \lambda-Q_{+}$ or $\mu \in \lambda+Q_{+}$. Here we show that 
if $U_{\mu} \ne \{0\}$ for some $\mu \in \lambda-Q_{+}$, then $U=V(\lambda)$. 
Take $\mu \in \lambda-Q_{+}$ in such a way that 
$U_{\mu} \ne \{0\}$, and $|\lambda-\nu| \ge |\lambda-\mu|$ 
for all $\nu \in \lambda-Q_{+}$ such that $U_{\nu} \ne \{0\}$.
If $\mu = \lambda$, then it follows from Corollary~\ref{n14} that $U_{\mu}=U_{\lambda}$ 
contains the extremal weight vector $v_{\lambda} \in V(\lambda)$. 
Since $V(\lambda)=U_{\q}(\Fg)v_{\lambda}$, we obtain $U=V(\lambda)$, as desired. 
Suppose, for a contradiction, that $\mu \ne \lambda$. 
Let $u \in U_{\mu}$, $u \ne 0$. 
Then, by the minimality of $\mu$, we see that 
$E_{i}u = 0$ for all $i \in I$, where $E_{i}$, $i \in I$, is 
the Chevalley generator of $U_{\q}(\Fg)$ corresponding to $\alpha_{i}$. 
Recall from \S\ref{subsec:extmod} that $V(\lambda)$ 
has a global basis $\bigl\{ G(b) \mid b \in \CB(\lambda) \bigr\}$.  
Write the $u$ above as: 
\begin{equation*}
u = f_{1}(\q)G(b_{1}) + f_{2}(\q)G(b_{2}) + \cdots + f_{m}(\q)G(b_{m}), 
\end{equation*}
where $b_{1},\,\dots,\,b_{m} \in \CB(\lambda)_{\mu}$ and 
$f_{1}(\q),\,\dots,\,f_{m}(\q) \in \BC(\q) \setminus \{0\}$. 
We see that there exists $g(\q) \in \BC(\q)$ such that 
$g_{k}(\q):= f_{k}(\q)g(\q) \in \BA_0$ for all $1 \le k \le m$, and 
there exists $1 \le k \le m$ such that $c_{k} := g_{k}(0) \neq 0$;
we may assume that $c_{1} \ne 0$. Then, $u':=g(\q)u$ is an element of 
the crystal lattice $\CL(\lambda)$ of $V(\lambda)$, and 
\begin{equation*}
u' + \q\CL(\lambda) = 
c_{1}b_{1} + c_{2}b_{2} + \cdots + c_{m}b_{m} 
\quad \text{in $\CL(\lambda)/\q\CL(\lambda)$}.
\end{equation*}
Since $\wt(b_{1}) = \mu \in \lambda - Q_{+}$, 
we deduce from Theorem~\ref{n7} that 
there exists $i \in I$ such that $\ti{e}_{i}b_{1} \ne 0$; 
we may assume without loss of generality 
that $\ti{e}_{i}b_{k} \ne 0$ for all $1 \le k \le n$, and 
$\ti{e}_{i}b_{k} = 0$ for all $n+1 \le k \le m$. 
Since $E_{i}u'=0$ in $V(\lambda)$, it follows from the definition of 
the raising Kashiwara operator $\ti{e}_{i}$ on $V(\lambda)$ that 
$\ti{e}_{i}u' = 0$ in $V(\lambda)$. 
Therefore, we have $\ti{e}_{i}(u'+\q\CL(\lambda)) = 0$ in 
$\CL(\lambda)/\q\CL(\lambda)$, and hence
\begin{equation*}
c_{1}\ti{e}_{i}b_{1} + c_{2}\ti{e}_{i}b_{2} + \cdots + c_{n}\ti{e}_{i}b_{n} = 0 \quad 
\text{in $\CL(\lambda)/\q\CL(\lambda)$}.
\end{equation*}
However, because $\ti{e}_{i}b_{k}$, $1 \le k \le n$, 
are linearly independent, this equality contradicts $c_{1} \ne 0$. 

Similarly, we can show that 
if $U_{\mu} \ne \{0\}$ for some $\mu \in \lambda+Q_{+}$, 
then $U=V(\lambda)$. Thus we have proved Theorem~\ref{n1124(1)}.
%
%
\section{Proof of the isomorphism theorem.}
\label{sec:prf4}

In this section (except for the proof of Theorem~\ref{20171126(4)}), 
we employ the notation and setting in \S\ref{subsec:km}. 
Namely, we assume that $\Fg$ is an arbitrary symmetrizable Kac-Moody algebra. 
For $\mu \in P$, we denote by $\CB_{0}(\mu)$ 
(resp., $\BB_{0}(\mu)$) the connected component of 
$\CB(\mu)$ (resp., $\BB(\mu)$) containing $u_{\mu}$ 
(resp., $\pi_{\mu}$). 

\begin{thm} \label{thm:isom2}
Let $\mu \in P$ be an integral weight. 
If $\CB_{0}(\mu)_{\mu} = \bigl\{ u_{\mu} \bigr\}$ and 
$\BB_{0}(\mu)_{\mu} = \bigl\{ \pi_{\mu} \bigr\}$, then 
there exists an isomorphism $\CB_{0}(\mu) \stackrel{\sim}{\rightarrow} 
\BB_{0}(\mu)$ of crystals that sends $u_{\mu}$ to $\pi_{\mu}$. 
\end{thm}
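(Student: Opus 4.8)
The plan is to construct a crystal isomorphism by exploiting the extremal structure shared by both crystals and the hypotheses that the weight-$\mu$ spaces of the relevant connected components are singletons.

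First I would establish a correspondence on the Weyl group orbits. Both $u_\mu \in \CB_0(\mu)$ and $\pi_\mu \in \BB_0(\mu)$ are extremal elements of weight $\mu$, so the Weyl group actions $S_w$ are defined on each. I would define a candidate map by sending $S_w u_\mu \mapsto S_w \pi_\mu$ on the $W$-orbit of $u_\mu$. For this to be well-defined, I need to know that $S_x u_\mu = S_y u_\mu$ forces $S_x \pi_\mu = S_y \pi_\mu$. The hypothesis $\CB_0(\mu)_\mu = \{u_\mu\}$ together with the argument used for Theorem~\ref{n10}(1) should give $S_x u_\mu = S_y u_\mu \iff x\mu = y\mu$, and by Lemma~\ref{20141124(5)} we have $S_w \pi_\mu = \pi_{w\mu}$, so $S_x \pi_\mu = S_y \pi_\mu \iff x\mu = y\mu$ as well; hence the two equivalence relations coincide and the orbit map is a well-defined bijection compatible with the $S_w$-actions.

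Next I would propagate this correspondence through the Kashiwara operators to define an isomorphism on the whole connected components. The key general principle is that for a normal crystal generated (in the connected sense) from a single extremal element, the crystal is determined up to isomorphism by the $W$-orbit of that element; concretely, any element of $\CB_0(\mu)$ is obtained from some $S_w u_\mu$ by applying a sequence of operators $\ti{e}_i, \ti{f}_i$, and similarly for $\BB_0(\mu)$. I would build the map $\Psi : \CB_0(\mu) \to \BB_0(\mu)$ by induction along such paths, setting $\Psi(\ti{f}_i b) = \ti{f}_i \Psi(b)$ and $\Psi(\ti{e}_i b) = \ti{e}_i \Psi(b)$, and checking that $\Psi$ preserves $\wt$, $\ve_i$, $\vp_i$. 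The weight preservation on the orbit is immediate since $\wt(S_w u_\mu) = w\mu = \wt(S_w \pi_\mu)$; the compatibility with $\ve_i, \vp_i$ for normal crystals follows from $\ve_i(b) + \vp_i(b) = $ length of the $i$-string and $\vp_i(b) - \ve_i(b) = \pair{\wt(b)}{\alpha_i^\vee}$, both of which are weight-determined.

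The main obstacle — and the crux of the proof — will be showing that $\Psi$ is well-defined and injective, i.e., that the recursive definition does not depend on the chosen path of operators and that distinct elements are not identified. This is where the singleton hypotheses $\CB_0(\mu)_\mu = \{u_\mu\}$ and $\BB_0(\mu)_\mu = \{\pi_\mu\}$ do the essential work: they guarantee that the only ambiguity in expressing an element via operators from extremal vertices is exactly the ambiguity already resolved at the level of the orbit. I expect the cleanest formulation is to invoke a uniqueness result for crystal morphisms generated by extremal elements — the map $\Psi$ should be realized as the unique crystal morphism determined by $u_\mu \mapsto \pi_\mu$, with the singleton conditions ensuring it restricts to a bijection on each weight space and hence is an isomorphism of crystals onto $\BB_0(\mu)$.
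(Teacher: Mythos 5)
You have correctly identified the skeleton of the argument (define the map by $Xu_{\mu} \mapsto X\pi_{\mu}$ for monomials $X$ in Kashiwara operators, and use the singleton hypotheses to control well-definedness), and you have correctly located where the hypotheses $\CB_{0}(\mu)_{\mu} = \{u_{\mu}\}$ and $\BB_{0}(\mu)_{\mu} = \{\pi_{\mu}\}$ enter: they are used exactly as in the paper's Lemma~\ref{lem:equal} to show $X_{1}\pi_{\mu}=X_{2}\pi_{\mu} \iff X_{1}u_{\mu}=X_{2}u_{\mu}$. But there is a genuine gap at the step you describe as ``propagating the correspondence through the Kashiwara operators.'' Before any question of well-definedness arises, you must show that for a monomial $X$ in Kashiwara operators, $Xu_{\mu} \ne \bzero$ if and only if $X\pi_{\mu} \ne \bzero$ (the paper's \eqref{eq:nonzero}). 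Your proposed justification --- that $\ve_{i}$ and $\vp_{i}$ are ``weight-determined'' because $\vp_{i}(b)-\ve_{i}(b)=\pair{\wt(b)}{\alpha_{i}^{\vee}}$ and $\ve_{i}(b)+\vp_{i}(b)$ is the $i$-string length --- is circular: only the difference is determined by the weight, and the string length through a non-extremal element is precisely the quantity you have no a priori control over. The identity $\ve_{i}(b)=\max(0,-\pair{\wt(b)}{\alpha_{i}^{\vee}})$ holds for \emph{extremal} $b$, so your argument works on the $W$-orbit of $u_{\mu}$, but it gives nothing for the intermediate elements $\ti{x}_{i_{q}}\cdots\ti{x}_{i_{1}}u_{\mu}$ that are not extremal. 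Likewise, the ``uniqueness result for crystal morphisms generated by extremal elements'' you invoke at the end is not an available general fact; it is essentially the content of the theorem being proved.

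The paper supplies the missing mechanism with the similarity maps $\Sigma_{m}$ of Propositions~\ref{prop:s1} and~\ref{prop:s2}: for any fixed chain of Kashiwara operators one can choose $m$ (Lemmas~\ref{lem:ext1} and~\ref{lem:ext2}) so that every intermediate element, after applying $\Sigma_{m}$, becomes a tensor product of \emph{extremal} elements $S_{w}u_{\mu}$ (resp.\ $S_{w}\pi_{\mu}$). On such tensor products the action of $\ti{e}_{i},\ti{f}_{i}$ is governed entirely by the quantities in \eqref{eq:ten}, which agree between the two crystals because they are determined by the weights of extremal elements. This is what yields \eqref{eq:nonzero} (Lemma~\ref{lem:ext3}), and only then do the singleton hypotheses finish the job via Lemma~\ref{lem:equal}. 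Without $\Sigma_{m}$ or some substitute for it, your induction along operator paths has no way to transfer the non-vanishing of $\ti{f}_{i}b$ in $\CB_{0}(\mu)$ to the non-vanishing of $\ti{f}_{i}\Psi(b)$ in $\BB_{0}(\mu)$, so the map is not even defined.
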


Here, let us prove Theorem~\ref{20171126(4)}, 
assuming that Theorem~\ref{thm:isom2} is true. 

\begin{proof}[Proof of Theorem~\ref{20171126(4)}]
Recall that the crystals $\CB(\lambda)$ and $\BB(\lambda)$ are 
connected by Theorem~\ref{n7} and \cite[Theorem~3.2.1]{o1}, respectively.  
Also, recall from Corollary \ref{n14} that 
$\CB(\lambda)_{\lambda} = \bigl\{ u_{\lambda} \bigr\}$. 
Further, we see from \cite[Theorem 4.2.1]{o1} that 
$\BB(\lambda)_{\lambda} = \bigl\{ \pi_{\lambda} \bigr\}$. 
Therefore, Theorem~\ref{20171126(4)} follows immediately 
from Theorem~\ref{thm:isom2}. 
\end{proof}

Thus it remains to prove Theorem~\ref{thm:isom2}.
By using ``\emph{similarities}'' of 
the crystals $\BB(\mu)$ and $\CB_{0}(\mu)$ 
(see Propositions \ref{prop:s1} and \ref{prop:s2} below, respectively), 
we can prove this theorem in exactly the same way as 
\cite[Theorem~4.1]{o13} and \cite[Theorem~5.1]{o7} (see \S\ref{subsec:prf4} below). 
We remark that the assumptions $\CB_{0}(\mu)_{\mu} = \bigl\{ u_{\mu} \bigr\}$ and 
$\BB_{0}(\mu)_{\mu} = \bigl\{ \pi_{\mu} \bigr\}$ are used only in Lemma~\ref{lem:equal}. 
%
%
\subsection{Similarity of the crystal $\CB(\mu)$.}
\label{subsec:sim1}

\begin{prop} \label{prop:s1}
Let $\mu \in P$ be an arbitrary integral weight, 
and $m \in \BZ_{\ge 1}$. There exists an injective map 
$\Sigma_{m}:\BB(\mu) \hookrightarrow \BB(\mu)^{\otimes m}$ 
$= \BB(\mu) \otimes \cdots \otimes \BB(\mu)$ ($m$ times) 
satisfying the following conditions:
\begin{equation} \label{eq:n1}
\begin{split}
& \Sigma_{m}(\pi_{\mu}) = \pi_{\mu}^{\otimes m},  \\
& \wt (\Sigma_{m}(\pi)) = m \wt (\pi) 
  \quad \text{\rm for $\pi \in \BB(\mu)$}, \\
& \Sigma_{m}(\ti{e}_{i}\pi) = \ti{e}_{i}^{m}\Sigma_{m}(\pi), \ 
  \Sigma_{m}(\ti{f}_{i}\pi) = \ti{f}_{i}^{m}\Sigma_{m}(\pi)
  \quad \text{\rm for $\pi \in \BB(\mu)$ and $i \in I$}, \\
& \ve_{i} (\Sigma_{m}(\pi)) = m \ve_{i} (\pi), \ 
  \vp_{i} (\Sigma_{m}(\pi)) = m \vp_{i} (\pi)
  \quad \text{\rm for $\pi \in \BB(\mu)$ and $i \in I$},
\end{split}
\end{equation}
where we understand $\Sigma_{m}(\bzero)=\bzero$. 
Moreover, for every $m,n \in \BZ_{\ge 1}$, 
the following diagram commutes:
\begin{equation} \label{eq:mn1}
\begin{split}
\xymatrix{
\BB(\mu) 
\ar[r]^-{\Sigma_{m}}
\ar[d]_{\Sigma_{mn}}
& \BB(\mu)^{\otimes m} 
\ar[ld]^{\Sigma_{n}^{\otimes m}} \\
\BB(\mu)^{\otimes mn}
}
\end{split}
\end{equation}
\end{prop}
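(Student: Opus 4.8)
The goal is to construct, for each $m \in \BZ_{\ge 1}$, an injective "similarity" map $\Sigma_m : \BB(\mu) \hookrightarrow \BB(\mu)^{\otimes m}$ satisfying the listed compatibilities with weights and Kashiwara operators, together with the coherence condition \eqref{eq:mn1}. The plan is to define $\Sigma_m$ geometrically, exploiting the fact that LS paths are piecewise-linear maps $[0,1] \to \BR \otimes_\BZ P$. For an LS path $\pi \in \BB(\mu)$, written as the map $\pi(t)$, I would consider the natural "stretching/concatenation" construction: decompose $[0,1]$ into $m$ subintervals $[\tfrac{j-1}{m}, \tfrac{j}{m}]$ and let the $j$-th tensor factor of $\Sigma_m(\pi)$ be the LS path $\pi_j$ defined by $\pi_j(t) := m\bigl(\pi(\tfrac{j-1+t}{m}) - \pi(\tfrac{j-1}{m})\bigr)$ for $t \in [0,1]$. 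Geometrically, $\Sigma_m(\pi)$ cuts $\pi$ into $m$ equal-time pieces, rescales each piece back to domain $[0,1]$ and to the full scale of $\mu$, and tensors them. Each $\pi_j$ is again an LS path of shape $\mu$ because cutting an LS path and rescaling preserves the defining rational-number and $\sigma$-chain conditions (the local direction vectors lie in $W\mu$ and the break-point rationals only get rescaled by $m$).

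\textbf{Verifying the algebraic properties.} Once $\Sigma_m$ is defined, the first three conditions in \eqref{eq:n1} are essentially bookkeeping. The normalization $\Sigma_m(\pi_\mu) = \pi_\mu^{\otimes m}$ is immediate since the straight-line path $\pi_\mu(t) = t\mu$ cuts into $m$ copies of itself. The weight identity $\wt(\Sigma_m(\pi)) = m\,\wt(\pi)$ follows because $\wt(\pi_j) = \pi_j(1) = m(\pi(\tfrac{j}{m}) - \pi(\tfrac{j-1}{m}))$ telescopes over the tensor product to $m\,\pi(1)$. The key point is the intertwining of the root operators, $\Sigma_m(\ti{e}_i\pi) = \ti{e}_i^m\Sigma_m(\pi)$ and the analogue for $\ti{f}_i$. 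The plan is to read this off from the graph function $H_i^\pi(t) = \pair{\pi(t)}{\alpha_i^\vee}$: under the cut-and-rescale operation, the concatenated graph of $\Sigma_m(\pi)$ across the $m$ tensor factors, computed via the tensor product rule \eqref{eq:eq105}, reproduces $m$ copies of the rescaled profile of $H_i^\pi$. Because the action of $\ti{e}_i$ on a single LS path is governed entirely by the positions $t_0, t_1$ where $H_i^\pi$ attains $m_i^\pi$ and $m_i^\pi + 1$ (see \eqref{eq:eq107}), and since rescaling multiplies all height values by $m$, applying $\ti{e}_i$ once to the $m$-scaled path corresponds exactly to applying $\ti{e}_i^m$ to $\Sigma_m(\pi)$ factor-by-factor in the tensor product. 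This simultaneously yields the $\ve_i, \vp_i$ scaling since $\ve_i(\pi) = -m_i^\pi$ and $\vp_i(\pi) = H_i^\pi(1) - m_i^\pi$ both scale by $m$.

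\textbf{Injectivity and coherence.} Injectivity of $\Sigma_m$ follows because $\pi$ can be reconstructed from $\Sigma_m(\pi)$: concatenating the $m$ rescaled factors (reversing the cut-and-scale) recovers $\pi$ uniquely. The coherence diagram \eqref{eq:mn1} is a direct computation: both $\Sigma_n^{\otimes m} \circ \Sigma_m$ and $\Sigma_{mn}$ cut $[0,1]$ into $mn$ equal pieces and rescale each, and one checks the two ways of iterating the subdivision (first into $m$ then each into $n$, versus directly into $mn$) produce identical tensor factors; this is an associativity statement for the subdivision of the unit interval.

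\textbf{The main obstacle.} I expect the genuinely delicate step to be the root-operator intertwiner $\Sigma_m(\ti{e}_i\pi) = \ti{e}_i^m\Sigma_m(\pi)$, and specifically the careful interplay between the global graph $H_i^\pi$ of $\pi$ and the \emph{tensor-product} computation of the root operators on $\Sigma_m(\pi)$. The subtlety is that $\ti{e}_i$ on a tensor product selects which factor to act on via the $\vp_i$-versus-$\ve_i$ comparison, so one must verify that the $m$ successive applications of $\ti{e}_i$ cascade through the tensor factors in a way that exactly mirrors the single geometric operation of $\ti{e}_i$ on the unscaled $\pi$ followed by rescaling. Handling the case where the minimum $m_i^\pi$ is attained across a factor boundary, and ensuring the break points introduced by the cut do not create spurious new local minima in $H_i^\pi$, will require the most care; this is where the integrality property \eqref{eq:eq106} of local minima of $H_i^\pi$ is essential.
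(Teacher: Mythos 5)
Your construction is exactly the map the paper uses: the paper defines $\Sigma_m$ as the composite $\BB(\mu)\xrightarrow{\Theta_m}\BB(m\mu)\hookrightarrow\BB(\mu)^{\ast m}\xrightarrow{\sim}\BB(\mu)^{\otimes m}$ (scale by $m$, cut into $m$ equal-time pieces, pass from concatenation to tensor product), and unwinding this composite gives precisely your formula $\pi_j(t)=m\bigl(\pi(\tfrac{j-1+t}{m})-\pi(\tfrac{j-1}{m})\bigr)$. The step you flag as the main obstacle --- that the tensor-product root operators on the $m$ factors reproduce the single geometric root operator on the rescaled path --- is exactly what the paper disposes of by citation rather than direct computation, namely Littelmann's similarity lemma \cite[Lemma~2.4]{o10} for $m(\ti{e}_{i}\pi)=\ti{e}_{i}^{m}(m\pi)$ together with his result \cite[\S2]{o10} that concatenation of paths realizes the tensor product of crystals, so your proposed direct verification via the graph functions $H_{i}^{\pi}$ would amount to reproving those two facts.
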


We can obtain the map 
$\Sigma_{m}:\BB(\mu) \hookrightarrow \BB(\mu)^{\otimes m}$ above 
in exactly the same way as the map $\sigma_{m,\mu}$ with the notation in \cite[\S4.1]{o7}. 
Let us explain the outline. 
First, for $\pi \in \BB(\mu)$, we define 
$m\pi : [0, 1] \rightarrow \BR \otimes_{\BZ} P$ by 
$(m\pi)(t):=m\pi(t)$ for $t \in [0,1]$.
We deduce from the definition of LS paths that 
$m\pi \in \BB(m\mu)$; indeed, we see that 
if $\pi=(\nu_{1},\dots,\nu_{s}\,;\,\sigma_{0},
\sigma_{1},\dots,\sigma_{s}) \in \BB(\mu)$, then 
$m\pi=(m\nu_{1},\dots,m\nu_{s}\,;\,\sigma_{0},
\sigma_{1},\dots,\sigma_{s}) \in \BB(m\mu)$. 
Furthermore, we know from \cite[Lemma~2.4]{o10} that 
the map $\Theta_{m}:\BB(\mu) \rightarrow \BB(m\mu)$, $\pi \mapsto m\pi$, 
satisfies
\begin{equation} \label{eq:n1a}
\begin{split}
& \Theta_{m}(\pi_{\mu}) = \pi_{m\mu}, \\
& \wt (\Theta_{m}(\pi)) = m \wt (\pi) 
  \quad \text{for $\pi \in \BB(\mu)$}, \\
& \Theta_{m}(\ti{e}_{i}\pi) = \ti{e}_{i}^{m}\Theta_{m}(\pi), \ 
  \Theta_{m}(\ti{f}_{i}\pi) = \ti{f}_{i}^{m}\Theta_{m}(\pi)
  \quad \text{for $\pi \in \BB(\mu)$ and $i \in I$}, \\
& \ve_{i} (\Theta_{m}(\pi)) = m \ve_{i} (\pi), \ 
  \vp_{i} (\Theta_{m}(\pi)) = m \vp_{i} (\pi)
  \quad \text{for $\pi \in \BB(\mu)$ and $i \in I$},
\end{split}
\end{equation}
where we understand $\Theta_{m}(\bzero)=\bzero$. 

Next, for $\pi_{1},\pi_{2},\dots,\pi_{m} \in \BB(\mu)$, 
we define a concatenation 
$\pi_{1} \ast \pi_{2} \ast \cdots \ast \pi_{m}$ of them by:
\begin{equation*}
\begin{split}
& (\pi_{1} \ast \pi_{2} \ast \cdots \ast \pi_{m})(t)=
  \sum_{l=1}^{k-1} 
  \pi_{l}(1)+ \pi_{k}(mt-k+1) \nonumber \\[-1.5mm]
& \hspace{60mm} \text{for \ } 
\frac{k-1}{m} \le t \le \frac{k}{m}, \ 
1 \le k \le m,
\end{split}
\end{equation*}
and set
\begin{equation*}
\BB(\mu)^{\ast m}=
 \underbrace{\BB(\mu) \ast \cdots \ast \BB(\mu)}_{\text{$m$ times}}:=
\bigl\{ \pi_{1} \ast \cdots \ast \pi_{m} \mid 
\pi_{k} \in \BB(\mu),\,1 \le k \le m \bigr\}. 
\end{equation*}
We endow $\BB(\mu)^{\ast m}$ with a crystal structure as follows. 
Let $\pi = \pi_{1} \ast \pi_{2} \ast \cdots \ast \pi_{m} \in \BB(\mu)^{\ast m}$. 
First we define $\wt(\pi):=\pi(1) = \pi_{1}(1) + \cdots + \pi_{m}(1)$; 
notice that $\pi(1) \in P$ since $\pi_{k}(1) \in P$ for all $1 \le k \le m$.
Next, for $i \in I$, we define $\ti{e}_{i}\pi$ and $\ti{f}_{i}\pi$ 
in exactly the same way as for elements in $\BB(\mu)$ (see \S\ref{subsec:LS}); 
notice that the condition \eqref{eq:eq106} holds 
for every element in $\BB(\mu)^{\ast m}$. We deduce that 
if $\ti{e}_{i}\pi \ne \bzero$, then 
$\ti{e}_{i}\pi = \pi_{1} \ast \cdots \ast \ti{e}_{i}\pi_{k} \ast \cdots \ast \pi_{m}$ 
for some $1 \le k \le m$; the same holds also for $\ti{f}_{i}$. 
Therefore the set $\BB(\mu)^{\ast m} \cup \{\bzero\}$ is stable under 
the action of $\ti{e}_{i}$ and $\ti{f}_{i}$ for $i \in I$. 
Finally, for $i \in I$, we set $\ve_{i}(\pi) := 
\max \{n \ge 0 \mid \ti{e}_{i}^{n}\pi \neq \bzero \}$ and 
$\vp_{i}(\pi) := \max \{n \ge 0 \mid \ti{f}_{i}^{n}\pi \neq \bzero \}$. 
We know the following proposition from \cite[\S2]{o10}. 

\begin{prop} \label{prop:cat}
Let $\mu \in P$ be an arbitrary integral weight, and $m \in \BZ_{\geq 1}$.
The set $\BB(\mu)^{\ast m}$ together with the maps 
$\wt:\BB(\mu)^{\ast m} \rightarrow P$, 
$\ti{e}_{i},\,\ti{f}_{i}:\BB(\mu)^{\ast m} \cup \{\bzero\} 
\rightarrow \BB(\mu)^{\ast m} \cup \{\bzero\}$, $i \in I$, and 
$\ve_{i},\,\vp_{i}:\BB(\mu)^{\ast m} \rightarrow \BZ_{\ge 0}$, $i \in I$, 
becomes a crystal. Moreover, the map 
\begin{equation} \label{eq:cat}
\BB(\mu)^{\ast m} \rightarrow \BB(\mu)^{\otimes m}, \ 
\pi_{1} \ast \cdots \ast \pi_{m} \mapsto \pi_{1} \otimes \cdots \otimes \pi_{m}, 
\end{equation}
is an isomorphism of crystals. 
\end{prop}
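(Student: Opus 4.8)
The plan is to deduce both assertions at once by transporting the crystal structure of the tensor product $\BB(\mu)^{\otimes m}$ across the concatenation map \eqref{eq:cat}. Write $\Phi$ for that map, $\pi_1 \ast \cdots \ast \pi_m \mapsto \pi_1 \otimes \cdots \otimes \pi_m$. First I would verify that $\Phi$ is a bijection: since the breakpoints $k/m$ ($0 \le k \le m$) are fixed, the values $\pi(k/m) = \sum_{l=1}^{k} \pi_l(1)$ recover all partial sums and hence each $\pi_l(1) \in P$, after which each factor is recovered by $\pi_k(s) = \pi\bigl((k-1+s)/m\bigr) - \sum_{l=1}^{k-1}\pi_l(1)$ for $s \in [0,1]$; thus $(\pi_1,\dots,\pi_m) \mapsto \pi_1 \ast \cdots \ast \pi_m$ is injective, and it is surjective onto $\BB(\mu)^{\ast m}$ by definition. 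Weight-preservation is immediate from $\wt(\pi_1 \ast \cdots \ast \pi_m) = \pi(1) = \sum_{k=1}^m \pi_k(1) = \sum_{k=1}^m \wt(\pi_k)$, the weight of $\pi_1 \otimes \cdots \otimes \pi_m$.

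The essential step is to show that $\Phi$ intertwines the Kashiwara operators, i.e. that the path operators $\ti{e}_i,\ti{f}_i$ on a concatenation correspond under $\Phi$ to the tensor-product operators on $\BB(\mu)^{\otimes m}$. For this I would analyze the height function of $\pi = \pi_1 \ast \cdots \ast \pi_m$. On the $k$-th subinterval $[(k-1)/m,\,k/m]$ one has
\begin{equation*}
H_i^{\pi}(t) = \sum_{l=1}^{k-1} H_i^{\pi_l}(1) + H_i^{\pi_k}(mt - k + 1),
\end{equation*}
so $H_i^{\pi}$ is the concatenation of the reparametrized graphs of $H_i^{\pi_1},\dots,H_i^{\pi_m}$, with the $k$-th graph raised by the integer partial sum $h_{k-1} := \sum_{l=1}^{k-1} H_i^{\pi_l}(1)$. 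In particular the integrality condition \eqref{eq:eq106} holds for $\pi$, since it holds for each $\pi_k$ and the shifts $h_{k-1}$ are integers.

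Next I would locate the global minimum and track how \eqref{eq:eq107} and \eqref{eq:eq109} select the active factor. The minimum of $H_i^{\pi}$ over the $k$-th block equals $h_{k-1} + m_i^{\pi_k}$, so $m_i^{\pi} = \min_{k}\bigl(h_{k-1} + m_i^{\pi_k}\bigr)$. For $\ti{e}_i$, the prescription \eqref{eq:eq107} modifies $\pi$ near the \emph{leftmost} global minimum, which sits in the block indexed by the smallest $k$ attaining this minimum; comparing the block minima by means of $\ve_i(\pi_k) = -m_i^{\pi_k}$ and $\vp_i(\pi_k) = H_i^{\pi_k}(1) - m_i^{\pi_k}$ shows that this choice of block reproduces exactly the signature rule behind the tensor product of \S\ref{subsec:crystal} (ties falling to the earlier factor, matching the inequality $\vp_i(b_1) \ge \ve_i(b_2)$). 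Consequently $\ti{e}_i\pi = \pi_1 \ast \cdots \ast \ti{e}_i\pi_k \ast \cdots \ast \pi_m$ for that $k$, or $\bzero$ when $m_i^{\pi}=0$; since $\ti{e}_i\pi_k \in \BB(\mu)$ by Theorem~\ref{n8}, this lies in $\BB(\mu)^{\ast m}$, so the set is stable under the operators, and the result matches the tensor-product action on $\pi_1 \otimes \cdots \otimes \pi_m$. The symmetric analysis for $\ti{f}_i$ uses the \emph{rightmost} minimum and the strict inequality $\vp_i(b_1) > \ve_i(b_2)$. Hence $\Phi$ commutes with $\ti{e}_i$ and $\ti{f}_i$, and since $\ve_i,\vp_i$ are defined on both sides by the same maximal-power formula in terms of these operators, it commutes with $\ve_i,\vp_i$ as well.

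To conclude, $\BB(\mu)^{\otimes m}$ is a crystal as a tensor product of the crystals $\BB(\mu)$ from Theorem~\ref{n8}, and $\Phi$ is a weight-preserving bijection commuting with every structure map; therefore the intrinsically defined maps on $\BB(\mu)^{\ast m}$ satisfy the crystal axioms by transport of structure, and $\Phi$ is an isomorphism of crystals. I expect the main obstacle to be the bookkeeping of the previous paragraph: one must match the leftmost/rightmost-minimum conventions of \eqref{eq:eq107} and \eqref{eq:eq109} against the strict versus non-strict inequalities in the tensor product rule, and confirm that these choices compose consistently across all $m$ blocks, not merely for $m=2$. This is precisely the content of Littelmann's concatenation analysis in \cite[\S2]{o10}, whose argument I would follow.
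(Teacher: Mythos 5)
Your proposal is correct and is essentially the argument the paper relies on: the paper gives no proof of Proposition~\ref{prop:cat}, simply quoting it from \cite[\S2]{o10}, and your sketch (block decomposition of the height function, leftmost/rightmost-minimum analysis matching the signature rule, transport of structure along the concatenation bijection) is precisely Littelmann's argument, including the correct tie-breaking conventions for $\ti{e}_i$ versus $\ti{f}_i$.
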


Let $\pi \in \BB(m\mu)$. If we set 
\begin{equation*}
\pi_{k}(t):=
  \pi\left(\frac{1}{m}t + \frac{k-1}{m}\right) - 
  \pi\left( \frac{k-1}{m} \right) \quad \text{for $t \in [0,1]$}
\end{equation*}
for $1 \le k \le m$, then $\pi_{k} \in \BB(\mu)$ for all $1 \le k \le m$, and 
$\pi = \pi_{1} \ast \cdots \ast \pi_{m}$ (see Example~\ref{20171126(5)} below), 
which implies that $\BB(m\mu)$ is contained in $\BB(\mu)^{\ast m}$, 
and hence is a subcrystal of $\BB(\mu)^{\ast m} \cong \BB(\mu)^{\otimes m}$.
\begin{ex} \label{20171126(5)}
Assume that $m=2$, and $\pi = (\nu_{1},\dots,\nu_{s};
\sigma_{0},\sigma_{1},\dots,\sigma_{s}) \in \BB(2\mu)$; 
for simplicity, assume that $\sigma_{u-1} < 1/2 < \sigma_{u}$ 
for some $1 \le u \le s$. Then, 
\begin{align*}
\pi_{1} & = \left( \frac{1}{2}\nu_{1},\dots,\frac{1}{2}\nu_{u};
2\sigma_{0},2\sigma_{1},\dots,2\sigma_{u-1},1 \right) \in \BB(\mu),  \\[3mm]
\pi_{2} & = \left(\frac{1}{2}\nu_{u},\dots,\frac{1}{2}\nu_{s};
0,2\sigma_{u}-1,\dots,2\sigma_{s}-1\right) \in \BB(\mu).
\end{align*}
\end{ex}

\begin{proof}[Proof of Proposition~\ref{prop:s1}]
We define $\Sigma_{m}:\BB(\mu) \hookrightarrow \BB(\mu)^{\otimes m}$ to be 
the composition of the following maps: 
\begin{equation*}
\BB(\mu) 
\xrightarrow{\Theta_{m}} \BB(m\mu) 
\xrightarrow{\subset} \BB(\mu)^{\ast m} 
\xrightarrow{\sim} \BB(\mu)^{\otimes m}.
\end{equation*}
Then we can easily check by the definitions of these maps and \eqref{eq:n1a} that 
$\Sigma_{m}$ satisfies the conditions \eqref{eq:n1}, and 
the diagram \eqref{eq:mn1} commutes.
\end{proof}
%
%
\subsection{Similarity of the crystal $\CB_0(\mu)$.}
\label{subsec:sim2}

\begin{prop} \label{prop:s2}
Let $\mu \in P$ be an arbitrary integral weight, 
and $m \in \BZ_{\ge 1}$. There exists a map 
$\Sigma_{m}:\CB_0(\mu) \hookrightarrow \CB_0(\mu)^{\otimes m} = 
\CB_0(\mu) \otimes \cdots \otimes \CB_0(\mu)$ ($m$ times) 
satisfying the following conditions\,{\rm:}
\begin{equation} \label{eq:n2}
\begin{split}
& \Sigma_{m}(u_{\mu}) = u_{\mu}^{\otimes m}, \\
& \wt (\Sigma_{m}(b)) = m \wt (b) 
  \quad \text{\rm for $b \in \CB_0(\mu)$}, \\
& \Sigma_{m}(\ti{e}_{i}b) = \ti{e}_{i}^{m}\Sigma_{m}(b), \ 
  \Sigma_{m}(\ti{f}_{i}b) = \ti{f}_{i}^{m}\Sigma_{m}(b)
  \quad \text{\rm for $b \in \CB_0(\mu)$ and $i \in I$}, \\
& \ve_{i} (\Sigma_{m}(b)) = m \ve_{i} (b), \ 
  \vp_{i} (\Sigma_{m}(b)) = m \vp_{i} (b)
  \quad \text{\rm for $b \in \CB_0(\mu)$ and $i \in I$},
\end{split}
\end{equation}
where we understand $\Sigma_{m}(\bzero)=\bzero$. Moreover, 
for every $m,n \in \BZ_{\ge 1}$, 
the following diagram commutes: 
\begin{equation} \label{eq:mn2}
\begin{split}
\xymatrix{
\CB_0(\mu) 
\ar[r]^-{\Sigma_{m}}
\ar[d]_{\Sigma_{mn}}
& \CB_0(\mu)^{\otimes m} 
\ar[ld]^{\Sigma_{n}^{\otimes m}} \\
\CB_0(\mu)^{\otimes mn}
}
\end{split}
\end{equation}
\end{prop}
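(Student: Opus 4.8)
The plan is to construct $\Sigma_{m}$ for $\CB_{0}(\mu)$ by imitating the two-step construction used for $\BB(\mu)$ in Proposition~\ref{prop:s1}: first a ``dilation'' similarity $\Theta_{m}:\CB_{0}(\mu)\to\CB_{0}(m\mu)$ that multiplies weights by $m$ and converts $\ti{e}_{i},\ti{f}_{i}$ into $\ti{e}_{i}^{m},\ti{f}_{i}^{m}$, and then a strict crystal embedding $\iota:\CB_{0}(m\mu)\hookrightarrow\CB_{0}(\mu)^{\otimes m}$ sending $u_{m\mu}$ to $u_{\mu}^{\otimes m}$; the desired map is the composite $\Sigma_{m}=\iota\circ\Theta_{m}$. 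With this factorization the conditions \eqref{eq:n2} and the commutativity of \eqref{eq:mn2} reduce to the corresponding properties of the two factors, exactly as in the LS-path case, where $\ti{e}_{i}^{m}$-intertwining comes entirely from $\Theta_{m}$ and $\iota$ is an ordinary (strict) morphism.

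For the embedding I would first check that $u_{\mu}^{\otimes m}\in\CB_{0}(\mu)^{\otimes m}$ is an extremal element of weight $m\mu$. The key point is the identity $S_{w}(u_{\mu}^{\otimes m})=(S_{w}u_{\mu})^{\otimes m}$ for all $w\in W$: since each factor $u_{\mu}$ is extremal, it satisfies $\ve_{i}(u_{\mu})=0$ whenever $\pair{\mu}{\alpha_{i}^{\vee}}\ge 0$ and $\vp_{i}(u_{\mu})=0$ whenever $\pair{\mu}{\alpha_{i}^{\vee}}\le 0$, so the tensor product rule forces $S_{i}$ to act one factor at a time, filling the factors up in order; induction on the length of $w$ gives the identity, and extremality of $u_{\mu}^{\otimes m}$ follows at once. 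Applying the universal property of the extremal weight module (Kashiwara \cite[Proposition~8.2.2]{o4}) to the extremal vector $v_{\mu}^{\otimes m}$ of weight $m\mu$ in $V(\mu)^{\otimes m}$, I obtain a morphism of crystals $\CB(m\mu)\to\CB(\mu)^{\otimes m}$ sending $u_{m\mu}$ to $u_{\mu}^{\otimes m}$; since it commutes with the Kashiwara operators, it restricts to a morphism $\CB_{0}(m\mu)\to\CB_{0}(\mu)^{\otimes m}$, whose strictness and injectivity are verified exactly as in \cite{o7}.

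For the dilation $\Theta_{m}$ I would invoke Kashiwara's similarity of crystal bases \cite{o13}, which provides injective maps $S_{m}^{\pm}:\CB(\pm\infty)\to\CB(\pm\infty)$ fixing $u_{\pm\infty}$, multiplying weights by $m$, intertwining $\ti{e}_{i}$ with $\ti{e}_{i}^{m}$ and $\ti{f}_{i}$ with $\ti{f}_{i}^{m}$, scaling $\ve_{i},\vp_{i}$ by $m$, and commuting with the $\ast$-operator. Using the realization $\CB(\mu)\subset\CB(\infty)\otimes\CT_{\mu}\otimes\CB(-\infty)$ of Proposition~\ref{20171126(3)}, I would set $\Theta_{m}(b_{1}\otimes t_{\mu}\otimes b_{2}):=S_{m}^{+}(b_{1})\otimes t_{m\mu}\otimes S_{m}^{-}(b_{2})$. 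Because $S_{m}^{\pm}$ commute with $\ast$ and the $\ast$-operator on the tensor product is given by \eqref{eq26} with the middle weight scaling correctly (here $\wt(S_{m}^{+}b_{1})=m\wt(b_{1})$, so that $-m\mu-m\wt(b_{1})-m\wt(b_{2})=m(-\mu-\wt(b_{1})-\wt(b_{2}))$, consistently with Lemma~\ref{n11} and Remark~\ref{rem:wt}), one checks that $\Theta_{m}$ carries the $\ast$-extremal subset $\CB(\mu)$ into $\CB(m\mu)$ and the component $\CB_{0}(\mu)$ into $\CB_{0}(m\mu)$.

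The main obstacle I expect is verifying that $\Theta_{m}$ genuinely intertwines $\ti{e}_{i}$ with $\ti{e}_{i}^{m}$ and $\ti{f}_{i}$ with $\ti{f}_{i}^{m}$ on $\CB_{0}(\mu)$. Since the Kashiwara operators on $\CB(\mu)$ are computed through the tensor product rule inside $\CB(\infty)\otimes\CT_{\mu}\otimes\CB(-\infty)$, I must show that when all the $\ve_{i}$ and $\vp_{i}$ thresholds appearing in that rule are multiplied by $m$ (which is precisely the content of $\ve_{i}\circ S_{m}^{\pm}=m\,\ve_{i}$ and $\vp_{i}\circ S_{m}^{\pm}=m\,\vp_{i}$), one application of $\ti{e}_{i}$ or $\ti{f}_{i}$ on $\Theta_{m}(b)$ corresponds to $m$ applications on $b$, with no ``carrying'' across the $t_{m\mu}$ factor. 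This is a careful but routine bookkeeping argument on the tensor product rule; once it is in place, the conditions \eqref{eq:n2} are immediate, and the commutativity of \eqref{eq:mn2} follows from $S_{n}^{\pm}\circ S_{m}^{\pm}=S_{mn}^{\pm}$ together with the compatibility of the embeddings $\iota$, exactly as \eqref{eq:mn1} is obtained in Proposition~\ref{prop:s1}.
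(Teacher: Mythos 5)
Your proposal is correct and follows essentially the same route as the paper: the same factorization $\Sigma_{m}=\Psi_{m}\circ\Theta_{m}$, with the embedding $\CB_{0}(m\mu)\hookrightarrow\CB_{0}(\mu)^{\otimes m}$ obtained from the universal property of $V(m\mu)$ applied to the extremal vector $v_{\mu}^{\otimes m}$. The only differences are that the paper simply cites \cite[Theorem~3.7]{o7} for the dilation $\Theta_{m}$ (whose proof is precisely the $\CB(\infty)\otimes\CT_{\mu}\otimes\CB(-\infty)$ similarity argument you sketch), and that the step you defer to \cite{o7} --- upgrading the module homomorphism $\Phi_{m}$ to a crystal morphism --- is where the paper's Lemma~\ref{lem:n3} does genuine work, constructing the sublattice $\CL_{0}(m\mu)$ spanned by $X_{b}v_{m\mu}$ and proving $\CL_{0}(m\mu)\cap\q\CL(m\mu)=\q\CL_{0}(m\mu)$ via the global basis, which is also why the crystal morphism is only defined on the connected component $\CB_{0}(m\mu)$ rather than on all of $\CB(m\mu)$ as your wording suggests.
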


We know the following proposition from \cite[Theorem 3.7]{o7}. 

\begin{prop} \label{prop:n2}
Let $\mu \in P$ be an arbitrary integral weight, 
and $m \in \BZ_{\ge 1}$. There exists an injective map 
$\Theta_{m}:\CB(\mu) \hookrightarrow \CB(m\mu)$
satisfying the following conditions:
\begin{equation} \label{eq:n2a}
\begin{split}
& \Theta_{m}(u_{\mu}) = u_{m\mu},  \\
& \wt (\Theta_{m}(b)) = m \wt (b) 
  \quad \text{\rm for $b \in \CB(\mu)$}, \\
& \Theta_{m}(\ti{e}_{i}b) = \ti{e}_{i}^{m}\Theta_{m}(b), \ 
  \Theta_{m}(\ti{f}_{i}b) = \ti{f}_{i}^{m}\Theta_{m}(b)
  \quad \text{\rm for $b \in \CB(\mu)$ and $i \in I$}, \\
& \ve_{i} (\Theta_{m}(b)) = m \ve_{i} (b), \ 
  \vp_{i} (\Theta_{m}(b)) = m \vp_{i} (b)
  \quad \text{\rm for $b \in \CB(\mu)$ and $i \in I$},
\end{split}
\end{equation}
where we understand $\Theta_{m}(\bzero)=\bzero$.
\end{prop}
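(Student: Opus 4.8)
The plan is to transport a similarity map from the ``$\pm\infty$'' crystals to $\CB(\mu)$ through the realization $\CB(\mu) \subset \CB(\infty) \otimes \CT_{\mu} \otimes \CB(-\infty)$ of Proposition~\ref{20171126(3)}. Concretely, I would first establish, for each $m \in \BZ_{\ge 1}$, the existence of similarity maps $S^{+}_{m}:\CB(\infty) \to \CB(\infty)$ and $S^{-}_{m}:\CB(-\infty) \to \CB(-\infty)$ fixing $u_{\infty}$ and $u_{-\infty}$, multiplying weights by $m$, intertwining $\ti{f}_{i}$ with $\ti{f}_{i}^{m}$ (resp.\ $\ti{e}_{i}$ with $\ti{e}_{i}^{m}$), and multiplying $\ve_{i},\vp_{i}$ by $m$. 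The existence of such a map on $\CB(\infty)$ is the genuine input: it can be obtained from Kashiwara's similarity of crystal bases, or by a limiting argument from the highest-weight crystals $\CB(\Lambda)$ ($\Lambda$ dominant), where the dilation $\pi \mapsto m\pi$ of LS paths as in \eqref{eq:n1a} already furnishes it. The crucial extra property I would record is that $S^{\pm}_{m}$ commutes with the $\ast$-operation on $\CB(\pm\infty)$; on the strings $\ti{f}_{i}^{k}u_{\infty}$, $\ti{e}_{i}^{k}u_{-\infty}$ this is automatic by Lemma~\ref{n11}, and in general it follows by uniqueness of the ordinary similarity once one knows that $S^{+}_{m}$ can be built so as to be a similarity for \emph{both} the ordinary and the $\ast$-crystal structures on $\CB(\infty)$ (so that $\ast \circ S^{+}_{m} \circ \ast$ is again an ordinary similarity fixing $u_{\infty}$, hence equals $S^{+}_{m}$).

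Next I would assemble these into a single map on the ambient crystal $\CB = \bigsqcup_{\mu} \CB(\infty) \otimes \CT_{\mu} \otimes \CB(-\infty)$ by the factor-wise formula
\[
\Theta_{m}(b_{1} \otimes t_{\mu} \otimes b_{2}) := S^{+}_{m}(b_{1}) \otimes t_{m\mu} \otimes S^{-}_{m}(b_{2}).
\]
That this is again a similarity (with factor $m$) is the standard ``tensor product of similarities is a similarity'' lemma: since $\ve_{i},\vp_{i}$ on every tensor factor scale by $m$ (with the conventions $\ve_{i}(t_{m\mu}) = \vp_{i}(t_{m\mu}) = -\infty$), all the comparisons of $\vp_{i}$ versus $\ve_{i}$ in the tensor product rule are preserved under scaling, so a single application of $\ti{e}_{i}$ (resp.\ $\ti{f}_{i}$) to $b$ acts on the same tensor factor as the block of $m$ applications of $\ti{e}_{i}$ (resp.\ $\ti{f}_{i}$) to $\Theta_{m}(b)$. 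I would carry out this bookkeeping factor by factor to verify all four conditions in \eqref{eq:n2a}. Using the formula \eqref{eq26} for $\ast$ on the tensor product together with the $\ast$-compatibility of $S^{\pm}_{m}$, a short computation then gives $\Theta_{m}(b)^{\ast} = \Theta_{m}(b^{\ast})$, i.e.\ $\Theta_{m}$ commutes with $\ast$ on all of $\CB$.

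It then remains to show that $\Theta_{m}$ sends $\CB(\mu)$ into $\CB(m\mu)$. Because $\Theta_{m}$ intertwines $\ti{e}_{i}^{\,\ve_{i}(b)},\ti{f}_{i}^{\,\vp_{i}(b)}$ with $\ti{e}_{i}^{\,m\ve_{i}(b)},\ti{f}_{i}^{\,m\vp_{i}(b)}$, it commutes with each $S_{i}$ of \eqref{Si}, hence with the whole Weyl group action $S_{w}$; consequently $\Theta_{m}$ carries extremal elements to extremal elements (if $\ti{e}_{i}(S_{w}x)=\bzero$ then $\ve_{i}(S_{w}x)=0$, so $\ve_{i}(\Theta_{m}(S_{w}x))=m\ve_{i}(S_{w}x)=0$, and likewise for $\ti{f}_{i}$). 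Combined with $\Theta_{m}(b)^{\ast} = \Theta_{m}(b^{\ast})$ and the weight computation of Remark~\ref{rem:wt}, this shows that if $b^{\ast}$ is extremal of weight $-\mu$, then $\Theta_{m}(b)^{\ast} = \Theta_{m}(b^{\ast})$ is extremal of weight $-m\mu$; by the description \eqref{eq07} in Proposition~\ref{20171126(3)} this is exactly the statement $\Theta_{m}(b) \in \CB(m\mu)$. The normalization $\Theta_{m}(u_{\mu}) = u_{m\mu}$ is immediate from $S^{\pm}_{m}(u_{\pm\infty}) = u_{\pm\infty}$, and injectivity of $\Theta_{m}$ follows from injectivity of $S^{\pm}_{m}$.

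The main obstacle is the first step: producing the similarity $S^{+}_{m}$ on $\CB(\infty)$ and, above all, proving that it commutes with the $\ast$-operation --- equivalently, that it is simultaneously a similarity for the ordinary and the $\ast$-crystal structures on $\CB(\infty)$. Everything downstream (the tensor-product bookkeeping, the transfer of extremality through $S_{w}$, and injectivity) is routine once this $\ast$-compatible similarity on $\CB(\pm\infty)$ is in hand.
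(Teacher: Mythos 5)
The paper does not actually prove this proposition: it is imported wholesale, with the one\--line justification ``We know the following proposition from \cite[Theorem 3.7]{o7}.'' So there is no internal proof to compare against; the natural benchmark is the proof of that cited theorem, and your plan reproduces its strategy: Kashiwara similarity maps on $\CB(\pm\infty)$, the factor\--wise definition $b_1 \otimes t_\mu \otimes b_2 \mapsto S^{+}_{m}(b_1) \otimes t_{m\mu} \otimes S^{-}_{m}(b_2)$ on the ambient crystal $\CB$, commutation with $\ast$ via \eqref{eq26}, and transfer of extremality through the Weyl group action so that, by the characterization \eqref{eq07} in Proposition~\ref{20171126(3)}, the subcrystal $\CB(\mu)$ lands inside $\CB(m\mu)$. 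All of that downstream bookkeeping in your write\--up is correct (including the point that normality lets you convert $\ti{e}_i b = \bzero$ into $\ve_i(b)=0$, which is what makes extremality transfer through the scaling of $\ve_i,\vp_i$).

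The gap is exactly where you locate it, and it is worth saying precisely why your sketch does not close it. Your uniqueness argument for $\ast$\--compatibility is conditional: it works only \emph{once one knows} that $S^{+}_{m}$ is a similarity simultaneously for the ordinary and the $\ast$\--crystal structures on $\CB(\infty)$, and nothing in the proposal produces such a map. Lemma~\ref{n11} only gives $\ast$\--invariance on the rank\--one strings $\ti{f}_i^k u_\infty$, which constrains nothing off those strings; and the map $\ast \circ S^{+}_{m} \circ \ast$ is, a priori, a similarity for the $\ast$\--structure only, so comparing it with $S^{+}_{m}$ by ``uniqueness of the ordinary similarity'' is circular without the bi\--structure input. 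This $\ast$\--compatibility is a genuine theorem, not a formality --- it is the substantive step that Naito--Sagaki must establish in \cite{o7} on the way to their Theorem 3.7. So, judged as a self\--contained argument, your proposal is incomplete at its declared crux; judged as a reduction of the proposition to Kashiwara's similarity theorem plus a citable $\ast$\--compatibility lemma, it is an accurate account of how the literature proof goes --- and, as such, it supplies strictly more detail than the paper itself does.
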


\begin{rem} \label{rem:n2} \mbox{}
\begin{enu}
\item By \eqref{eq:n2a},  
the image $\Theta_{m}(\CB_0(\mu))$ of $\CB_0(\mu)$ under the map 
$\Theta_{m}:\CB(\mu) \rightarrow \CB(m\mu)$ is contained in $\CB_0(m\mu)$. 

\item Let $b \in \CB(\mu)$. 
Let $i \in I$, and let $\ti{x}_{i}$ be either $\ti{e}_{i}$ or $\ti{f}_{i}$. 
We see by \eqref{eq:n2a} that 
if $\ti{x}_{i}^{m}\Theta_{m}(b) \ne 0$, then $\ti{x}_{i}b \ne 0$\,{\rm;}
note that $\ti{x}_{i}b \in \CB(\mu)$, and 
$\ti{x}_{i}^{m}\Theta_{m}(b) = \Theta_{m}(\ti{x}_{i}b) \in \Theta_{m}(\CB(\mu))$. 
\end{enu}
\end{rem}

For convenience, we set $\ti{e}_{i}^{-1}:=\ti{f}_{i}$ and 
$\ti{f}_{i}^{-1}:=\ti{e}_{i}$ for $i \in I$. Also, 
for a monomial $X=\ti{x}_{i_s} \cdots \ti{x}_{i_1}$ in Kashiwara operators, 
where $\ti{x}_{i_u}$ is either $\ti{e}_{i_u}$ or $\ti{f}_{i_u}$ for each $1 \le u \le s$, 
we set $X^{(m)}:=\ti{x}_{i_s}^{m} \cdots \ti{x}_{i_1}^{m}$ and 
$X^{-1}:=\ti{x}_{i_1}^{-1} \cdots \ti{x}_{i_s}^{-1}$. 

\begin{lem} \label{lem:n3}
Let $\mu \in P$ be an arbitrary integral weight, and $m \in \BZ_{\ge 1}$.
There exists a strict morphism 
$\Psi_{m}:\CB_{0}(m\mu) \rightarrow \CB_0(\mu)^{\otimes m}$ 
of crystals (in the sense of \cite[\S7.6]{o2})
that sends $u_{m\mu}$ to $u_{\mu}^{\otimes m}$.
\end{lem}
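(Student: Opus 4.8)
The aim is to construct a strict crystal morphism $\Psi_{m}\colon \CB_{0}(m\mu) \to \CB_{0}(\mu)^{\otimes m}$ carrying $u_{m\mu}$ to $u_{\mu}^{\otimes m}$. The plan is to produce it first at the level of modules, via the universal property of the extremal weight module, and then to descend it to crystal bases by the similarity technique of \cite[Theorem~3.7]{o7} that already underlies Proposition~\ref{prop:n2}. \emph{Step 1.} I would first check that $u_{\mu}^{\otimes m}$ is an extremal element of weight $m\mu$ in the normal crystal $\CB(\mu)^{\otimes m}$; concretely, using that each factor $u_{\mu}$ is extremal one verifies $S_{w}\bigl(u_{\mu}^{\otimes m}\bigr) = (S_{w}u_{\mu})^{\otimes m} = u_{w\mu}^{\otimes m}$ for all $w \in W$, after which the tensor product rule gives $\ti{e}_{i}(S_{w}(u_{\mu}^{\otimes m})) = \bzero$ (resp.\ $\ti{f}_{i}(S_{w}(u_{\mu}^{\otimes m})) = \bzero$) whenever $\pair{wm\mu}{\alpha_{i}^{\vee}} \ge 0$ (resp.\ $\le 0$), since every factor then has $\ve_{i} = 0$ (resp.\ $\vp_{i} = 0$). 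Equivalently, $v_{\mu}^{\otimes m} \in V(\mu)^{\otimes m}$ is an extremal weight vector of weight $m\mu$.

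\emph{Step 2.} Since $V(\mu)^{\otimes m}$ is an integrable $U_{\q}(\Fg)$-module and $v_{\mu}^{\otimes m}$ is extremal of weight $m\mu$, the defining (universal) property of $V(m\mu)$ from \cite[Proposition~8.2.2]{o4} yields a $U_{\q}(\Fg)$-homomorphism $V(m\mu) \to V(\mu)^{\otimes m}$ sending $v_{m\mu}$ to $v_{\mu}^{\otimes m}$. \emph{Step 3.} The real work is to descend this to crystals: one must show the homomorphism carries the crystal lattice $\CL(m\mu)$ into $\CL(\mu)^{\otimes m}$ and hence induces a map $\CB(m\mu) \to \CB(\mu)^{\otimes m} \sqcup \{\bzero\}$ that commutes with all $\ti{e}_{i}$ and $\ti{f}_{i}$, i.e.\ a strict morphism sending $u_{m\mu}$ to $u_{\mu}^{\otimes m}$. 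This I would carry out exactly as in the construction of $\Theta_{m}$ in \cite[Theorem~3.7]{o7}, exploiting the similarity relations $\ti{x}_{i}^{m}\Theta_{m}(b) = \Theta_{m}(\ti{x}_{i}b)$ of Proposition~\ref{prop:n2} and Remark~\ref{rem:n2} to control the Kashiwara operators. Restricting the resulting strict morphism to the connected component $\CB_{0}(m\mu)$ of $u_{m\mu}$, the image is connected and contains $u_{\mu}^{\otimes m}$, hence lies in $\CB_{0}(\mu)^{\otimes m}$; I then set $\Psi_{m}$ to be this restriction.

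I expect Step~3 to be the main obstacle. It can be recast purely combinatorially: because $\CB_{0}(m\mu)$ is the connected component of $u_{m\mu}$, every element is of the form $X u_{m\mu}$ for a monomial $X$ in the Kashiwara operators, and the only candidate for $\Psi_{m}$ is $X u_{m\mu} \mapsto X u_{\mu}^{\otimes m}$. Proving this is well defined (independent of $X$) and strict reduces, by induction along the crystal graph starting from the base point, to showing that $\wt$, $\ve_{i}$, and $\vp_{i}$ are preserved at every vertex and that the image is never $\bzero$ when the source is nonzero, so that no edge is collapsed. For the index being applied this is immediate from normality (Remark~\ref{20171125(2)}); the genuinely nonformal point is the preservation of $\ve_{j}$ and $\vp_{j}$ for the \emph{other} index $j \ne i$ after an application of $\ti{e}_{i}$ or $\ti{f}_{i}$, and it is exactly here that the similarity map $\Theta_{m}$, rather than the raw crystal axioms, is needed. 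The overall scheme parallels \cite[Theorem~5.1]{o7} and \cite[Theorem~4.1]{o13}.
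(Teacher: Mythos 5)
Your Steps 1 and 2 coincide with the paper's opening moves: $v_{\mu}^{\otimes m}$ is extremal of weight $m\mu$, and the universality of $V(m\mu)$ produces a $U_{\q}(\Fg)$-homomorphism $\Phi_{m}:V(m\mu)\rightarrow V(\mu)^{\otimes m}$ commuting with the Kashiwara operators. The gap is exactly where you predict it, in Step 3, and neither of the two routes you sketch for closing it is viable. First, you propose to show that $\Phi_{m}$ carries the whole crystal lattice $\CL(m\mu)$ into $\CL(\mu)^{\otimes m}$ ``exactly as in the construction of $\Theta_{m}$''; but $\Theta_{m}$ is a different map ($\CB(\mu)\hookrightarrow\CB(m\mu)$, Kashiwara's similarity of crystal bases), and the relations $\ti{x}_{i}^{m}\Theta_{m}(b)=\Theta_{m}(\ti{x}_{i}b)$ say nothing about whether a module homomorphism out of $V(m\mu)$ preserves crystal lattices. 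That full-lattice statement is stronger than what the lemma needs and you give no argument for it. Second, your combinatorial recasting $Xu_{m\mu}\mapsto Xu_{\mu}^{\otimes m}$ begs the question: well-definedness of this assignment (that $X_{1}u_{m\mu}=X_{2}u_{m\mu}$ forces $X_{1}u_{\mu}^{\otimes m}=X_{2}u_{\mu}^{\otimes m}$) is essentially the content of the lemma, and it does not follow from normality or from an induction along the crystal graph; indeed the analogous statement in the reverse direction is precisely what Lemma~\ref{lem:equal} later has to prove, and it requires the extra hypothesis $\CB_{0}(\mu)_{\mu}=\{u_{\mu}\}$, which Lemma~\ref{lem:n3} does not assume.

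The missing idea is the paper's intermediate lattice. One fixes, for each $b\in\CB_{0}(m\mu)$, a monomial $X_{b}$ with $b=X_{b}u_{m\mu}$, and lets $\CL_{0}(m\mu)$ be the $\BA_{0}$-span of the vectors $X_{b}v_{m\mu}$. Since $\Phi_{m}$ commutes with the Kashiwara operators and $v_{\mu}^{\otimes m}\in\CL(\mu)^{\otimes m}$, this sublattice automatically lands in $\CL(\mu)^{\otimes m}$, so no lattice-preservation statement for all of $\CL(m\mu)$ is needed. The genuinely nonformal step, which you do not address, is to show $\CL_{0}(m\mu)\cap\q\CL(m\mu)=\q\CL_{0}(m\mu)$; the paper does this by expanding $X_{b}v_{m\mu}$ in the global basis as $(1+\q\BA_{0})G(b)+\sum_{b'\neq b}\q\BA_{0}G(b')$. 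This identity is what makes the induced map on $\CL_{0}(m\mu)/\q\CL_{0}(m\mu)\subset\CL(m\mu)/\q\CL(m\mu)$ well defined on $\CB_{0}(m\mu)$, independently of the choices $X_{b}$, because it is induced from the module map $\Phi_{m}$. Finally, the fact that no element of $\CB_{0}(m\mu)$ is sent to $0$ follows from the short $X^{-1}$ argument ($0=X^{-1}\Psi_{m}(Xu_{m\mu})=\Psi_{m}(u_{m\mu})=u_{\mu}^{\otimes m}$ would be absurd), which you do not need $\ve_{j}$, $\vp_{j}$ bookkeeping for. Without the sublattice $\CL_{0}(m\mu)$ and the global-basis computation, your proposal does not yield the descent from $\Phi_{m}$ to a map of crystals.
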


\begin{proof}
Recall that the crystal lattice $\CL(m\mu)$ of $V(m\mu)$ is 
a free $\BA_{0}$-module which is stable under 
the Kashiwara operators on $V(m\mu)$ and 
contains the extremal weight vector $v_{m\mu} \in V(m\mu)$. 
For each element $b \in \CB_{0}(m\mu)$, 
we fix a monomial $X_{b}$ in Kashiwara operators such that $b=X_{b}u_{m\mu}$;
notice that $X_b v_{m\mu} \in \CL(\mu)$, and in $\CL(m\mu)/\q\CL(m\mu)$, 
\begin{equation} \label{eq:psi1}
X_{b} v_{m\mu} + \q\CL(m\mu) = 
X_{b} (v_{m\mu} + \q\CL(m\mu)) = 
X_{b}u_{m\mu} = b 
\end{equation}
for every $b \in \CB_{0}(m\mu)$. 
Define $\CL_{0}(m\mu)$ to be the $\BA_{0}$-submodule of $\CL(m\mu)$ 
generated by $\bigl\{ X_{b} v_{m\mu} \mid b \in \CB_{0}(m\mu) \bigr\} 
\subset \CL(m\mu)$. 
Recall that the global basis $\bigl\{G(b) \mid b \in \CB(m\mu) \bigr\}$ of $V(m\mu)$ 
is also an $\BA_{0}$-basis of $\CL(m\mu)$. 
Since $G(b) + \q\CL(\mu) = b$ in $\CL(m\mu)/\q\CL(m\mu)$, 
we see from \eqref{eq:psi1} that for each $b \in \CB_{0}(m\mu)$, 
\begin{equation} \label{eq:psi2}
X_{b} v_{m\mu} \in (1 + \q\BA_{0}) G(b) + 
 \sum_{b' \in \CB(m\mu),\,b'\ne b} \q\BA_{0} G(b').
\end{equation}
Here we claim that
\begin{equation} \label{eq:psi3}
\CL_{0}(m\mu) \cap \q\CL(m\mu) = \q\CL_{0}(m\mu). 
\end{equation}
The inclusion $\supset$ is obvious. 
Let us show the opposite inclusion $\subset$. 
Assume that 
\begin{equation*}
v = \sum_{b \in \CB_{0}(m\mu)} c_{b}(\q) X_{b} v_{m\mu} \in \CL_{0}(m\mu), \quad 
\text{with $c_{b}(\q) \in \BA_{0}$}, 
\end{equation*}
is contained also in $\q\CL(m\mu)$. 
By \eqref{eq:psi1} and \eqref{eq:psi2}, 
we see that in $\CL(m\mu)/\q\CL(m\mu)$, 
\begin{equation*}
0 + \q\CL(m\mu) = v + \q\CL(m\mu) = 
\sum_{b \in \CB_{0}(m\mu)} c_{b}(0) b. 
\end{equation*}
Therefore, $c_{b}(0) = 0$ for all $b \in \CB_{0}(m\mu)$, 
and hence $c_{b}(\q) \in \q\BA_{0}$ for all $b \in \CB_{0}(m\mu)$. 
Thus we obtain $v \in \q\CL_{0}(m\mu)$, as desired. 
Therefore, as $\BC$-vector spaces,
\begin{equation}
\CL_{0}(m\mu)/\q\CL_{0}(m\mu)  = 
\CL_{0}(m\mu)/(\CL_{0}(m\mu) \cap \q\CL(m\mu)) \subset 
\CL(m\mu)/\q\CL(m\mu). 
\end{equation}

Consider the tensor product $U_{\q}(\Fg)$-module 
$V(\mu)^{\otimes m}=V(\mu) \otimes \cdots \otimes V(\mu)$ ($m$ times). 
We deduce that $v_{\mu}^{\otimes m}=v_{\mu} \otimes \cdots \otimes v_{\mu} 
\in V(\mu)^{\otimes m}$ is an extremal weight vector of weight $m\mu$. 
Hence, by the universality of extremal weight modules, there exists a $U_{\q}(\Fg)$-module
homomorphism $\Phi_{m}:V(m\mu) \rightarrow V(\mu)^{\otimes m}$
that sends $v_{m\mu}$ to $v_{\mu}^{\otimes m}$; 
note that this map commutes with the Kashiwara operators on $V(m\mu)$ and 
$V(\mu)^{\otimes m}$. 
Recall that $V(\mu)^{\otimes m}$ has 
a crystal basis $(\CL(\mu)^{\otimes m}, \CB(\mu)^{\otimes m})$. 
Because $v_{\mu}^{\otimes m} \in \CL(\mu)^{\otimes m}$, 
we see that
the restriction of $\Phi_{m}:V(m\mu) \rightarrow V(\mu)^{\otimes m}$ 
to $\CL_{0}(m\mu) \subset V(m\mu)$ gives an $\BA_{0}$-module homomorphism 
from $\CL_{0}(m\mu)$ to $\CL(\mu)^{\otimes m}$. 
By \eqref{eq:psi3}, we obtain the induced $\BC$-linear map
\begin{equation}
\begin{split}
& \Psi_{m}:\CL_{0}(m\mu)/(\CL_{0}(m\mu) \cap \q\CL(m\mu)) 
  =\CL_{0}(m\mu)/\q\CL_{0}(m\mu) \rightarrow 
 \CL(\mu)^{\otimes m}/\q\CL(\mu)^{\otimes m}.
\end{split}
\end{equation}
Because $\Psi_{m}(u_{m\mu}) = u_{\mu}^{\otimes m}$ by the definition of $\Psi_{m}$, 
and because the map $\Psi_{m}$ commutes with the Kashiwara operators, 
we deduce that the image of 
$\CB_{0}(m\mu) \subset 
\CL_{0}(m\mu)/(\CL_{0}(m\mu) \cap \q\CL(m\mu))$ 
under the map $\Psi_{m}$ above is included in 
$\CB_0(\mu)^{\otimes m} \cup \{0\}$. 
Suppose that $\Psi_{m}(b) = 0$ for some $b \in \CB_{0}(m\mu)$. 
Write $b$ as: $b=Xu_{m\mu}$ for some monomial $X$ in Kashiwara operators. 
We have 
\begin{equation*}
0 = X^{-1} 0 
 = X^{-1} \Psi_{m}(b) 
 = \Psi_{m}(X^{-1} b )  = \Psi_{m}(X^{-1} X u_{m\mu} )
 = \Psi_{m}(u_{m\mu})=u_{\mu}^{\otimes m},
\end{equation*}
which is a contradiction. Thus we get 
$\Psi_{m}(\CB_{0}(m\mu)) \subset \CB_0(\mu)^{\otimes m}$. 
It is obvious that $\Psi_{m}:\CB_{0}(m\mu) \rightarrow \CB_0(\mu)^{\otimes m}$ 
is a strict morphism of crystals. Thus we have proved Lemma~\ref{lem:n3}. 
\end{proof}

\begin{proof}[Proof of Proposition~\ref{prop:s2}]
We define $\Sigma_{m}:\CB_0(\mu) \hookrightarrow \CB_0(\mu)^{\otimes m}$ to be 
the composition of the following maps (see also Remark \ref{rem:n2} (1)): 
\begin{equation*}
\CB_0(\mu) 
\xrightarrow{\Theta_{m}} \CB_0(m\mu) 
\xrightarrow{\Psi_{m}} \CB_0(\mu)^{\otimes m}.
\end{equation*}
It can be easily checked that the map $\Sigma_{m}=\Psi_{m} \circ \Theta_{m}$ above 
satisfies the conditions \eqref{eq:n2}. 
Also, we can show in exactly the same way as \cite[Proposition 8.3.2 (2)]{KaF} and 
\cite[Lemma 3.10]{o7} that the diagram \eqref{eq:mn2} commutes. 
Thus we have proved Proposition~\ref{prop:s2}. 
\end{proof}

\begin{cor} \label{20171126(6)}
Let $\mu \in P$, and $m \in \BZ_{\ge 1}$. 
If $\CB_0(\mu)_\mu = \{u_\mu\}$, then 
the map $\sum_m : \CB_0(\mu) \rightarrow \CB_0(\mu)^{\otimes m}$ is injective.
\end{cor}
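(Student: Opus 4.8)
The plan is to work directly with the ``similarity'' map $\Sigma_m$ produced by Proposition~\ref{prop:s2} together with the connectedness of $\CB_0(\mu)$, rather than analyzing the factors $\Theta_m$ and $\Psi_m$ separately. The guiding observation is that $\Sigma_m$ intertwines each Kashiwara operator with its $m$-th power (the third line of \eqref{eq:n2}), so that for any monomial $Y=\ti{x}_{j_t}\cdots\ti{x}_{j_1}$ in the Kashiwara operators one has the ``monomial similarity'' identity $\Sigma_m(Yc)=Y^{(m)}\Sigma_m(c)$ for all $c$, obtained by iterating \eqref{eq:n2} (with the convention $\Sigma_m(\bzero)=\bzero$, which is consistent because $\ve_i(\Sigma_m(c))=m\ve_i(c)$ and $\vp_i(\Sigma_m(c))=m\vp_i(c)$). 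I would record this identity first, since it is the workhorse of the argument.

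Next, given $b,b'\in\CB_0(\mu)$ with $\Sigma_m(b)=\Sigma_m(b')$, I would use that $\CB_0(\mu)$ is by definition the connected component of $u_\mu$: one can choose a monomial $X=\ti{x}_{i_s}\cdots\ti{x}_{i_1}$ with $b=Xu_\mu$ along a genuine path (each partial application to $u_\mu$ nonzero), so that $X^{-1}Xu_\mu=u_\mu$ by the usual reversibility of Kashiwara operators ($\ti{x}_i^{-1}\ti{x}_i c=c$ whenever $\ti{x}_i c\neq\bzero$). Applying the monomial similarity identity to the monomial $X^{-1}$ would then give
\begin{equation*}
\Sigma_m(X^{-1}b')=(X^{-1})^{(m)}\Sigma_m(b')=(X^{-1})^{(m)}\Sigma_m(b)=\Sigma_m(X^{-1}b)=\Sigma_m(X^{-1}Xu_\mu)=\Sigma_m(u_\mu)=u_\mu^{\otimes m}.
\end{equation*}
In particular $X^{-1}b'\neq\bzero$, so $b'':=X^{-1}b'\in\CB_0(\mu)$, and the weight relation $\wt(\Sigma_m(b''))=m\wt(b'')$ together with $\wt(u_\mu^{\otimes m})=m\mu$ would force $\wt(b'')=\mu$.

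The hard part, and the only place the hypothesis enters, is passing back from $\Sigma_m(b'')=u_\mu^{\otimes m}$ to $b''=u_\mu$: injectivity of $\Sigma_m$ is exactly what is being proved, so it cannot be invoked here. This is where I would use the assumption $\CB_0(\mu)_\mu=\{u_\mu\}$: since $b''\in\CB_0(\mu)$ has weight $\mu$, it must equal $u_\mu$. Thus $X^{-1}b'=u_\mu\neq\bzero$, and reversing the monomial $X^{-1}$ once more (again all partial applications are nonzero because the outcome $u_\mu$ is nonzero) yields $b'=Xu_\mu=b$, proving injectivity. I expect no serious difficulty beyond carefully bookkeeping the reversibility of Kashiwara monomials and verifying that the hypothesis is genuinely indispensable: without it, a second weight-$\mu$ element of $\CB_0(\mu)$ could be collapsed onto $u_\mu^{\otimes m}$, and injectivity would fail.
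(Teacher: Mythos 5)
Your argument is correct and follows essentially the same route as the paper: write one of the two elements as a monomial $X$ applied to $u_\mu$, apply $X^{-1}$ to the common image, use the intertwining property $\Sigma_m(Yc)=Y^{(m)}\Sigma_m(c)$ to conclude that $X^{-1}$ applied to the other element is a nonzero weight-$\mu$ element of $\CB_0(\mu)$, and invoke the hypothesis $\CB_0(\mu)_\mu=\{u_\mu\}$. The only (cosmetic) difference is that the paper threads the nonvanishing step through the factorization $\Sigma_m=\Psi_m\circ\Theta_m$ and Remark~\ref{rem:n2}\,(2), whereas you deduce it directly from the convention $\Sigma_m(\bzero)=\bzero$.
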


\begin{proof}
Assume that $\Sigma_{m}(b_{1})=\Sigma_{m}(b_{2})$ for some $b_{1},b_{2} \in \CB(\mu)$. 
Write $b_{2}$ as: $b_{2}=X_{2}u_{\mu}$ 
for some monomial $X_{2}$ in Kashiwara operators. 
Then we have $\Sigma_{m}(b_{2})=X_{2}^{(m)}u_{\mu}^{\otimes m}$, and 
\begin{align*}
\Psi_{m}((X_{2}^{(m)})^{-1}\Theta_{m}(b_{1})) & = 
(X_{2}^{(m)})^{-1}\Psi_{m}(\Theta_{m}(b_{1}))   = 
(X_{2}^{(m)})^{-1}\Sigma_{m}(b_{1}) \\ 
& = (X_{2}^{(m)})^{-1} \Sigma_{m}(b_{2})
  = u_{\mu}^{\otimes m};
\end{align*}
in particular, $\Psi_{m}((X_{2}^{(m)})^{-1}\Theta_{m}(b_{1})) \ne 0$, and hence 
$(X_{2}^{(m)})^{-1}\Theta_{m}(b_{1}) \ne 0$. 
By using  Remark~\ref{rem:n2}\,(2) repeatedly, 
we see that $X_{2}^{-1}b_{1} \ne \bzero$. 
Since the weight of $(X_{2}^{(m)})^{-1}\Theta_{m}(b_{1})=\Theta_{m}(X_{2}^{-1}b_{1})$ 
is equal to $m\mu$ by the computation above, 
it follows from \eqref{eq:n2a} that the weight of $X_{2}^{-1}b_{1}$ is equal to $\mu$. 
By assumption, we get $X_{2}^{-1}b_{1} = u_{\mu}$, 
and hence $b_{1}=X_{2}u_{\mu}=b_{2}$, as desired. 
\end{proof}
%
%
\subsection{Proof of Theorem~\ref{thm:isom2}.}
\label{subsec:prf4}

As mentioned at the beginning of this section,
the proof of Theorem~\ref{thm:isom2} is similar to those of 
\cite[Theorem 4.1]{o13} and \cite[Theorem 5.1]{o7}. 
So, we give only a sketch of the proof. 
The following two lemmas can be shown in the same manner as 
\cite[Proposition 3.12]{o7} (see also \cite[page 181]{o13} and 
\cite[Proposition 8.3.2 (3)]{KaF}). 

\begin{lem}[{cf. \cite[Proposition 4.4]{o7}}] \label{lem:ext1}
Let $\mu \in P$ be an arbitrary integral weight.
Let $\pi \in \BB_0(\mu)$, and write it as: 
$\pi=\ti{x}_{i_{p}} \cdots \ti{x}_{i_{1}}\pi_{\mu}$, 
where $\ti{x}_{i_q}$ is either $\ti{e}_{i_q}$ or $\ti{f}_{i_q}$ for each $1 \le q \le p$. 
Set $\pi_{q}:=\ti{x}_{i_q} \cdots \ti{x}_{i_1}\pi_{\mu}$
for $0 \le q \le p$. There exists $m \in \BZ_{\ge 1}$ 
(having sufficiently many divisors) such that 
the elements $\Sigma_{m}(\pi_{q}) \in \BB_0(\mu)^{\otimes m}$, 
$0 \le q \le p$, are of the forms: 
\begin{equation}
\Sigma_{m}(\pi_{q}) = 
S_{w_{q,1}}\pi_{\mu} \otimes 
S_{w_{q,2}}\pi_{\mu} \otimes \cdots \otimes 
S_{w_{q,m}}\pi_{\mu}
\end{equation}
for some $w_{q,1},\,w_{q,2},\,\dots,\,w_{q,m} \in W$.
\end{lem}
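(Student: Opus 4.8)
The plan is to use the explicit realization of $\Sigma_{m}$ as the composite $\BB(\mu) \xrightarrow{\Theta_{m}} \BB(m\mu) \subset \BB(\mu)^{\ast m} \xrightarrow{\sim} \BB(\mu)^{\otimes m}$ from the proof of Proposition~\ref{prop:s1}, and to choose $m$ divisible enough that rescaling by $\Theta_{m}$ turns each of the $m$ concatenation pieces of $m\pi_{q}$ into a single straight-line segment. Each $\pi_{q}$ lies in $\BB_{0}(\mu) \subset \BB(\mu)$, hence is an LS path of shape $\mu$, say $\pi_{q}=(\nu^{(q)}_{1} > \cdots > \nu^{(q)}_{s_{q}}\,;\,0=\sigma^{(q)}_{0} < \cdots < \sigma^{(q)}_{s_{q}}=1)$, whose directions $\nu^{(q)}_{u}$ all lie in $W\mu$ and whose break points $\sigma^{(q)}_{u}$ are all rational. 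Since there are only finitely many indices $q$ with $0 \le q \le p$, and each $\pi_{q}$ has finitely many break points, I can pick a single $m \in \BZ_{\ge 1}$ divisible by every denominator of every $\sigma^{(q)}_{u}$; this is precisely the content of the phrase ``having sufficiently many divisors,'' and $m$ may be enlarged by any further factor without harm.

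For such an $m$, every break point of $\pi_{q}$ is a multiple of $1/m$, so $\pi_{q}$, and hence $\Theta_{m}(\pi_{q})=m\pi_{q}$, is linear on each subinterval $[\tfrac{k-1}{m},\tfrac{k}{m}]$, $1 \le k \le m$. Decomposing $m\pi_{q}=(m\pi_{q})_{1} \ast \cdots \ast (m\pi_{q})_{m}$ by the formula recalled just before Example~\ref{20171126(5)}, I find that the $k$-th piece is
\[
(m\pi_{q})_{k}(t) = m\pi_{q}\!\left(\tfrac{t}{m}+\tfrac{k-1}{m}\right) - m\pi_{q}\!\left(\tfrac{k-1}{m}\right) = t\,\nu_{q,k},
\]
where $\nu_{q,k} \in W\mu$ is the constant direction of $\pi_{q}$ on $[\tfrac{k-1}{m},\tfrac{k}{m}]$; thus $(m\pi_{q})_{k}$ is exactly the straight-line path $\pi_{\nu_{q,k}}$ of Remark~\ref{rem:str}. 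Transporting along the isomorphism $\BB(\mu)^{\ast m} \cong \BB(\mu)^{\otimes m}$ of Proposition~\ref{prop:cat} gives $\Sigma_{m}(\pi_{q}) = \pi_{\nu_{q,1}} \otimes \cdots \otimes \pi_{\nu_{q,m}}$. Writing each direction as $\nu_{q,k}=w_{q,k}\mu$ with $w_{q,k} \in W$ and applying Lemma~\ref{20141124(5)}, which yields $\pi_{w_{q,k}\mu}=S_{w_{q,k}}\pi_{\mu}$, I obtain $\Sigma_{m}(\pi_{q}) = S_{w_{q,1}}\pi_{\mu} \otimes \cdots \otimes S_{w_{q,m}}\pi_{\mu}$, each factor lying in $\BB_{0}(\mu)$ as required.

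The only delicate point --- and the closest thing to an obstacle --- is that a single $m$ must work simultaneously for all the intermediate paths $\pi_{0},\pi_{1},\dots,\pi_{p}$; this is why I collect their break points first and only then take the common denominator $m$. To justify that $m$ may be assumed to have arbitrarily many divisors, I would add that, by the commutativity of \eqref{eq:mn1}, enlarging $m$ to any multiple $mn$ sends each straight-line factor $\pi_{\nu_{q,k}}$ to $\Sigma_{n}(\pi_{\nu_{q,k}}) = \pi_{\nu_{q,k}}^{\otimes n}$, since a straight line stays straight under rescaling, so the required form is preserved. The remaining verifications are the routine matching of the concatenation normalization, for which Example~\ref{20171126(5)} already provides the template.
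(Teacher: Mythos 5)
Your proof is correct and is essentially the argument the paper has in mind: the paper omits the proof of Lemma~\ref{lem:ext1}, deferring to \cite[Proposition~4.4]{o7}, whose proof is exactly your common-denominator computation (choose $m$ clearing all denominators of the break points of all the $\pi_{q}$, so that each concatenation piece of $m\pi_{q}$ is a straight line $\pi_{\nu}$ with $\nu \in W\mu$, i.e.\ equals $S_{w}\pi_{\mu}$ by Lemma~\ref{20141124(5)}). The only point worth making explicit is that each intermediate $\pi_{q}$ is a genuine LS path (none is $\bzero$, since $\pi \ne \bzero$), so it really has finitely many rational break points; with that observation your argument is complete.
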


\begin{lem}[{cf. \cite[Proposition 3.2]{o7}}] \label{lem:ext2}
Let $\mu \in P$ be an arbitrary integral weight.
Let $b \in \CB_0(\mu)$, and write it as: 
$b=\ti{x}_{i_{p}} \cdots \ti{x}_{i_{1}}u_{\mu}$, 
where $\ti{x}_{i_q}$ is either $\ti{e}_{i_q}$ or $\ti{f}_{i_q}$ for each $1 \le q \le p$. 
Set $b_{q}:=\ti{x}_{i_{q}} \cdots \ti{x}_{i_{1}}u_{\mu}$
for $0 \le q \le p$. There exists $m \in \BZ_{\ge 1}$ 
(having sufficiently many divisors) such that 
the elements $\Sigma_{m}(b_{q}) \in \CB_0(\mu)^{\otimes m}$, 
$0 \le q \le p$, are of the forms: 
\begin{equation}
\Sigma_{m}(b_{q}) = 
S_{z_{q,1}}u_{\mu} \otimes 
S_{z_{q,2}}u_{\mu} \otimes \cdots \otimes 
S_{z_{q,m}}u_{\mu}
\end{equation}
for some $z_{q,1},\,z_{q,2},\,\dots,\,z_{q,m} \in W$.
\end{lem}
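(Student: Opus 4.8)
The plan is to reduce the statement to a single combinatorial fact about applying a power of one Kashiwara operator to a tensor product of Weyl-group translates of $u_{\mu}$, and then to run an induction along the word $\ti{x}_{i_{p}} \cdots \ti{x}_{i_{1}}$. Throughout I will use that every $S_{w}u_{\mu}$ ($w \in W$) is an extremal element of $\CB_{0}(\mu)$, so it sits at one end of each of its $i$-strings: writing $h := \pair{\wt(S_{w}u_{\mu})}{\alpha_{i}^{\vee}}$, we have $\ve_{i}(S_{w}u_{\mu}) = 0$ and $\vp_{i}(S_{w}u_{\mu}) = h$ when $h \ge 0$, and $\vp_{i}(S_{w}u_{\mu}) = 0$, $\ve_{i}(S_{w}u_{\mu}) = -h$ when $h \le 0$.

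First I would record the compatibility $\Sigma_{n}(S_{w}u_{\mu}) = (S_{w}u_{\mu})^{\otimes n}$ for all $w \in W$ and $n \in \BZ_{\ge 1}$, proved by induction on the number of simple reflections in $w$. The base case is $\Sigma_{n}(u_{\mu}) = u_{\mu}^{\otimes n}$ from \eqref{eq:n2}; for the inductive step one writes $S_{r_{i}w}u_{\mu} = \ti{f}_{i}^{h}(S_{w}u_{\mu})$ (or $\ti{e}_{i}^{-h}(S_{w}u_{\mu})$), applies the intertwining relations in \eqref{eq:n2}, and checks by the tensor product rule that $\ti{f}_{i}^{nh}(c^{\otimes n}) = (S_{i}c)^{\otimes n}$ whenever $c$ is extremal with $\vp_{i}(c) = h$ (the operator fills the leftmost factor all the way to the bottom of its $i$-string before moving on). Together with the commuting diagram \eqref{eq:mn2}, this yields the monotonicity I need: if $\Sigma_{m}(b)$ is a tensor product of elements of the form $S_{z}u_{\mu}$, then so is $\Sigma_{mn}(b) = \Sigma_{n}^{\otimes m}(\Sigma_{m}(b))$ for every $n$. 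This monotonicity is exactly what lets a single sufficiently divisible $m$ serve all of $b_{0}, \dots, b_{p}$ at once, and it explains the phrase ``having sufficiently many divisors''.

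I would then argue by induction on $q$, maintaining that the chosen $m$ works simultaneously for $b_{0}, \dots, b_{q}$. The case $q = 0$ is $\Sigma_{m}(u_{\mu}) = u_{\mu}^{\otimes m}$. For the inductive step, suppose $\Sigma_{m'}(b_{q-1}) = \bigotimes_{k=1}^{m'} c_{k}$ with $c_{k} = S_{z_{k}}u_{\mu}$. Replacing $m'$ by a multiple $m = m'n$ and using the key fact, I get $\Sigma_{m}(b_{q-1}) = \bigotimes_{k=1}^{m'} c_{k}^{\otimes n}$, and hence by \eqref{eq:n2}, $\Sigma_{m}(b_{q}) = \ti{x}_{i_{q}}^{m}\bigl(\bigotimes_{k} c_{k}^{\otimes n}\bigr)$. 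The crux, and what I expect to be the main obstacle, is to show that for $n$ divisible by $D := \operatorname{lcm}\{\,|\pair{\wt(c_{k})}{\alpha_{i_{q}}^{\vee}}| : \pair{\wt(c_{k})}{\alpha_{i_{q}}^{\vee}} \ne 0\,\}$, applying $\ti{f}_{i_{q}}^{m}$ (the $\ti{e}_{i_{q}}$ case being symmetric) flips entire copies, turning each $c_{k}$ either into $S_{i_{q}}c_{k}$ or leaving it unchanged, so that $\Sigma_{m}(b_{q})$ is again a tensor product of elements $S_{z}u_{\mu}$. Monotonicity then guarantees this same $m$ still works for $b_{0}, \dots, b_{q-1}$, closing the induction.

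To establish this combinatorial core I would analyze the signature governing the tensor product rule. Since each factor is extremal, every block $c_{k}^{\otimes n}$ contributes a pure run of $+$'s (of length $h_{k}n$) or of $-$'s (of length $|h_{k}|n$), where $h_{k} = \pair{\wt(c_{k})}{\alpha_{i_{q}}^{\vee}}$; in particular every run has length a multiple of $n$. Because $D \mid n$, the cancellations in the signature reduction always remove multiples of $n$ signs, so they respect copy boundaries and every surviving run again has length a multiple of $n$. As the number of operators applied is $m = m'n$, also a multiple of $n$, processing the surviving $+$'s from the left stops exactly at a copy boundary; thus each copy that is touched is completely lowered to the extremal element $S_{i_{q}}c_{k}$, and no copy is left in the interior of its $i_{q}$-string. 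The delicate points for the full write-up are the bookkeeping of the iterated signature cancellation (showing it preserves the property that all run lengths are multiples of $n$) and the verification that the stopping index $m'n$ lands on a copy boundary; both reduce to the divisibility $D \mid n$ and to the fact that each surviving block contributes a multiple of $n$ plus-signs.
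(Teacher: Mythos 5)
Your proposal is correct and is essentially the argument the paper intends: the paper gives no written proof of Lemma~\ref{lem:ext2} but defers to \cite[Proposition 3.12]{o7}, \cite[page 181]{o13}, and \cite[Proposition 8.3.2 (3)]{KaF}, and those proofs run exactly as you describe --- reduce via $\Sigma_{mn}=\Sigma_n^{\otimes m}\circ\Sigma_m$ and $\Sigma_n(S_w u_\mu)=(S_w u_\mu)^{\otimes n}$ to a signature computation for $\ti{x}_{i_q}^{m'n}$ on a tensor product of extremal factors, with $n$ divisible by the relevant $|\pair{\wt(c_k)}{\alpha_{i_q}^{\vee}}|$ so that whole copies are flipped and the process stops at a copy boundary. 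The only point to be careful about in the write-up (and which you flag) is that the signature must be tracked dynamically as copies are flipped, not just in its initial reduced form; this is handled in the cited sources and causes no gap.
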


Recall from Lemma~\ref{20141124(5)} and 
the comment after \eqref{eq30} that 
$\pi_{\mu} \in \BB_0(\mu)$ and $u_{\mu} \in \CB_0(\mu)$ 
are extremal elements of weight $\mu$.
By the definition of an extremal element, 
we see (cf. \cite[Lemma~4.5]{o7}) that for every $w \in W$ and $i \in I$, 
\begin{equation} \label{eq:ten}
\begin{split}
& \ve_{i}(S_{w}\pi_{\mu}) = 
  \ve_{i}(S_{w}u_{\mu}), \quad 
  \vp_{i}(S_{w}\pi_{\mu}) = 
  \vp_{i}(S_{w}u_{\mu}), \\
& \pair{\wt(S_{w}\pi_{\mu})}{\alpha_{i}^{\vee}} = 
  \pair{\wt(S_{w}u_{\mu})}{\alpha_{i}^{\vee}}. 
\end{split}
\end{equation}

\begin{lem}[{cf. \cite[Lemma 4.6]{o7}}] \label{lem:ext3} \mbox{}
\begin{enu}
\item Keep the notation and setting in Lemma~\ref{lem:ext1}. 
Then, for every $0 \le q \le p$, we have
$b_{q}:=\ti{x}_{i_{q}} \cdots \ti{x}_{i_{1}}u_{\mu} \ne \bzero$, and 
\begin{equation*}
\Sigma_{m}(b_{q}) = 
S_{w_{q,1}}u_{\mu} \otimes 
S_{w_{q,2}}u_{\mu} \otimes \cdots \otimes 
S_{w_{q,m}}u_{\mu}. 
\end{equation*}

\item Keep the notation and setting in Lemma \ref{lem:ext2}. 
Then, for every $0 \le q \le p$, we have
$\pi_{q}:=\ti{x}_{i_{q}} \cdots \ti{x}_{i_{1}}\pi_{\mu} \ne \bzero$, and 
\begin{equation*}
\Sigma_{m}(\pi_{q}) = 
S_{z_{q,1}}\pi_{\mu} \otimes 
S_{z_{q,2}}\pi_{\mu} \otimes \cdots \otimes 
S_{z_{p,m}}\pi_{\mu}. 
\end{equation*}
\end{enu}
\end{lem}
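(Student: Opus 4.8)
The plan is to establish part~(1) by induction on $q$, transporting the explicit form of $\Sigma_{m}(\pi_{q})$ furnished by Lemma~\ref{lem:ext1} over to the element $b_{q}=\ti{x}_{i_{q}}\cdots\ti{x}_{i_{1}}u_{\mu}$ on the $\CB_{0}(\mu)$ side; part~(2) will then follow by the identical argument with the roles of $\pi_{\mu}$ and $u_{\mu}$ interchanged, since both the matching relations \eqref{eq:ten} and the similarity maps (Propositions~\ref{prop:s1} and~\ref{prop:s2}) are symmetric in the two crystals. Concretely, I would prove by induction on $q$ the combined statement that $b_{q}\ne\bzero$ and $\Sigma_{m}(b_{q})=S_{w_{q,1}}u_{\mu}\otimes\cdots\otimes S_{w_{q,m}}u_{\mu}$, with the \emph{very same} Weyl group elements $w_{q,k}$ that appear in $\Sigma_{m}(\pi_{q})=S_{w_{q,1}}\pi_{\mu}\otimes\cdots\otimes S_{w_{q,m}}\pi_{\mu}$. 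The base case $q=0$ is immediate from \eqref{eq:n1} and \eqref{eq:n2}, since $\Sigma_{m}(\pi_{\mu})=\pi_{\mu}^{\otimes m}$ and $\Sigma_{m}(u_{\mu})=u_{\mu}^{\otimes m}$, so one may take $w_{0,k}=e$ for all $k$.

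For the inductive step, write $i:=i_{q}$ and $\ti{x}:=\ti{x}_{i_{q}}$. Using the third relation in \eqref{eq:n2} together with the inductive hypothesis, I would compute $\Sigma_{m}(b_{q})=\ti{x}^{m}\Sigma_{m}(b_{q-1})=\ti{x}^{m}\bigl(S_{w_{q-1,1}}u_{\mu}\otimes\cdots\otimes S_{w_{q-1,m}}u_{\mu}\bigr)$. The essential observation is that, by the tensor product rule, the cumulative action of the batch $\ti{x}^{m}$ on a tensor product factorizes as $\bigotimes_{k}\ti{x}^{n_{k}}(\,\cdot\,)$, where the nonnegative integers $n_{k}$ (with $\sum_{k}n_{k}=m$) are determined solely by the sequence of values $\bigl(\ve_{i},\vp_{i}\bigr)$ of the successive factors. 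By the matching relations \eqref{eq:ten}, these values for the factors $S_{w_{q-1,k}}u_{\mu}$ agree with those for $S_{w_{q-1,k}}\pi_{\mu}$; hence the \emph{same} integers $n_{k}$ govern the action of $\ti{x}^{m}$ on $\Sigma_{m}(\pi_{q-1})$, and Lemma~\ref{lem:ext1} identifies $\ti{x}^{n_{k}}(S_{w_{q-1,k}}\pi_{\mu})=S_{w_{q,k}}\pi_{\mu}$ for each $k$.

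It then remains to identify $\ti{x}^{n_{k}}(S_{w_{q-1,k}}u_{\mu})$ with $S_{w_{q,k}}u_{\mu}$, and this local step is where I expect the main difficulty to lie. The subtlety is that the intermediate elements $\ti{x}^{j}(S_{w_{q-1,k}}\pi_{\mu})$, $0<j<n_{k}$, produced during the batch need not be extremal, so one cannot apply \eqref{eq:ten} to them termwise. The remedy is to use that both endpoints $S_{w_{q-1,k}}\pi_{\mu}$ and $\ti{x}^{n_{k}}(S_{w_{q-1,k}}\pi_{\mu})=S_{w_{q,k}}\pi_{\mu}$ are extremal: a short computation of $\ve_{i}$ and $\vp_{i}$ along the $i$-string through $S_{w_{q-1,k}}\pi_{\mu}$ shows that no strictly intermediate member of the string is extremal, which forces $n_{k}=0$ or $n_{k}=\ve_{i}(S_{w_{q-1,k}}\pi_{\mu})$ (respectively $\vp_{i}$, when $\ti{x}=\ti{f}_{i}$). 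In the former case the factor is unchanged and $w_{q,k}=w_{q-1,k}$; in the latter case $\ti{x}^{n_{k}}=\ti{e}_{i}^{\max}$ acts as $S_{i}$, so $w_{q,k}=r_{i}w_{q-1,k}$. Because \eqref{eq:ten} guarantees that the maximal power on the $u$-side factor is the same integer $n_{k}$, I would conclude $\ti{x}^{n_{k}}(S_{w_{q-1,k}}u_{\mu})=S_{i}(S_{w_{q-1,k}}u_{\mu})=S_{r_{i}w_{q-1,k}}u_{\mu}=S_{w_{q,k}}u_{\mu}$, and in particular that this is nonzero. Assembling the factors yields $\Sigma_{m}(b_{q})=\bigotimes_{k}S_{w_{q,k}}u_{\mu}\ne\bzero$, which forces $b_{q}\ne\bzero$ since $\Sigma_{m}(\bzero)=\bzero$, completing the induction.

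In summary, the argument rests on two facts working in tandem: that the routing $(n_{k})_{k}$ of a batch of Kashiwara operators across a tensor product is dictated by the initial $\ve_{i},\vp_{i}$ data alone (so that \eqref{eq:ten} transfers it verbatim from the $\BB_{0}(\mu)$ side to the $\CB_{0}(\mu)$ side), and that a power of a single Kashiwara operator can carry one extremal element to another only when that power is $0$ or maximal. The second of these is the genuinely delicate point; everything else is bookkeeping that mirrors the proof of \cite[Lemma~4.6]{o7}.
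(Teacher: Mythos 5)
Your proof is correct and takes essentially the same route as the paper: the paper's own proof is only a two-line sketch deferring to \cite[Lemma~4.6]{o7}, namely an induction on $q$ using the fact that the tensor product rule is controlled by the quantities matched in \eqref{eq:ten}. Your proposal fills in exactly that induction, including the key point (implicit in the cited lemma) that extremality of the endpoints forces each per-factor exponent $n_{k}$ to be $0$ or maximal, so that the batch acts factorwise as the identity or as $S_{i}$.
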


\begin{proof}
The proof is similar to that of \cite[Lemma 4.6]{o7}; 
since the tensor product rule of crystals is controlled by 
those values in \eqref{eq:ten} (see \cite[(4.3) and (4.4)]{o7}), 
we can show the assertions by induction on $0 \le q \le p$. 
\end{proof}

By Lemma \ref{lem:ext3}, we deduce 
(cf. \cite[(2) in the proof of Theorem 5.1]{o7}) that 
for a monomial $X$ in Kashiwara operators, 
\begin{equation}\label{eq:nonzero}
X\pi_{\mu} \ne \bzero \quad \text{if and only if} \quad Xu_{\mu} \ne \bzero.
\end{equation}

\begin{lem}[{cf. \cite[(1) in the proof of Theorem 5.1]{o7}}]  \label{lem:equal}
Assume that $\CB_0(\mu)_\mu = \{u_\mu\}$ and $\BB_0(\mu)_\mu = \{\pi_\mu\}$.
Let $X_{1},\,X_{2}$ be monomials in Kashiwara operators. Then, 
\begin{equation} \label{eq:equal}
X_{1}\pi_{\mu} = X_{2}\pi_{\mu}
\quad \text{\rm if and only if} \quad 
X_{1}u_{\mu} = X_{2}u_{\mu}. 
\end{equation}
\end{lem}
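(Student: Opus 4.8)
The plan is to transfer each equality in \eqref{eq:equal} from one crystal to the other by pushing it through the similarity maps $\Sigma_m$ of Propositions~\ref{prop:s1} and~\ref{prop:s2} and then reading off tensor factors; the two hypotheses $\CB_0(\mu)_\mu = \{u_\mu\}$ and $\BB_0(\mu)_\mu = \{\pi_\mu\}$ will be consumed one in each direction. First I would dispose of the degenerate case: by \eqref{eq:nonzero}, $X_j\pi_\mu = \bzero$ is equivalent to $X_j u_\mu = \bzero$ for $j = 1, 2$, so if the common value of either side of \eqref{eq:equal} is $\bzero$ the other side holds automatically. Thus I may assume all four elements $X_1\pi_\mu, X_2\pi_\mu, X_1 u_\mu, X_2 u_\mu$ are nonzero.

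The key device is a uniform bridge statement, valid in any normal crystal $\CB$: if $c \in \CB$ is extremal of weight $\mu$ and the only weight-$\mu$ element in the connected component $\CB_0$ of $c$ is $c$ itself, then for $w, w' \in W$ one has $S_w c = S_{w'}c$ if and only if $w\mu = w'\mu$. The ``only if'' is immediate from $\wt(S_w c) = w\mu$. For the ``if'', $w\mu = w'\mu$ gives $\wt(S_{w^{-1}}S_{w'}c) = w^{-1}w'\mu = \mu$, and $S_{w^{-1}}S_{w'}c = S_{w^{-1}w'}c$ lies in $\CB_0$ because it is obtained from $c$ by Kashiwara operators; hence it equals $c$, and applying $S_w$ (recall $S$ is a genuine $W$-action) yields $S_{w'}c = S_w c$. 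Applied with $c = u_\mu$ this invokes $\CB_0(\mu)_\mu = \{u_\mu\}$, and with $c = \pi_\mu$ it invokes $\BB_0(\mu)_\mu = \{\pi_\mu\}$; these are precisely the two points at which the hypotheses enter.

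For the forward implication of \eqref{eq:equal}, suppose $X_1\pi_\mu = X_2\pi_\mu$. Writing $X_1, X_2$ as explicit monomials and using the compatibility diagrams \eqref{eq:mn1}, \eqref{eq:mn2} to pass to a common, sufficiently divisible $m$, Lemma~\ref{lem:ext1} together with Lemma~\ref{lem:ext3}(1) produces, for $j = 1, 2$, Weyl-group elements $w_{j,1}, \dots, w_{j,m}$ with
\[
\Sigma_m(X_j\pi_\mu) = S_{w_{j,1}}\pi_\mu \otimes \cdots \otimes S_{w_{j,m}}\pi_\mu,
\qquad
\Sigma_m(X_j u_\mu) = S_{w_{j,1}}u_\mu \otimes \cdots \otimes S_{w_{j,m}}u_\mu,
\]
and in particular $X_j u_\mu \ne \bzero$. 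Since $\Sigma_m(X_1\pi_\mu) = \Sigma_m(X_2\pi_\mu)$ is a single element, its tensor factors agree, so $S_{w_{1,k}}\pi_\mu = S_{w_{2,k}}\pi_\mu$ and hence $w_{1,k}\mu = w_{2,k}\mu$ for every $k$ (the ``only if'' of the bridge). The ``if'' of the bridge on the $u_\mu$-side, which uses $\CB_0(\mu)_\mu = \{u_\mu\}$, then gives $S_{w_{1,k}}u_\mu = S_{w_{2,k}}u_\mu$ for all $k$, i.e.\ $\Sigma_m(X_1 u_\mu) = \Sigma_m(X_2 u_\mu)$; injectivity of $\Sigma_m$ on $\CB_0(\mu)$ (Corollary~\ref{20171126(6)}, again using $\CB_0(\mu)_\mu = \{u_\mu\}$) yields $X_1 u_\mu = X_2 u_\mu$. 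The converse implication is completely symmetric, with Lemmas~\ref{lem:ext2} and~\ref{lem:ext3}(2) in place of Lemmas~\ref{lem:ext1} and~\ref{lem:ext3}(1), the ``if'' of the bridge applied on the $\pi_\mu$-side (using $\BB_0(\mu)_\mu = \{\pi_\mu\}$), and injectivity of $\Sigma_m$ on $\BB_0(\mu)$ supplied by Proposition~\ref{prop:s1}.

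The part I expect to be delicate is not this formal transfer but the coupling it rests on, namely Lemma~\ref{lem:ext3}: that after passing to a sufficiently divisible $m$ the \emph{same} tuple $(w_{j,1}, \dots, w_{j,m})$ simultaneously records the tensor factorizations of $\Sigma_m(X_j\pi_\mu)$ and of $\Sigma_m(X_j u_\mu)$. This is exactly where the extremality identities \eqref{eq:ten} --- the equality of $\ve_i, \vp_i$ and of the weight pairings for $S_w\pi_\mu$ and $S_w u_\mu$ --- are indispensable, since they make the two tensor products evolve in lockstep under the Kashiwara operators; once this coupling is granted, the remainder is the bookkeeping above.
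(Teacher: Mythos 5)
Your argument is correct, but it takes a noticeably heavier route than the paper's. The paper proves the ``only if'' direction in four lines: from $X_{1}\pi_{\mu} = X_{2}\pi_{\mu} \ne \bzero$ it forms $X_{2}^{-1}X_{1}\pi_{\mu}=\pi_{\mu}$, transfers nonvanishing to the other crystal by \eqref{eq:nonzero} to get $X_{2}^{-1}X_{1}u_{\mu} \ne \bzero$, notes that this element has weight $\mu$ and hence equals $u_{\mu}$ by the hypothesis $\CB_0(\mu)_\mu = \{u_\mu\}$, and applies $X_{2}$ to recover $X_{1}u_{\mu}=X_{2}u_{\mu}$; the ``if'' direction is symmetric. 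So the only imported fact is \eqref{eq:nonzero} (plus the inverse-monomial formalism), and the hypothesis enters exactly as in your argument, to pin down a weight-$\mu$ element of the connected component. Your proof instead re-enters the similarity machinery at full strength: you redo the tensor decompositions of Lemmas~\ref{lem:ext1}--\ref{lem:ext3}, match extremal tensor factors, pass through the Weyl-orbit ``bridge'' (which is the same argument as Theorem~\ref{n10}\,(1)), and finish with the injectivity of $\Sigma_{m}$ (Corollary~\ref{20171126(6)} on the $\CB$-side, Proposition~\ref{prop:s1} on the $\BB$-side). This is all valid --- the one point you gloss over, arranging a single sufficiently divisible $m$ that works for both $X_{1}$ and $X_{2}$ simultaneously, is covered by the ``sufficiently many divisors'' clause in Lemmas~\ref{lem:ext1} and \ref{lem:ext2} together with the diagrams \eqref{eq:mn1} and \eqref{eq:mn2} --- but it buys nothing over the short argument, since \eqref{eq:nonzero} already encapsulates everything you need from the coupling established in Lemma~\ref{lem:ext3}.
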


\begin{proof}
We give a proof only for the ``only if" part; the proof of the ``if" part is similar.
If $X_{1}\pi_{\mu} = X_{2}\pi_{\mu} = \bzero$, 
then the assertion follows immediately from \eqref{eq:nonzero}. 
Assume that $X_{1}\pi_{\mu} = X_{2}\pi_{\mu} \ne \bzero$. 
Then we have $X_{2}^{-1}X_{1}\pi_{\mu}=\pi_{\mu}$ 
(for the definition of $X_{2}^{-1}$, 
see the paragraph preceding Lemma~\ref{lem:n3}). 
Since $X_{2}^{-1}X_{1}\pi_{\mu} \ne \bzero$, 
it follows from \eqref{eq:nonzero} that 
$X_{2}^{-1}X_{1}u_{\mu} \ne \bzero$. 
Since the weight of $X_{2}^{-1}X_{1}u_{\mu}$ is equal to $\mu$, 
we see by the assumption $\CB_0(\mu)_\mu = \{u_\mu\}$ that $X_{2}^{-1}X_{1}u_{\mu}=u_{\mu}$, 
and hence $X_{1}u_{\mu}=X_{2}u_{\mu}$. 
Thus we have proved the lemma. 
\end{proof}

By \eqref{eq:nonzero} and \eqref{eq:equal}, 
we can define a bijection from $\CB_0(\mu)$ onto $\BB_0(\mu)$ by:
$Xu_{\mu} \mapsto X\pi_{\mu}$ for a monomial $X$ in Kashiwara operators; 
it is obvious that this map is an isomorphism of crystals. 
Thus we have proved Theorem \ref{thm:isom2}.

%
{\small
 \setlength{\baselineskip}{10pt}

}

\end{document}